\newtheorem{theorem}{Theorem}[section]
\newtheorem{remark}[theorem]{Remark}
\newtheorem{lemma}[theorem]{Lemma}
\newtheorem{proposition}[theorem]{Proposition}
\def\PP{\rm \hbox{I\kern-.2em\hbox{P}}}
\def\HH{\rm \hbox{I\kern-.2em\hbox{H}}}
\def\RR{\rm \hbox{I\kern-.2em\hbox{R}}}
\def\NN{\rm \hbox{I\kern-.2em\hbox{N}}}
\def\ZZ{\rm {{\rm Z}\kern-.28em{\rm Z}}}
\def\CC{\rm \hbox{C\kern -.5em {\raise .32ex \hbox{$\scriptscriptstyle
|$}}\kern-.22em{\raise .6ex \hbox{$\scriptscriptstyle |$}}\kern .4em}}
\def\vp{\varphi}
\def\<{\langle}
\def\>{\rangle}
\def\t{\tilde}
\def\e{\varepsilon}
\def\sm{\setminus}
\def\nl{\newline}
\def\bq{{\bf q}}
\def\cT{{\cal T}}
\def\cA{{\cal A}}
\def\cM{{\cal M}}
\def\cD{{\cal D}}
\def\cP{{\cal P}}
\def\cO{{\cal O}}
\def\H{{\rm \hbox{I\kern-.2em\hbox{H}}}}
\def\Chi{\raise .3ex
\hbox{\large $\chi$}} \def\vp{\varphi}
\def\lsima{\hbox{\kern -.6em\raisebox{-1ex}{$~\stackrel{\textstyle<}{\sim}~$}}\kern -.4em}
\def\lsim{\hbox{\kern -.2em\raisebox{-1ex}{$~\stackrel{\textstyle<}{\sim}~$}}\kern -.2em}
\def\({\Bigl (}
\def\){\Bigr )}
\def\({\Bigl (}
\def\){\Bigr )}
\def\nl{\newline}
\newcommand{\be}{\begin{equation}}
\newcommand{\ee}{\end{equation}}
\newcommand{\bea}{$$ \begin{array}{lll}}
\newcommand{\eea}{\end{array} $$}
\newcommand{\bi}{\begin{itemize}}
\newcommand{\ei}{\end{itemize}}
\newcommand{\iref}[1]{(\ref{#1})}
\def\proof{{\noindent \bf Proof: }}
\def\ve{\varepsilon}
\def\cO{\mathcal O}
\def\R{\mathbb R}
\def\gsim{\hbox{\kern -.2em\raisebox{-1ex}{$~\stackrel{\textstyle>}{\sim}~$}}\kern -.2em}
\begin{document}

\title{Adaptive and anisotropic piecewise polynomial approximation}
\author{Albert Cohen and Jean-Marie Mirebeau}

%
%
\maketitle


\abstract{We survey the main results of approximation theory for
adaptive piecewise polynomial functions.
In such methods, the partition on which the
piecewise polynomial approximation is defined is
not fixed in advance, but adapted to the given function $f$
which is approximated. We focus our discussion
on (i) the properties that describe an optimal partition 
for $f$, (ii) the smoothness properties of $f$ that govern
the rate of convergence of the approximation in the $L^p$-norms,
and (iii)  fast refinement algorithms that generate near optimal partitions.
While these results constitute a fairly established theory in the univariate
case and in the multivariate case when dealing with elements of isotropic
shape, the approximation theory for adaptive and anisotropic elements
is still building up. We put a particular emphasis on some recent
results obtained in this direction.
}

\section{Introduction}
\label{CMintro}

\subsection{Piecewise polynomial approximation}

Approximation by piecewise polynomial functions
is a procedure that occurs in numerous applications. 
In some of them such as 
terrain data simplification or image compression,
the function $f$ to be approximated might be fully known,
while it might be only partially known or fully
unknown in other applications such as denoising, statistical learning or 
in the finite element discretization of PDE's.
In all these applications, one usually makes the 
distinction between {\it uniform} and {\it adaptive} approximation.
In the uniform case, the domain of interest is decomposed into
a partition where all elements have comparable shape and size,
while these attributes are allowed to vary strongly in the adaptive case.
The partition may therefore be adapted to the local properties
of $f$, with the objective of 
optimizing the trade-off between accuracy and complexity of
the approximation. This chapter is concerned with the following fundamental
questions:
\begin{itemize}
\item
Which mathematical properties describe an optimally adapted 
partition for a given function $f$ ?
\item
For such optimally adapted partitions, what smoothness properties of $f$ govern the 
convergence properties of the corresponding piecewise polynomial approximations ?
\item
Can one construct optimally adapted 
partitions for a given function $f$ by a fast algorithm ?
\end{itemize}
For a given bounded domain $\Omega\subset \RR^d$
and a fixed integer $m>0$, we associate to any partition
$\cT$ of $\Omega$ the space 
$$
V_\cT:=\{f\;\;{\rm s.t.}\;\; f_{|T}\in\PP_{m-1},\; T\in\cT\}
$$
of piecewise polynomial functions of total degree $m-1$ over $\cT$.
The dimension of this space measures the complexity
of a function $g\in V_\cT$. It is proportional to the cardinality
of the partition:
$$
{\rm dim}(V_\cT):=C_{m,d}\#(\cT), \mbox{ with }
C_{m,d}:={\rm dim}(\PP_{m-1})=\binom{m+d-1}{d}.
$$
In order to describe how accurately a given function $f$
may be described by piecewise polynomial functions
of a prescribed complexity, it is therefore 
natural to introduce the error of best approximation
in a given norm $\|\cdot\|_X$ which is defined as 
$$
\sigma_N(f)_X:=\inf_{\#(\cT)\leq N}\min_{g\in V_{\cT}}\|f-g\|_X.
$$
This object of study is too vague if we do not make some
basic assumptions that limitate the set of partitions which 
may be considered. We therefore restrict the definition of
the above infimum to a class $\cA_N$
of ``admissible partitions'' of complexity at most $N$. The
approximation to $f$ is therefore searched in the set 
$$
\Sigma_N:=\cup_{\cT\inÊ\cA_N} V_{\cT},
$$
and the error of best approximation is now defined as
$$
\sigma_N(f)_X:=\inf_{g\in \Sigma_N}\|f-g\|_X=\inf_{\cT\in\cA_N}\inf_{g\in V_\cT}\|f-g\|_{X}.
$$
The assumptions which define the class $\cA_N$ are
usually of the following type:
\begin{enumerate}
\item
The elementary {\it geometry} of the
elements of $\cT$. The typical examples that are
considered in this chapter are: intervals when $d=1$, 
triangles or rectangles when $d=2$, simplices
when $d>2$. \vspace{0.2cm}
\item
Restrictions on the {\it regularity} of the partition, in the sense of
the relative size and shape of the elements that constitute
the partition $\cT$. \vspace{0.2cm}
\item
Restrictions on the {\it conformity} of the partition, which impose
that each face of an element $T$
is common to at most one adjacent element $T'$. 
\end{enumerate}
The conformity restriction is critical when imposing global
continuity or higher smoothness 
properties in the definition of
$V_\cT$, and if one wants to measure the
error in some smooth norm. In this survey, we 
limitate our interest to the approximation error
measured in $X=L^p$. We therefore do not
impose any global smoothness property on the space $V_\cT$
and ignore the conformity requirement. 

Throughout this chapter, we use the notation
$$
e_{m,\cT}(f)_p:=\min_{g\in V_\cT}\|f-g\|_{L^p},
$$
to denote the $L^p$ approximation error in the space $V_\cT$ and
$$
\sigma_N(f)_p:=\sigma_N(f)_{L^p}=\inf_{g\in \Sigma_N}\|f-g\|_{L^p}=
\inf_{\cT\in\cA_N}e_{m,\cT}(f)_p.
$$
If $T\in \cT$ is an element and $f$ is a function
defined on $\Omega$, we denote by
$$
e_{m,T}(f)_p:=\min_{\pi\in \PP_{m-1}} \|f-\pi\|_{L^p(T)},
$$
the local approximation error. We thus have
$$
e_{m,\cT}(f)_p=\(\sum_{T\in\cT}e_{m,T}(f)_p^p\)^{1/p},
$$
when $p<\infty$ and
$$
e_{m,\cT}(f)_\infty=\max_{T\in\cT}e_{m,T}(f)_\infty.
$$
The norm $\|f\|_{L^p}$ without precision on the domain
stands for $\|f\|_{L^p(\Omega)}$ where $\Omega$ is the full
domain where $f$ is defined.

\subsection{From uniform to adaptive approximation}

Concerning the restrictions ont the regularity of the
partitions, three situations should be distinguished:
\begin{enumerate}
\item
{\it Quasi-uniform partitions}: all elements have approximately
the same size. This may be expressed by a restriction of the type
\be
C_1 N^{-1/d} \leq \rho_T \leq h_T \leq C_2 N^{-1/d},
\label{CMunifmesh}
\ee
for all $T\in \cT$ with $\cT \inÊ\cA_N$, where $0<C_1\leq C_2$ are 
constants independent of $N$, and where $h_T$ and $\rho_T$ respectively denote
the diameters of $T$ and of it largest inscribed disc. \vspace{0.2cm}
\item
{\it Adaptive isotropic partitions}: elements may have
arbitrarily different size but their aspect ratio is controlled
by a restriction of the type
\be
\frac {h_T}{\rho_T} \leq C,
\label{isotropy}
\ee
for all $T\in \cT$ with $\cT \inÊ\cA_N$,  where $C>1$ is
independent of $N$. \vspace{0.2cm}
\item
{\it Adaptive anisotropic partitions}: element may
have arbitrarily different size and aspect ratio,
i.e. no restriction is made on $h_T$ and $\rho_T$.
\end{enumerate}
A classical result states that if a function $f$ belongs to the
Sobolev space $W^{m,p}(\Omega)$ the
$L^p$ error of approximation by piecewise polynomial of degree
$m$ on a given partition satisfies the estimate
\be
e_{m,\cT}(f)_p\leq Ch^{m}|f|_{W^{m,p}},
\ee
where $h:=\max_{T\in \cT} h_T$ is the maximal mesh-size, 
$|f|_{W^{m,p}}:=\(\sum_{|\alpha|=m}\|\partial^\alpha f\|_{L^p}^p\)^{1/p}$
is the standard Sobolev semi-norm, and $C$ is a constant that only depends on $(m,d,p)$.
In the case of quasi-uniform partitions, this yields an estimate
in terms of complexity:
\be
\sigma_N(f)_p\leq CN^{-m/d}|f|_{W^{m,p}},
\label{CMsigmaNunif}
\ee
where the constant $C$ now also depends on $C_1$ and $C_2$ in \iref{CMunifmesh}.
\nl
\nl
{\it Here and throughout the chapter, ${\rm C}$ denotes a generic constant which may
vary from one equation to the other. The dependence of this constant with respect to the relevant
parameters will be mentionned when necessary.}
\nl
\nl
Note that the above estimate can be achieved by restricting the family $\cA_N$
to a single partition: for example, we start from a 
coarse partition $\cT_0$ into cubes and recursively define a nested sequence of
partition $\cT_j$ by splitting each cube of $\cT_{j-1}$ into $2^d$ cubes of half side-length.
We then set
$$
\cA_N:=\{\cT_j\}, \mbox{ if }\#(\cT_0)2^{dj}\leq N< \#(\cT_0)2^{d(j+1)}.
$$
Similar uniform refinement rules can be proposed for  more general partitions into triangles,
simplices or rectangles. With such a choice for $\cA_N$, the set $\Sigma_N$
on which one picks the approximation is thus
a standard linear space. Piecewise polynomials
on quasi-uniform partitions may therefore be considered 
as an instance of {\it linear approximation}.

The interest of adaptive partitions is that the choice of $\cT\in \cA_N$
may vary depending on $f$, so that the set $\Sigma_N$
is inherently a nonlinear space. Piecewise polynomials
on adaptive partitions are therefore an instance of {\it nonlinear approximation}.
Other instances include approximation by rational
functions, or by $N$-term linear combinations
of a basis or dictionary. We refer to \cite{CMde} for a general survey
on nonlinear approximation. 

The use of adaptive partitions allows to improve
significantly on \iref{CMsigmaNunif}. The theory
that describes these improvements is rather well established
for adaptive isotropic partitions: as explained
further, a typical result for such partitions is of the form
\be
\sigma_N(f)_p\leq CN^{-m/d}|f|_{W^{m,\tau}},
\label{CMsigmaNiso}
\ee
where $\tau$ can be chosen smaller than $p$.
Such an estimate reveals that the same rate of decay
$N^{-\frac m d}$ as in \iref{CMsigmaNunif} is achieved for $f$ 
in a smoothness space which is larger than $W^{m,p}$. It also says that 
for a smooth function, the multiplicative constant governing this
rate might be substantially smaller than when 
working with quasi-uniform partitions. 

When allowing adaptive anisotropic partitions, one
should expect for further improvements. From an
intuitive point of view, such partitions are needed
when the function $f$ itself displays locally
anisotropic features such as jump discontinuities 
or sharp transitions along smooth manifolds. 
The available approximation theory for such partitions 
is still at its infancy. Here, typical estimates are
also of the form
\be
\sigma_N(f)_p\leq CN^{-m/d}A(f),
\label{CMsigmaNaniso}
\ee
but they involve quantities $A(f)$ which are 
not norms or semi-norms associated with standard
smoothness spaces. These quantities are
highly nonlinear in $f$ in the sense that they
do not satisfy $A(f+g)\leq C(A(f)+A(g))$ even
with $C\geq 1$.

\subsection{Outline}

This chapter is organized as follows. As a starter, we study
in \S 2 the simple case of piecewise constant approximation on
an interval. This example gives a first illustration the difference
between the approximation properties of uniform and adaptive partitions.
It also illustrates the principle of {\it error equidistribution} 
which plays a crucial role in the 
construction of adaptive partitions which are optimally
adapted to $f$. This leads us to propose and
study a {\it multiresolution greedy refinement algorithm} as a design tool for such partitions. 
The distinction between isotropic and anisotropic partitions
is irrelevant in this case, since we work with one-dimensional intervals.

We discuss in \S 3 the derivation of estimates of the
form \iref{CMsigmaNiso} for adaptive isotropic partitions.
The main guiding principle for the design of the partition
is again error equidistribution. Adaptive greedy refinement
algorithms are discussed, similar to the one-dimensional case.

We study in \S 4 an elementary case of 
adaptive anisotropic partitions for which all elements
are two-dimensional rectangles with sides that are parallel
to the $x$ and $y$ axes. This type of anisotropic partitions
suffer from an intrinsic lack of directional selectivity. 
We limitate our attention to piecewise constant functions, and identify the
quantity $A(f)$ involved in \iref{CMsigmaNaniso} for this particular case.
The main guiding principles for the design of the optimal partition
are now error equidistribution combined with a local {\it shape optimization}
of each element. 

In \S 5, we present some recently available theory for
piecewise polynomials on adaptive anisotropic
partitions into triangles (and simplices in dimension $d>2$)
which offer more directional selectivity than the previous
example. We give a general formula for
the quantity $A(f)$ which can be turned into an explicit
expression in terms of the derivatives of $f$
in certain cases such as piecewise linear functions i.e. $m=2$.
Due to the fact that $A(f)$ is not a semi-norm,
the function classes defined by 
the finiteness of $A(f)$
are not standard smoothness spaces.
We show that these classes include piecewise smooth objects
separated by discontinuities or sharp transitions
along smooth edges.

We present in \S 6 several greedy refinement algorithms which
may be used to derive anisotropic partitions. The convergence
analysis of these algorithms is more delicate than for their
isotropic counterpart, yet some first results indicate that
they tend to generate optimally adapted partitions which
satisfy convergence estimates in accordance with \iref{CMsigmaNaniso}.
This behaviour is illustrated by numerical tests on two-dimensional functions.

\section{Piecewise constant one-dimensional approximation}

We consider here the very simple problem of approximating a
continuous function by piecewise constants on the unit interval
$[0,1]$, when we measure the error in the uniform norm.
If $f\in C([0,1])$ and $I\subset [0,1]$ is an arbitrary interval
we have
$$
e_{1,I}(f)_\infty:=\min_{c\in \RR}\|f-c\|_{L^\infty(I)} = \frac 1 2 \max_{x,y\in I}|f(x)-f(y)|.
$$
The constant $c$ that achieves the minimum is the median of $f$ on $I$. Remark that
we multiply this estimate at most by a factor $2$ if we take $c=f(z)$ for any $z\in I$.
In particular, we may choose for $c$ the average of $f$ on $I$ which is still defined
when $f$ is not continuous but simply integrable.

If $\cT_N=\{I_1,\cdots,I_N\}$ is a partition of $[0,1]$ into $N$ sub-intervals
and $V_{\cT_N}$ the corresponding space of piecewise constant functions,
we thus find hat 
\be
e_{1,\cT_N}(f)_\infty:=\min_{g\in V_{\cT_N}} \|f-g\|_{L^\infty}=\frac 1 2\max_{k=1,\cdots,N} \max_{x,y\in I_k}|f(x)-f(y)|.
\label{CMbasicconst}
\ee
\subsection{Uniform partitions}

We first study the error of approximation 
when the $\cT_N$ are uniform partitions
consisting of the intervals $I_k=[\frac k N,\frac {(k+1)} N]$.
Assume first that $f$ is a Lipschitz function i.e. $f'\in L^\infty$.
We then have
$$
\max_{x,y\in I_k}|f(x)-f(y)|\leq |I_k| \|f'\|_{L^\infty(I_k)}=N^{-1} \|f'\|_{L^\infty}.
$$
Combining this estimate with \iref{CMbasicconst}, we find that for uniform partitions,
\be
f\in {\rm Lip}([0,1]) \Rightarrow \sigma_N(f)_\infty \leq  CN^{-1},
\label{CMlipunif}
\ee
with $C=\frac 1 2\|f'\|_{L^\infty}$. For less smooth functions, we may obtain
lower convergence rates: if $f$ is 
H\"older continuous of exponent $0<\alpha<1$, we have by definition
$$
|f(x)-f(y)|\leq |f|_{C^\alpha} |x-y|^\alpha,
$$
which yields
$$
\max_{x,y\in I_k}|f(x)-f(y)|\leq N^{-\alpha}|f|_{C^\alpha}.
$$
We thus find that
\be
f\in C^\alpha([0,1])  \Rightarrow \sigma_N(f)_\infty \leq  CN^{-\alpha},
\label{CMholdunif}
\ee
with $C=\frac 1 2|f|_{C^\alpha}$.

The estimates \iref{CMlipunif} and \iref{CMholdunif} are sharp in the 
sense that they admit a converse: it is easily checked that if $f$ is a continuous
function such that  $\sigma_N(f)_\infty \leq  CN^{-1}$ for some $C>0$, it is
necessarily Lipschitz. Indeed, for any $x$ and $y$ in $[0,1]$, consider
an integer $N$ such that $\frac 1 2 N^{-1} \leq |x-y| \leq N^{-1}$. For 
such an integer, there exists a $f_N\in V_{\cT_N}$ such that
$\|f-f_N\|_{L^\infty}\leq CN^{-1}$. We thus have
$$
|f(x)-f(y)| \leq 2CN^{-1}+|f_N(x)-f_N(y)|.
$$
Since $x$ and $y$ are either contained in one interval or two
adjacent intervals of the partition $\cT_N$ and since $f$ is continuous, we find
that $|f_N(x)-f_N(y)|$ is either zero or less than $2CN^{-1}$. 
We therefore have
$$
|f(x)-f(y)| \leq 4CN^{-1}\leq 8C|x-y|,
$$
which shows that $f\in {\rm Lip}([0,1])$. In summary, we have the following result.
\begin{theorem}
If $f$ is a continuous function defined on $[0,1]$ and if $\sigma_N(f)_\infty$
denotes the $L^\infty$ error of piecewise constant approximation on 
uniform partitions, we have
\be
f\in {\rm Lip}([0,1]) \Leftrightarrow \sigma_N(f)_\infty \leq  CN^{-1}.
\label{CMlipunifeq}
\ee
\end{theorem}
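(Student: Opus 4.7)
The plan is to prove the equivalence by handling both implications, both of which are essentially indicated in the text preceding the theorem. I will simply organize them more carefully.

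For the forward implication, I would start from the basic identity \iref{CMbasicconst} giving $e_{1,\cT_N}(f)_\infty = \frac{1}{2}\max_k \max_{x,y\in I_k}|f(x)-f(y)|$. When $f$ is Lipschitz and the $I_k$ are the uniform intervals of length $1/N$, each oscillation $\max_{x,y\in I_k}|f(x)-f(y)|$ is bounded by $|I_k|\,\|f'\|_{L^\infty} = N^{-1}\|f'\|_{L^\infty}$, and taking the max over $k$ and dividing by $2$ gives $\sigma_N(f)_\infty \leq \frac{1}{2}\|f'\|_{L^\infty}N^{-1}$.

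For the converse, given continuous $f$ with $\sigma_N(f)_\infty \leq C N^{-1}$ and arbitrary $x,y \in [0,1]$, I would first choose $N$ so that $\frac{1}{2N} \leq |x-y| \leq \frac{1}{N}$, guaranteeing that $x$ and $y$ either lie in the same interval of the uniform partition $\cT_N$ or in two adjacent ones. Pick an approximant $f_N \in V_{\cT_N}$ realizing $\|f-f_N\|_{L^\infty}\leq C N^{-1}$ (up to doubling the constant if only an infimum is attained). By the triangle inequality,
\begin{equation*}
|f(x)-f(y)| \leq |f(x)-f_N(x)| + |f_N(x)-f_N(y)| + |f_N(y)-f(y)| \leq 2CN^{-1} + |f_N(x)-f_N(y)|.
\end{equation*}
Then I would argue that $|f_N(x)-f_N(y)| \leq 2CN^{-1}$: if $x,y$ are in the same interval, this quantity is zero; if they are in two adjacent intervals $I_k, I_{k+1}$, continuity of $f$ at the common endpoint $z$ combined with $\|f-f_N\|_{L^\infty}\leq CN^{-1}$ (applied on each side by taking limits from within each interval and using that $f(z)$ is a common value) forces the jump of $f_N$ across $z$ to be at most $2CN^{-1}$. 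Combining gives $|f(x)-f(y)| \leq 4C N^{-1} \leq 8 C |x-y|$, proving $f \in \mathrm{Lip}([0,1])$.

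The only subtle point, and the one I would write out most carefully, is the bound on the jump $|f_N(x)-f_N(y)|$ when $x$ and $y$ lie in adjacent intervals. It relies crucially on the continuity of $f$: the two constant values taken by $f_N$ on $I_k$ and $I_{k+1}$ must both be within $CN^{-1}$ of the single value $f(z)$ at the shared endpoint (by taking one-sided limits of $|f-f_N|$), which caps the jump. None of the steps involve anything beyond the triangle inequality and continuity, so the proof should be short.
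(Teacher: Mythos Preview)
Your proposal is correct and follows essentially the same argument as the paper: both directions are proved exactly as you describe, with the same choice of $N$ satisfying $\frac{1}{2}N^{-1}\leq |x-y|\leq N^{-1}$, the same triangle inequality splitting, and the same bound $|f_N(x)-f_N(y)|\leq 2CN^{-1}$ via continuity of $f$ at the shared endpoint of adjacent intervals. You have in fact spelled out the jump estimate more carefully than the paper does.
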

In an exactly similar way, is can be proved that
\be
f\in C^\alpha([0,1])  \Leftrightarrow \sigma_N(f)_\infty \leq  CN^{-\alpha},
\label{CMholdunifeq}
\ee
These equivalences reveal that Lipschitz and Holder smoothness
are the properties that do govern the rate of approximation
by piecewise constant functions in the uniform norm.

The estimate \iref{CMlipunif} is also optimal in the
sense that it describes the {\it saturation rate} of
piecewise constant approximation: 
a higher convergence rate cannot be obtained, even
for smoother functions, and the constant
$C=\frac 1 2\|f'\|_{L^\infty}$ cannot be improved. In order to see this, consider
an arbitrary function $f\in C^1([0,1])$, so that for all 
$\e>0$, there exists $\eta>0$ such that
$$
|x-y|\leq \eta \Rightarrow |f'(x)-f'(y)|\leq \e.
$$
Therefore if $N$ is such that $N^{-1}\leq \eta$, we can introduce
on each interval $I_k$ an affine function
$p_k(x)=f(x_k)+(x-x_k)f'(x_k)$ where $x_k$ is an arbitrary point
in $I_k$, and we then have
$$
\|f-p_k\|_{L^\infty(I_k)}\leq N^{-1}\e.
$$
It follows that
$$
\begin{array}{ll}
e_{1,I_k}(f)_\infty &\geq e_{1,I_k}(p_k)_\infty-e_{1,I_k}(f-p_k)_\infty\\
& \geq e_{1,I_k}(p_k)_\infty-\frac 1 2 N^{-1} \e \\
& =\frac 1 2 N^{-1}(|f'(x_k)|-\e),
\end{array}
$$
where we have used the triangle inequality
\be
e_{m,T}(f+g)_p \leq e_{m,T}(f)_p+e_{m,T}(g)_p,
\label{trierror}
\ee
Choosing for $x_k$ the point that maximize $|f'|$ on $I_k$ and taking the supremum 
of the above estimate over all $k$, we obtain
$$
e_{1,\cT_N}(f)_\infty \geq \frac 1 2 N^{-1}(\|f'\|_{L^\infty}-\e).
$$
Since $\e>0$ is arbitrary, this implies the lower estimate
\be
\liminf_{N\to +\infty} \;N\sigma_N(f)_{\infty} \geq \frac 1 2  \|f'\|_{L^\infty}.
\label{CMsaturationpwconstunif}
\ee
Combining with the upper estimate \iref{CMlipunif}, we thus obtain the equality
\be
\lim_{N\to +\infty} N\sigma_N(f)_{\infty} = \frac 1 2  \|f'\|_{L^\infty},
\ee
for any function $f\in C^1$. This identity shows that for smooth enough
functions, the numerical quantity that governs the rate of convergence $N^{-1}$
of uniform piecewise constant approximations is exactly $\frac 1 2  \|f'\|_{L^\infty}$.

\subsection{Adaptive partitions}

We now consider an adaptive partition $\cT_N$ for which the
intervals $I_k$ may depend on $f$. In order to understand the 
gain in comparison to uniform partitions, let us consider a 
function $f$ such that $f'\in L^1$, i.e. 
$f\in W^{1,1}([0,1])$. Remarking that
$$
\max_{x,y\in I}|f(x)-f(y)| \leq \int_I |f'(t)|dt,
$$
we see that a natural choice fo the $I_k$ can be done by 
imposing that
$$
\int_{I_k} |f'(t)|dt= N^{-1}\int_{0}^1 |f'(t)|dt,
$$
which means that the $L^1$ norm of $f'$ is equidistributed over all intervals. 
Combining this estimate with \iref{CMbasicconst}, we find that for adaptive partitions,
\be
f\in W^{1,1}([0,1]) \Rightarrow \sigma_N(f)_\infty \leq  CN^{-1},
\label{CMconstadapt}
\ee
with $C:=\frac 1 2 \|f'\|_{L^1}$. 
This improvement
upon uniform partitions in terms of approximation properties 
was firstly established in \cite{CMka}. The above argument
may be extended to the case where $f$ belongs to 
the slightly larger space $BV([0,1])$ which may
include discontinuous functions in contrast to $W^{1,1}([0,1])$, 
by asking that the $I_k$ are such that
$$
|f|_{BV(I_k)}\leq N^{-1}|f|_{BV}.
$$
We thus have
\be
f\in BV([0,1]) \Rightarrow \sigma_N(f)_\infty \leq  CN^{-1},
\label{CMconstadaptbv}
\ee
Similar to the case of uniform partitions, the estimate \iref{CMconstadaptbv}
is sharp in the sense that a converse result holds: if
$f$ is a continuous function such that $\sigma_N(f)_\infty \leq  CN^{-1}$ for some $C>0$,
then it is necessarily in $BV([0,1])$. To see this, consider $N>0$ and 
any set of points $0\leq x_1<x_2<\cdots<x_N\leq 1$. We know
that there exists a partition $\cT_N$ of $N$
intervals and $f_N\in V_{\cT_N}$ such that $\|f-f_N\|_{L^\infty}\leq CN^{-1}$.
We define a set of points $0\leq y_1<y_2 \cdots <y_M\leq 1$ by unioning the 
set of the $x_k$ with the nodes that define the partition $\cT_N$, excluding $0$ and $1$,
so that $M<2N$. We can write
$$
\sum_{k=0}^{N-1} |f(x_{k+1})-f(x_k)| \leq 2C+ \sum_{k=0}^{N-1}|f_N(x_{k+1})-f_N(x_k)|
\leq 2C+ \sum_{k=0}^{M-1}|f_N(y_{k+1})-f_N(y_k)|.
$$
Since $y_k$ and $y_{k+1}$ are either contained in one interval or two
adjacent intervals of the partition $\cT_N$ and since $f$ is continuous, we find
that $|f_N(y_{k+1})-f_N(y_k)|$ is either zero or less than $2CN^{-1}$, from which it
follows that
$$
\sum_{k=0}^{N-1} |f(x_{k+1})-f(x_k)|\leq 6C,
$$
which shows that $f$ has bounded variation.
We have thus proved the following result.
\begin{theorem}
If $f$ is a continuous function defined on $[0,1]$ and if $\sigma_N(f)_\infty$ denotes
the $L^\infty$ error of piecewise constant approximation
on adaptive partitions, we have
\be
f\in BV([0,1]) \Leftrightarrow \sigma_N(f)_\infty \leq  CN^{-1}.
\label{CMconstadapteq}
\ee
\end{theorem}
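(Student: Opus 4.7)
The plan is to prove both implications by leaning on the basic identity
$$e_{1,\cT_N}(f)_\infty = \frac{1}{2}\max_{k=1,\dots,N}\max_{x,y\in I_k}|f(x)-f(y)|$$
from \iref{CMbasicconst} and pairing it with two different strategies: equidistribution of the $BV$ seminorm for the direct implication, and a clever finite-point argument for the converse.

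For the direction $f\in BV([0,1]) \Rightarrow \sigma_N(f)_\infty \leq CN^{-1}$, I would use the elementary bound $\max_{x,y\in I}|f(x)-f(y)| \leq |f|_{BV(I)}$, which reduces the task to constructing a partition $\cT_N=\{I_1,\dots,I_N\}$ with $|f|_{BV(I_k)} \leq N^{-1}|f|_{BV}$ for every $k$. Such a partition is produced by equidistributing the nondecreasing function $x\mapsto F(x):=|f|_{BV([0,x])}$: split $[0,1]$ at points where $F$ reaches levels $k N^{-1} F(1)$ (allowing the partition to absorb atoms of $F$ by placing breakpoints just before the jumps, so that the equidistribution holds as the inequality $|f|_{BV(I_k)} \leq N^{-1}|f|_{BV}$). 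Plugging into \iref{CMbasicconst} then yields $\sigma_N(f)_\infty \leq \frac{1}{2}N^{-1}|f|_{BV}$, i.e.\ $C=\frac{1}{2}|f|_{BV}$.

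For the converse, I would fix an arbitrary ordered set $0\leq x_0 < x_1 < \cdots < x_N \leq 1$ and invoke the approximation hypothesis to obtain $\cT_N\in\cA_N$ and $f_N \in V_{\cT_N}$ with $\|f-f_N\|_{L^\infty}\leq CN^{-1}$. Let $0\leq y_0 < y_1 < \cdots < y_M \leq 1$ denote the union of the $x_k$ with the interior breakpoints of $\cT_N$, so $M < 2N$. The triangle inequality gives
$$\sum_{k=0}^{N-1}|f(x_{k+1})-f(x_k)| \leq 2C + \sum_{k=0}^{M-1}|f_N(y_{k+1})-f_N(y_k)|.$$
Because $f_N$ is constant on each interval of $\cT_N$, the summand $|f_N(y_{k+1})-f_N(y_k)|$ vanishes unless $y_k$ and $y_{k+1}$ straddle a single breakpoint of $\cT_N$, in which case the continuity of $f$ at that breakpoint combined with $\|f-f_N\|_{L^\infty}\leq CN^{-1}$ forces $|f_N(y_{k+1})-f_N(y_k)|\leq 2CN^{-1}$. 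Since there are at most $N$ such boundary-straddling pairs, the whole sum is bounded by $6C$, and taking the supremum over all admissible point sets gives $|f|_{BV}\leq 6C<\infty$.

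The main obstacle in this proof is the converse direction, specifically controlling $|f_N(y_{k+1})-f_N(y_k)|$ when $y_k$ and $y_{k+1}$ lie in adjacent pieces of $\cT_N$: a priori the step function $f_N$ can jump by an arbitrary amount across a breakpoint, and only the \emph{continuity} of $f$ (inherited from $f\in C([0,1])$) together with the uniform bound $\|f-f_N\|_{L^\infty}\leq CN^{-1}$ rules out large jumps. A minor subtlety in the direct direction is that strict equidistribution of $|f|_{BV}$ may fail when the $BV$ measure has atoms, but this is harmless since it suffices to have the one-sided inequality $|f|_{BV(I_k)}\leq N^{-1}|f|_{BV}$.
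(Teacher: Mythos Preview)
Your proposal is correct and follows essentially the same route as the paper: the direct implication via equidistribution of the $BV$ seminorm over the intervals (yielding $C=\tfrac12|f|_{BV}$), and the converse via unioning an arbitrary test set $\{x_k\}$ with the breakpoints of an approximating partition $\cT_N$, then bounding each jump of $f_N$ across a breakpoint by $2CN^{-1}$ using the continuity of $f$. The only cosmetic differences are your explicit remark on atoms of the $BV$ measure (which is in fact moot here since the total variation function of a continuous $f$ is itself continuous) and your slightly sharper counting of boundary-straddling pairs; both proofs land on the same bound $|f|_{BV}\le 6C$.
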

In comparison with \iref{CMlipunif} we thus find that
same rate $N^{-1}$ is governed by a {\it weaker}
smoothness condition since $f'$ is not assumed
to be bounded but only a finite measure. In turn, adaptive partitions
may significantly outperform
uniform partition for a given function $f$: consider for instance the
function $f(x)=x^\alpha$ for some $0<\alpha<1$. According to 
\iref{CMholdunifeq}, the convergence rate of uniform approximation
for this function is $N^{-\alpha}$. On the other hand, since $f'(x)=\alpha x^{\alpha-1}$ 
is integrable, we find that the convergence rate of adaptive approximation
is $N^{-1}$.

The above construction of an adaptive partition is based on equidistributing the
$L^1$ norm of $f'$ or the total variation of $f$ on each interval $I_k$. 
An alternative is to build $\cT_N$ in such a way that all local errors
are equal, i.e.
\be
\e_{1,I_k}(f)_\infty=\eta,
\label{CMequidistrunif}
\ee
for some $\eta=\eta (N)\geq 0$ independent of $k$. This new
construction of $\cT_N$ does not require that $f$ belongs to $BV([0,1])$.
In the particular case
where $f\in BV([0,1])$, we obtain that
$$
N\eta \leq \sum_{k=1}^N\\e_{1,I_k}(f)_\infty \leq \frac 1 2\sum_{k=1}^N|f|_{BV(I_k)}\leq \frac 1 2|f|_{BV},
$$
from which it immediately follows that
$$
e_{1,\cT_N}(f)_\infty=\eta \leq CN^{-1},
$$
with $C=\frac 1 2|f|_{BV}$.
We thus have obtained the same
error estimate as with the previous construction of $\cT_N$.
\nl
\nl
{\it The basic principle of error equidistribution, which is expressed by
{\rm \iref{CMequidistrunif}} in the case of piecewise constant approximation
in the uniform norm, plays a central role in the derivation of 
adaptive partitions for piecewise polynomial approximation.}
\nl
\nl
Similar to the case of uniform partitions
we can express the optimality of \iref{CMconstadapt} by
a lower estimate when $f$ is smooth enough. For this
purpose, we make a slight restriction on the set 
$\cA_N$ of admissible partitions, assuming that
the diameter of all intervals decreases 
as $N\to +\infty$, according to 
$$
\max_{I_k\in \cT_N} |I_k| \leq AN^{-1},
$$
for some $A>0$ which may be arbitrarily large.
Assume that $f\in C^1([0,1])$, so that for all 
$\e>0$, there exists $\eta>0$ such that
\be
|x-y|\leq \eta \Rightarrow |f'(x)-f'(y)|\leq \frac \e A.
\label{CMrestrict}
\ee
If $N$ is such that $AN^{-1}\leq \eta$, we can introduce
on each interval $I_k$ an affine function
$p_k(x)=f(x_k)+(x-x_k)f'(x_k)$ where $x_k$ is an arbitrary point
in $I_k$, and we then have
$$
\|f-p_k\|_{L^\infty(I_k)}\leq N^{-1} \e.
$$
It follows that
$$
\begin{array}{ll}
e_{1,I_k}(f)_\infty &\geq e_{1,I_k}(p_k)_\infty-e_{1,I_k}(f-p_k)_\infty\\
& \geq e_{1,I_k}(p_k)_\infty-\frac 1 2 N^{-1} \e \\
& =\frac 1 2 (\int_{I_k}|p'_k(t)|dt-N^{-1}\e)Ê\\
& \geq \frac 1 2 (\int_{I_k}|f'(t)|dt-2N^{-1}\e).
\end{array}
$$
Since there exists at least one interval $I_k$ such that 
$\int_{I_k}|f'(t)|dt\geq N^{-1} \|f'\|_{L^1}$, it follows that
$$
e_{1,\cT_N}(f)_\infty\geq \frac 1 2 N^{-1}(\|f'\|_{L^1}-2\e).
$$
This inequality becomes an equality only when all 
quantities $\int_{I_k}|f'(t)|dt$ are equal, which justifies the
equidistribution principle for the design of an optimal partition.
Since $\e>0$ is arbitrary, we have thus obtained
the lower estimate
\be
\liminf_{N\to +\infty} \;N\sigma_N(f) \geq \frac 1 2  \|f'\|_{L^1}.
\label{CMsatadaptpwconst}
\ee
The restriction on the family of adaptive partitions $\cA_N$ is not
so severe since $A$ maybe chosen arbitrarily large. In particular,
it is easy to prove that the upper estimate is almost preserved in
the following sense: for a given $f\in C^1$ and 
any $\e>0$, there exists $A>0$ depending on $\e$ such that
$$
\limsup_{N\to +\infty} \;N\sigma_N(f) \leq \frac 1 2  \|f'\|_{L^1}+\e,
\label{CMsaturationpwconst}
$$
These results show that for smooth enough
functions, the numerical quantity that governs the rate of convergence $N^{-1}$
of adaptive piecewise constant approximations is exactly $\frac 1 2  \|f'\|_{L^1}$.
Note that $\|f'\|_{L^\infty}$ may be substantially larger than $\|f'\|_{L^1}$
even for very smooth functions, in which case adaptive partitions
performs at a similar rate as uniform partitions, but with
a much more favorable multiplicative constant.

\subsection{A greedy refinement algorithm}

The principle of error distribution suggests a simple algorithm for the generation of 
adaptive partitions, based on a greedy refinement algorithm:
\begin{enumerate}
\item
Initialization: $\cT_1=\{[0,1]\}$.\vspace{0.2cm}
\item
Given $\cT_N$ select $I_m\in\cT_N$ that maximizes the local error $e_{1,I_k}(f)_\infty$.\vspace{0.2cm}
\item
Split $I_m$ into two sub-intervals of equal size to obtain $\cT_{N+1}$ and return to step 2.
\end{enumerate}
The family $\cA_N$ of adaptive partitions that are generated by this algorithm
is characterized by the restriction that all intervals are of the dyadic type
$2^{-j}[n,n+1]$ for some $j\geq 0$ and $n\in \{0,\cdots,2^j-1\}$.
We also note that all such partitions $\cT_N$ may be identified
to a finite subtree with $N$ leaves, picked within an infinite
dyadic {\it master tree} $\cM$ in which each node represents a dyadic interval.
The root of $\cM$ corresponds to $[0,1]$ and each node $I$ of generation
$j$ corresponds to an interval of length $2^{-j}$ 
which has two {\it children} nodes of generation $j+1$ corresponding 
to the two halves of $I$. This identification, which is
illustrated on Figure \ref{CMtreepart}, is useful 
for coding purposes since any such subtree can 
be encoded by $2N$ bits.

\begin{figure}
\centerline{
\includegraphics[width=10cm,height=2cm]{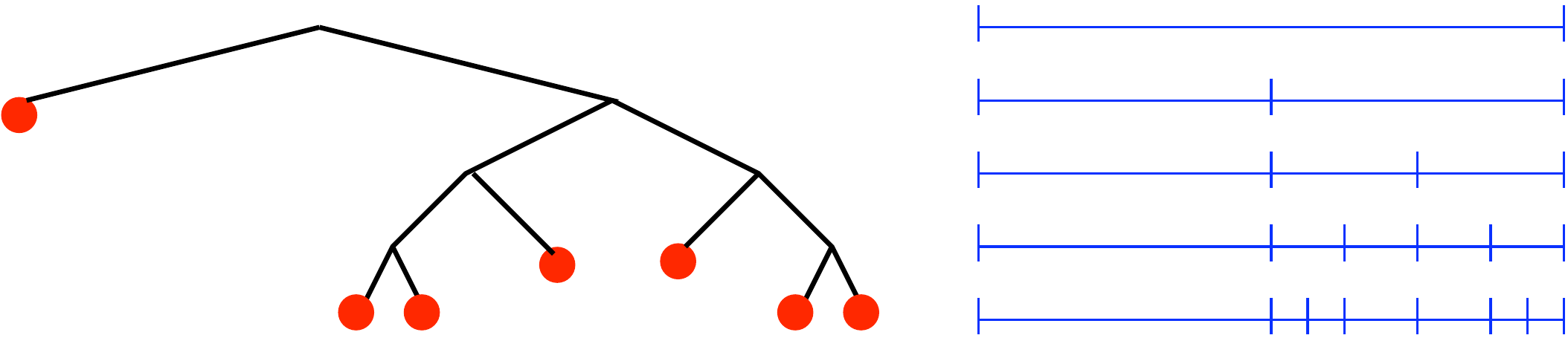}
}
\caption{Adaptive dyadic partitions identify to dyadic trees}
\label{CMtreepart}
\end{figure}

We now want to understand how the approximations generated by adaptive refinement algorithm behave
in comparison to those associated with the optimal partition. In particular, do we also have
that $e_{1,\cT_N}(f)_\infty \leq C N^{-1}$ when $f'\in L^1$ ? The 
answer to this question turns out to be negative, but it was
proved in \cite{CMdy} that a
slight strengthening of the smoothness assumption is
sufficient to ensure this convergence rate : we instead assume
that the {\it maximal function} of $f'$ is in $L^1$. We recall that
the maximal function of a locally integrable function $g$ is defined by
$$
M_g(x):=\sup_{r>0} |B(x,r)|^{-1}\int_{B(x,r)}|g(t)|dt,
$$
It is known that $M_g\in L^p$ if and only if $g\in L^p$ for $1<p<\infty$
and that $M_g\in L^1$ if and only if $g\in L\log L$, i.e. 
$\int_0^1 |g(t)|\log(1+|g(t)|)dt <\infty$, see \cite{CMstein}. In this sense, the assumption
that $M_{f'}$ is integrable is only slightly stronger than $f\in W^{1,1}$.
\nl
\nl
If $\cT_N:=(I_1,\cdots,I_N)$, define the 
accuracy
$$
\eta:=\max_{1\leq k\leq N}e_{1,I_k}(f)_\infty.
$$
For each $k$, we denote by $J_k$ the interval which is the {\it parent} of $I_k$ in the
refinement process. From the definition of the algorithm, we necessarily have
$$
\eta \leq \|f-a_{J_k}(f)\|_{L^\infty}Ê\leq \int_{J_k}|f'(t)|dt.
$$
For all $x\in I_k$, the ball $B(x,2|I_k|)$ contains $J_k$ and
it follows therefore that 
$$
M_{f'}(x)\geq  |B(x,2|I_k|)|^{-1}\int_{B(x,2|I_k|)}|f'(t)|dt
\geq [4|I_k|]^{-1}\eta,
$$
which implies in turn
$$
\int_{I_k}M_{f'}(t)dt \geq \eta/4.
$$
If $M_{f'}$ is integrable, this yields the estimate
$$
N\eta \leq  4\int_0^1M_{f'}(t)dt.
$$
It follows that 
$$
e_{1,\cT_N}(f)_\infty=\eta \leq CN^{-1}
$$
with $C=4\|M_{f'}\|_{L^1}$.  We
have thus established the following result.
\begin{theorem}
If $f$ is a continuous function defined on $[0,1]$
and if $\sigma_N(f)_\infty$ denotes the $L^\infty$
error of piecewise constant approximation
on adaptive partitions of dyadic type, we have
\be
M_{f'}\in L^1([0,1]) \Rightarrow \sigma_N(f)_\infty \leq  CN^{-1},
\label{CMconstgreedy}
\ee
and that this rate may be achieved by the above described greedy algorithm.
\end{theorem}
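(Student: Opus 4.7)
The plan is to estimate the accuracy $\eta := e_{1,\cT_N}(f)_\infty$ of the greedy partition $\cT_N = \{I_1,\ldots,I_N\}$ directly in terms of $\|M_{f'}\|_{L^1}$, via the bound $N\eta \leq 4\|M_{f'}\|_{L^1}$. The geometric ingredient is the parent $J_k$ of each leaf $I_k$ in the dyadic refinement tree, satisfying $|J_k| = 2|I_k|$ and $I_k \subset J_k$.

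My first step is to show that for each $k$,
$$\eta \leq \int_{J_k} |f'(t)|\,dt.$$
For this I would observe that the sequence of maxima of leaf errors is monotone non-increasing along the greedy refinement: splitting the leaf of maximal error into its two halves can only decrease the local errors on the two new children (since $\max_{x,y\in I}|f(x)-f(y)|$ is monotone under restriction of $I$), while all other leaves are left untouched. Consequently, at the moment the algorithm split $J_k$, its local error $e_{1,J_k}(f)_\infty$ was the maximum at that stage, hence was at least the present value $\eta$. Bounding $e_{1,J_k}(f)_\infty$ from above by $\|f - a_{J_k}(f)\|_{L^\infty(J_k)}$, where $a_{J_k}(f)$ denotes the average of $f$ on $J_k$, and using that $|f(x) - a_{J_k}(f)| \leq \max_{y\in J_k}|f(x)-f(y)| \leq \int_{J_k}|f'|$ for each $x \in J_k$, yields the stated inequality.

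The second step is a standard covering argument with the Hardy--Littlewood maximal function. For any $x \in I_k$, the ball $B(x, 2|I_k|)$ has length $4|I_k|$ and contains $J_k$, so by definition of $M_{f'}$,
$$M_{f'}(x) \geq \frac{1}{4|I_k|}\int_{J_k}|f'(t)|\,dt \geq \frac{\eta}{4|I_k|}.$$
Integrating over $I_k$ gives $\int_{I_k} M_{f'}(t)\,dt \geq \eta/4$, and summing over the $N$ disjoint leaves of $\cT_N$, which partition $[0,1]$, yields $N\eta \leq 4\|M_{f'}\|_{L^1}$. This gives exactly $e_{1,\cT_N}(f)_\infty \leq C N^{-1}$ with $C = 4\|M_{f'}\|_{L^1}$, which is the desired estimate.

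The delicate point is really the monotonicity argument in the first step: one must exploit the history of the greedy process to transfer the current accuracy $\eta$ onto an integral over the parent $J_k$, rather than over the (smaller) leaf $I_k$ itself. Without this transfer, the inclusion $I_k \subset B(x, 2|I_k|)$ would not be enough to invoke the maximal function; it is precisely the presence of $J_k$, strictly larger than $I_k$, that makes the covering argument work.
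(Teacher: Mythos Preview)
Your proof is correct and follows essentially the same approach as the paper: both use the greedy property to bound $\eta \leq e_{1,J_k}(f)_\infty \leq \int_{J_k}|f'|$ via the parent interval $J_k$, then invoke the maximal function through the inclusion $J_k \subset B(x,2|I_k|)$ to obtain $\int_{I_k} M_{f'} \geq \eta/4$, and sum over the $N$ leaves to conclude $N\eta \leq 4\|M_{f'}\|_{L^1}$. Your explicit monotonicity argument for the maximum of the leaf errors along the greedy process is a welcome elaboration of what the paper compresses into ``from the definition of the algorithm, we necessarily have $\eta \leq \|f-a_{J_k}(f)\|_{L^\infty}$''.
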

Note however that a converse to \iref{CMconstgreedy} does not hold and
that we do not so far know of a simple smoothness property that 
would be exactly equivalent to the rate of approximation $N^{-1}$ by
dyadic adaptive partitions. A by-product of \iref{CMconstgreedy} is that 
\be
f\in W^{1,p}([0,1]) \Rightarrow \sigma_N(f)_\infty \leq  CN^{-1},
\label{CMconstgreedyp}
\ee
for any $p>1$.

\section{Adaptive and isotropic approximation}

We now consider the problem of piecewise polynomial
approximation on a domain $\Omega\subset \RR^d$,
using adaptive and {\it isotropic} partitions. We therefore
consider a sequence $(\cA_N)_{N\geq 0}$
of families of partitions that satisfies the restriction
\iref{isotropy}. We use piecewise polynomials of
degree $m-1$ for some fixed but arbitrary $m$.

Here and in all the rest of the chapter,
we restrict our attention to partitions
into geometrically simple elements which are either 
cubes, rectangles or 
simplices. These simple elements satisfy a property
of {\it affine invariance}: there exist
a {\it reference element} $R$ such that
any $T\in\cT\in \cA_N$ is the
image of $R$ by an invertible
affine transformation $A_T$. We can
choose $R$ to be the unit
cube $[0,1]^d$
or the unit simplex $\{0\leq x_1\leq \cdots\leq x_d\leq 1\}$ in the case
of partitions by cubes and rectangles
or simplices, respectively.

\subsection{Local estimates}

If $T\in \cT$ is an element and $f$ is a function
defined on $\Omega$, we study the local approximation error
\be
e_{m,T}(f)_p:=\min_{\pi\in \PP_{m-1}} \|f-\pi\|_{L^p(T)}.
\label{CMlocalerror}
\ee
When $p=2$ the minimizing
polynomial is given by
$$
\pi:=P_{m,T}f,
$$
where $P_{m,T}$ is the $L^2$-orthogonal projection, and can
therefore be computed by solving a least square system. 
When $p\neq 2$, the minimizing polynomial
is generally not easy to determine. However
it is easily seen that the $L^2$-orthogonal projection
remains an acceptable choice: indeed, it can
easily be checked that the operator norm of $P_{m,T}$
in $L^p(T)$ is bounded by a
constant $C$ that only depends on $(m,d)$ but not
on the cube or simplex $T$. From this we infer that for all $f$
and $T$ one has
\be
e_{m,T}(f)_p\leq \|f-P_{m,T}f\|_{L^p(T)}\leq (1+C)e_{m,T}(f)_p.
\label{CMequivlocal}
\ee
Local estimates for $e_{m,T}(f)_p$ can be obtained 
from local estimates on the reference element $R$,
remarking that
\be
e_{m,T}(f)_p=\(\frac {|T|}{|R|}\)^{1/p} e_{m,R}(g)_p,
\label{CMcdvlp}
\ee 
where $g=f\circ A_T$. Assume that $p,\tau\geq 1$ are such that
$\frac 1 \tau=\frac 1 p + \frac m d$, and let $g\in W^{m,\tau}(R)$.
We know from Sobolev embedding that 
$$
\|g\|_{L^p(R)} \leq C\|g\|_{W^{m,\tau}(R)},
$$
where the constant $C$ depends on $p,\tau$ and $R$. 
Accordingly, we obtain
\be
e_{m,R}(g)_p\leq C\min_{\pi\in\PP_{m-1}}\|g-\pi\|_{W^{m,\tau}(R)}.
\label{CMlocalref}
\ee
We then
invoke Deny-Lions theorem which states that if $R$ is 
a connected domain, there exists
a constant $C$ that only depends on $m$ and $R$ such that
\be
\min_{\pi\in\PP_{m-1}}\|g-\pi\|_{W^{m,\tau}(R)}\leq C|g|_{W^{m,\tau}(R)}.
\label{CMdenylions}
\ee
If $g=f\circ A_T$, we obtain by this change of variable that
\be
|g|_{W^{m,\tau}(R)} \leq C\(\frac {|R|}{|T|}\)^{1/\tau} \|B_T\|^m |f|_{W^{m,\tau}(T)},
\label{CMcdvm}
\ee
where $B_T$ is the linear part of $A_T$ and $C$ is a constant
that only depends on $m$ and $d$.
A well known and easy to derive bound for $\|B_T\|$ is 
\be
\|B_T\|\leq \frac {h_T}{\rho_R},
\label{CMboundbt}
\ee
Combining \iref{CMcdvlp}, \iref{CMlocalref}, \iref{CMdenylions}, \iref{CMcdvm} and \iref{CMboundbt}, we thus
obtain a local estimate of the form
$$
e_{m,T}(f)_p\leq C|T|^{1/p-1/ \tau}\, h_T^m |f|_{W^{m,\tau}(T)}=C|T|^{-m/d} h_T^m |f|_{W^{m,\tau}(T)}.
$$
where we have used the relation $\frac 1 \tau=\frac 1 p+\frac m d$. 
From the isotropy restriction \iref{isotropy}, there 
exists a constant $C>0$ independent of $T$ such that 
$h_T^d\leq C|T|$. We have thus established the following 
local error estimate.

\begin{theorem}
\label{CMlocalestheo}
If $f\in W^{m,\tau}(\Omega)$, we have for all element $T$
\be
e_{m,T}(f)_p\leq C |f|_{W^{m,\tau}(T)},
\label{CMlocalinv}
\ee
where the constant $C$ only depends on $m$, $R$
and the constants in \iref{isotropy}.
\end{theorem}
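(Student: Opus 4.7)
The plan is to reduce the local estimate on $T$ to a scale-invariant estimate on the reference element $R$, where standard results (Sobolev embedding and Deny-Lions) apply, and then to transport the bound back to $T$ via the affine map $A_T$, tracking carefully how the various powers of $|T|$ and $h_T$ combine.

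First I would write $g := f\circ A_T$ and use the change-of-variable identity \iref{CMcdvlp} to obtain $e_{m,T}(f)_p = (|T|/|R|)^{1/p}\, e_{m,R}(g)_p$. On the fixed reference element $R$, the relation $\frac{1}{\tau} = \frac{1}{p} + \frac{m}{d}$ is exactly the Sobolev embedding exponent, so \iref{CMlocalref} gives $e_{m,R}(g)_p \leq C \min_{\pi\in\PP_{m-1}}\|g-\pi\|_{W^{m,\tau}(R)}$. Since $R$ is a fixed connected domain (a cube or a simplex), Deny-Lions \iref{CMdenylions} then replaces this by $C\,|g|_{W^{m,\tau}(R)}$, absorbing the infimum over polynomials into the pure $m$-th order semi-norm.

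Next I would transport the semi-norm back to $T$: the change-of-variable formula for top-order derivatives yields \iref{CMcdvm}, namely $|g|_{W^{m,\tau}(R)} \leq C (|R|/|T|)^{1/\tau}\|B_T\|^m |f|_{W^{m,\tau}(T)}$, where $B_T$ is the linear part of $A_T$. Chaining these inequalities gives
\be
e_{m,T}(f)_p \leq C\, |T|^{1/p - 1/\tau}\, \|B_T\|^m\, |f|_{W^{m,\tau}(T)},
\label{CMplanchain}
\ee
and the exponent simplifies to $1/p - 1/\tau = -m/d$ by the definition of $\tau$. Bounding the operator norm via \iref{CMboundbt} yields $\|B_T\|^m \leq (h_T/\rho_R)^m$, a factor depending only on $R$ times $h_T^m$, so that $e_{m,T}(f)_p \leq C\, |T|^{-m/d} h_T^m |f|_{W^{m,\tau}(T)}$.

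Finally, the isotropy hypothesis \iref{isotropy} is used exactly once: together with the elementary inequality $|T| \geq c\, \rho_T^d$ valid for cubes or simplices, it implies $h_T^d \leq C\, |T|$, so the factor $|T|^{-m/d} h_T^m$ is bounded by a constant depending only on $m$, $d$, $R$ and the isotropy constant. Substituting, we conclude \iref{CMlocalinv}. The only genuinely delicate point is keeping track that \emph{every} constant produced in the chain depends only on $(m,d,R)$ and the isotropy constant, but not on $T$ itself; this is guaranteed because Deny-Lions and Sobolev embedding are applied once and for all on the fixed reference element $R$, so all $T$-dependence is funneled through the affine map $A_T$ and the explicit powers of $|T|$ and $h_T$ that the change-of-variable formulas produce.
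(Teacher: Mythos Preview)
Your proof is correct and follows essentially the same route as the paper: pull back to the reference element via \iref{CMcdvlp}, apply Sobolev embedding \iref{CMlocalref} and Deny-Lions \iref{CMdenylions} on $R$, push forward with \iref{CMcdvm} and \iref{CMboundbt}, and then invoke the isotropy bound $h_T^d\leq C|T|$ to kill the remaining geometric factor. The sequence of estimates and the bookkeeping of constants match the paper's argument step for step.
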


Let us mention several useful generalizations
of the local estimate \iref{CMlocalinv} that can be obtained
by a similar approach based on a change of variable
on the reference element. First, if $f\in W^{s,\tau}(\Omega)$
for some $0<s\leq m$ and $\tau\geq 1$ such that $\frac 1 \tau=\frac 1 p+\frac s d$, 
we have
\be
e_{m,T}(f)_p\leq C |f|_{W^{s,\tau}(T)}.
\label{CMlocalinvs}
\ee
Recall that when $s$ is not an integer, the $W^{s,\tau}$ semi-norm is defined
by 
$$
|f|_{W^{s,\tau}(\Omega)^q}:=\sum_{|\alpha|=n} \int_{\Omega\times \Omega}
\frac {|\partial^\alpha f(x)-\partial^\alpha f(y)|^\tau}
{|x-y|^{(s-n)\tau+d}} dxdy,
$$
where $n$ is the largest integer below $s$. In the more
general case where $\frac 1 \tau\leq \frac 1 p + \frac s d$, 
we obtain an estimate that depends on the diameter of $T$:
\be
e_{m,T}(f)_p\leq Ch_T^{r} |f|_{W^{s,\tau}(T)},\;\; r:=\frac d p-\frac d \tau + s\geq 0.
\label{CMlocalinvsr}
\ee
Finally, remark that for a fixed $p\geq 1$ and $s$, the index $\tau$ defined by 
$\frac 1 \tau=\frac 1 p+\frac s d$ may be smaller than $1$, in which case
the Sobolev space $W^{s,\tau}(\Omega)$ is not well defined. The local
estimate remain valid if $W^{s,\tau}(\Omega)$ is replaced by the {\it Besov space} $B^s_{\tau,\tau}(\Omega)$.
This space consists of all $f\in L^\tau(\Omega)$ functions such that
$$
|f|_{B^s_{\tau,\tau}}:=\|\omega_{k}(f,\cdot)_\tau\|_{L^\tau([0,\infty[,\frac {dt} t)},
$$
is finite. Here $k$ is the smallest integer above $s$ and
$\omega_k(f,t)_\tau$ denotes the $L^\tau$-modulus of smoothness of order $k$
defined by
$$
\omega_k(f,t)_\tau:=\sup_{|h|\leq t}\|\Delta_h^k f\|_{L^\tau},
$$
where $\Delta_h f:=f(\cdot+h)-f(\cdot)$ is the usual difference operator.
The space $B^s_{\tau,\tau}$ describes functions which have 
``$s$ derivatives in $L^\tau$'' in a very similar way as $W^{s,\tau}$.
In particular it is known that these two spaces coincide when
$\tau\geq 1$ and $s$ is not an integer.
We refer to \cite{CMdel} and \cite{CMco} for more details on Besov spaces
and their characterization by approximation procedures. For 
all $p,\tau>0$ and $0\leq s\leq m$ such that $\frac 1 \tau\leq \frac 1 p + \frac s d$, 
a local estimate generalizing \iref{CMlocalinvsr} has the form
\be
e_{m,T}(f)_p\leq Ch_T^{r} |f|_{B^{s}_{\tau,\tau}(T)},\;\; r:=\frac d p-\frac d \tau + s\geq 0.
\ee

\subsection{Global estimates}

We now turn our local estimates into global
estimates, recalling that 
$$
e_{m,\cT}(f)_p:=\min_{g\in V_{\cT}}\|f-g\|_{L^p} =\(\sum_{T\in\cT}e_{m,T}(f)_p^p\)^{1/p};
$$
with the usual modification when $p=\infty$.
We apply the principle of error equidistribution
assuming that the partition
$\cT_N$ is built in such way
that 
\be
e_{m,T}(f)_p=\eta,
\label{strictequid}
\ee
for all $T\in\cT_N$ where $N=N(\eta)$. A first immediate estimate for the
global error is therefore
\be
e_{m,\cT_N}(f)_p\leq N^{1/p}\eta.
\label{CMtrivialglobal}
\ee
Assume now that $f\in W^{m,\tau}(\Omega)$ with $\tau\geq 1$ such
that $\frac 1 \tau=\frac 1 p +\frac m d$. It then follows from
Theorem \ref{CMlocalestheo} that
$$
N\eta^\tau\leq \sum_{T\in\cT_N}e_{m,T}(f)_p^\tau \leq C \sum_{T\in\cT_N} |f|_{W^{m,\tau}(T)}^\tau
=C|f|_{W^{m,\tau}}^\tau,
$$
Combining with \iref{CMtrivialglobal} 
and using the relation $\frac 1 \tau=\frac 1 p+Ê\frac m d$, we have thus obtained
that for adaptive partitions $\cT_N$ built according to the error 
equidistribution, we have
\be
e_{m,\cT_N}(f)_p\leq CN^{-m/d}|f|_{W^{m,\tau}}.
\label{CMadaptisoequid}
\ee
By using \iref{CMlocalinvs}, we obtain in a similar manner
that if $0\leq s\leq m$ and $\tau\geq 1$ 
are such that $\frac 1 \tau=\frac 1 p+Ê\frac s d$, then
\be
e_{m,\cT_N}(f)_p\leq CN^{-s/d}|f|_{W^{s,\tau}}.
\label{CMerrorisoequid}
\ee
Similar results hold when $\tau<1$ with $W^{s,\tau}$ replaced by $B^s_{\tau,\tau}$
but their proof requires a bit more work due to the fact that $|f|_{B^s_{\tau,\tau}}^\tau$
is not sub-additive with respect to the union of sets. We also reach similar
estimate in the case $p=\infty$ by a standard modification of the argument.

The estimate \iref{CMadaptisoequid} suggests that
for piecewise polynomial approximation on adaptive and isotropic
partitions, we have
\be
\sigma_N(f)_p\leq CN^{-m/d}|f|_{W^{m,\tau}}, \;\; \frac 1 \tau=\frac 1 p+\frac m d.
\label{CMsigmaNtisoequid} 
\ee
Such an estimate should be compared to \iref{CMsigmaNunif},
in a similar way as we compared \iref{CMconstadapteq}
with \iref{CMlipunif} in the one dimensional case: the same
same rate $N^{-m/d}$ is governed by a weaker smoothness condition.

In contrast to the one dimensional case, however, we cannot
easily prove the validity of \iref{CMsigmaNtisoequid} since it
is not obvious that there exists a partition $\cT_N\in \cA_N$
which equidistributes the error in the sense of \iref{strictequid}.
It should be remarked that the derivation of estimates such as 
\iref{CMadaptisoequid} does not require a strict equidistribution
of the error. It is
for instance sufficient to assume
that $e_{m,T}(f)_p\leq \eta$
for all $T\in \cT_N$, and that
$$
c_1\eta\leq e_{m,T}(f)_p,
$$
for at least $c_2N$ elements of $\cT_N$, where $c_1$ and $c_2$
are fixed constants. Nevertheless, the construction of 
a partition $\cT_N$ satisfying such prescriptions still appears
as a difficult task both from a theoretical and algorithmical point of view.

\subsection{An isotropic greedy refinement algorithm}
\label{CMsecgreedyiso}

We now discuss a simple adaptive refinement algorithm
which emulates error equidistribution, similar to the
algorithm which was discussed in the one dimensional case.
For this purpose, we first build a hierarchy of nested 
quasi-uniform partitions
$(\cD_j)_{j\geq 0}$, where $\cD_0$ is a
coarse triangulation and where $\cD_{j+1}$ is obtained
from $\cD_j$ by splitting each of its elements into
a fixed number $K$ of children. We therefore have
$$
\#(\cD_j)=K^j \#(\cD_0),
$$
and since the partitions $\cD_j$ are assumed to be
quasi-uniform, there exists two constants $0<c_1\leq c_2$ such that
\be
c_1 K^{-j/d}\leq h_T \leq c_2 K^{-j/d},
\label{quasiunifhier}
\ee
for all $T\in\cD_j$ and $j\geq 0$. For example, in the case of
two dimensional triangulations, we may choose
$K=4$ by splitting each triangle into $4$ similar triangles
by the midpoint rule, or $K=2$ by bisecting each triangle
from one vertex to the midpoint of the opposite edge
according to a prescribed rule in order to preserve isotropy.
Specific rules which have been extensively studied
are bisection from the most recently 
generated vertex \cite{CMbdd} or towards the longest edge \cite{CMri}.
In the case of partitions by rectangles, we may preserve isotropy by 
splitting each rectangle into $4$ similar rectangles by the midpoint rule.

The refinement algorithm reads as follows:
\begin{enumerate}
\item
Initialization: $\cT_{N_0}=\cD_0$ with $N_0:=\#(\cD_0)$.\vspace{0.2cm}
\item
Given $\cT_N$ select $T\in\cT_N$ that maximizes $e_{m,T}(f)_T$.\vspace{0.2cm}
\item
Split $T$ into its $K$ childrens to obtain $\cT_{N+K-1}$ and return to step 2.
\end{enumerate}
Similar to the one dimensional case, the
adaptive partitions that are generated by this algorithm
are restricted to a particular family where 
each element $T$ is picked within an infinite
dyadic {\it master tree} $\cM=\cup_{j\geq 0} \cD_j$ which 
roots are given by the elements $\cD_0$.
The partition $\cT_N$ may be identified
to a finite subtree of $\cM$ with $N$ leaves.
Figure \ref{CMisopart}
displays an example of adaptively refined partitions
either based on longest edge bisection for triangles,
or by quad-split for squares.

\begin{figure}
\centerline{
\includegraphics[width=4cm,height=4cm]{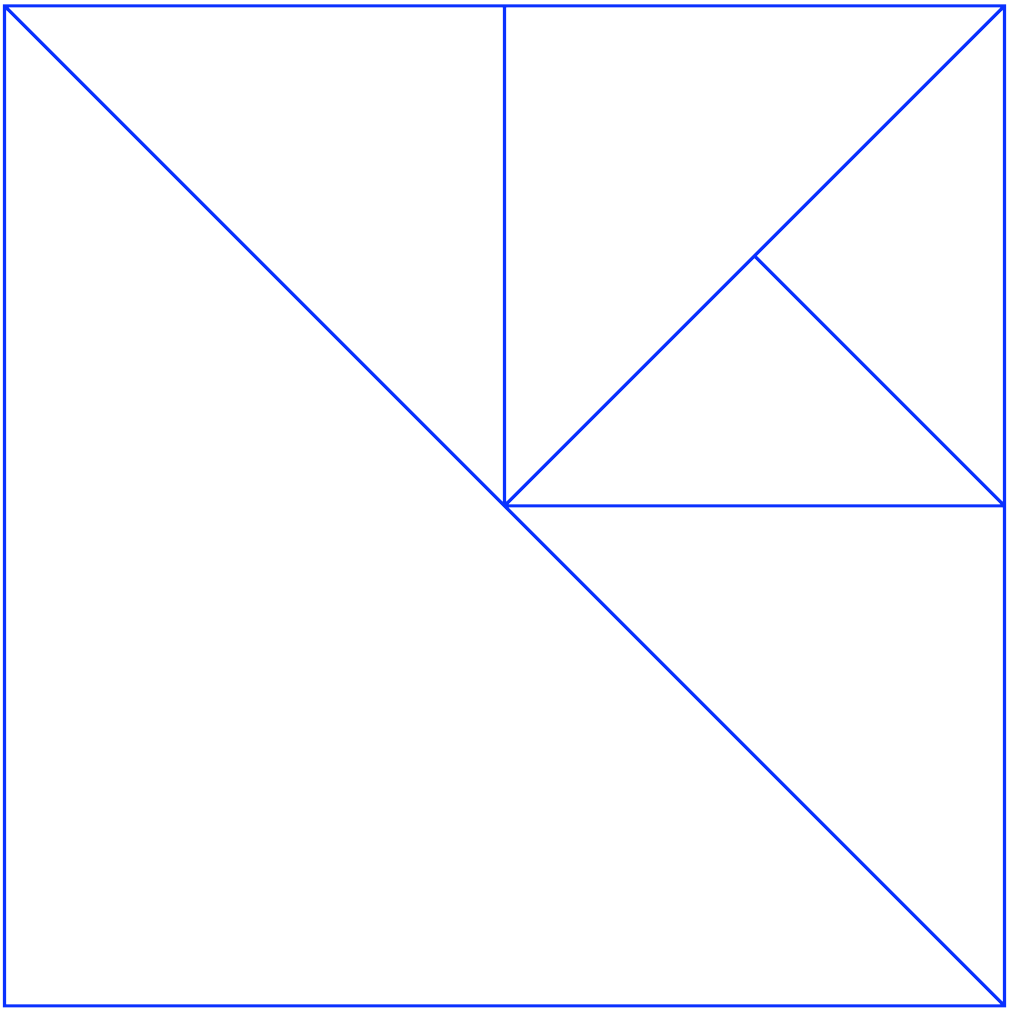}
\hspace{1cm}
\includegraphics[width=4cm,height=4cm]{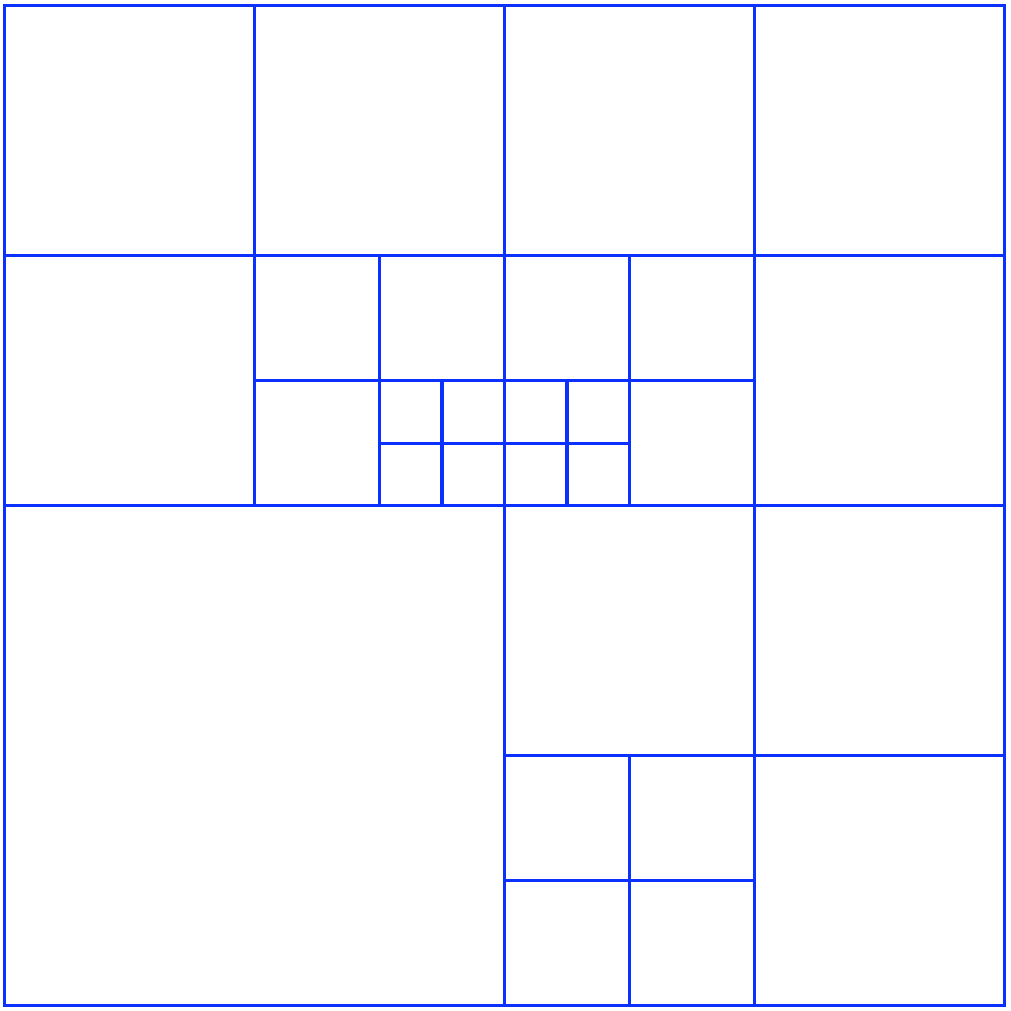}
}
\caption{Adaptively refined partitions based on longest edge bisection (left)
or quad-split (right)}
\label{CMisopart}
\end{figure}

This algorithm cannot exactly achieve error equidistribution,
but our next result reveals that it generates partitions
that yield error estimates almost similar to \iref{CMadaptisoequid}.

\begin{theorem}
\label{CMtheoisogreedy}
If $f\in W^{m,\tau}(\Omega)$ for some $\tau\geq 1$ such that
$\frac 1 \tau < \frac 1 p+\frac m d$, we then have for all $N\geq 2 N_0=2\#(\cD_0)$,
\be
e_{m,\cT_N}(f)_p\leq CN^{-m/d}|f|_{W^{m,\tau}},
\label{CMerrorisogreedy}
\ee
where $C$ depends on $\tau$, $m$, $K$, $R$ and the choice of $\cD_0$.
We therefore have for piecewise polynomial
approximation on adaptively refined partitions
\be
\sigma_N(f)_p\leq CN^{-m/d}|f|_{W^{m,\tau}}, \;\; \frac 1 \tau>\frac 1 p+\frac m d.
\label{CMsigmaNtisogreedy} 
\ee
\end{theorem}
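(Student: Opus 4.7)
The plan is to adapt the greedy-refinement argument from the one-dimensional case to the multivariate setting, replacing the maximal-function step by the Sobolev-type local estimate \iref{CMlocalinvsr}. I would first observe that the running maximum $\eta_N := \max_{T \in \cT_N} e_{m,T}(f)_p$ is non-increasing along the greedy sequence: when a parent $T_\ast$ is split into children $T_1,\dots,T_K$, restricting the $L^p$-optimal polynomial on $T_\ast$ to each child yields $e_{m,T_i}(f)_p \leq e_{m,T_\ast}(f)_p$, while the errors on the unchanged elements are at most $\eta_N$ by definition. Consequently every element $T_\ast$ of the set of internal nodes $\cS \subset \cM$ (every element ever selected for refinement up to level $N$) satisfies $e_{m,T_\ast}(f)_p \geq \eta$, where $\eta := \eta_N$.

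The hypothesis $\frac{1}{\tau} < \frac{1}{p} + \frac{m}{d}$ is exactly the condition that the exponent $r := m + \frac{d}{p} - \frac{d}{\tau}$ appearing in \iref{CMlocalinvsr} is strictly positive. Combining this local estimate with the hierarchical quasi-uniformity $h_{T_\ast} \leq c\, K^{-j/d}$ for $T_\ast \in \cD_j$ (cf.~\iref{quasiunifhier}), one obtains for each $T_\ast \in \cS \cap \cD_j$ the inequality
\[
\eta \leq e_{m,T_\ast}(f)_p \leq C\, K^{-jr/d}\, |f|_{W^{m,\tau}(T_\ast)}.
\]
Raising to the power $\tau$ and summing over the pairwise disjoint $T_\ast \in \cS \cap \cD_j$ (sub-additivity of $|f|_{W^{m,\tau}(\cdot)}^\tau$ on disjoint subsets of $\Omega$) produces $|\cS \cap \cD_j| \leq C\, K^{-jr\tau/d}\, \eta^{-\tau}\, |f|_{W^{m,\tau}}^\tau$. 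Since $r>0$, the geometric series $\sum_{j\geq 0} K^{-jr\tau/d}$ converges, and summing the bound over $j$ yields $n := |\cS| = (N-N_0)/(K-1) \leq C\, \eta^{-\tau}\, |f|_{W^{m,\tau}}^\tau$, so that $\eta_N \leq C\, N^{-1/\tau}\, |f|_{W^{m,\tau}}$ for $N \geq 2N_0$.

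The main obstacle is upgrading this pointwise bound to the $L^p$-summed rate of the theorem. The crude estimate $e_{m,\cT_N}(f)_p \leq N^{1/p}\eta_N$ yields only the rate $N^{1/p - 1/\tau}$, which coincides with $N^{-m/d}$ at the critical index $\tau^\ast := (1/p + m/d)^{-1}$ but is strictly weaker under the hypothesis $\tau > \tau^\ast$. To bridge this gap, I would refine the leaf count via $|\cT_N \cap \cD_j| \leq K\,|\cS \cap \cD_{j-1}|$, combine with the per-generation bound above and the leafwise inequality $e_{m,T}(f)_p \leq \eta$, obtaining
\[
\sum_{T \in \cT_N \cap \cD_j} e_{m,T}(f)_p^p \leq C\, K^{-jr\tau/d}\, \eta^{p-\tau}\, |f|_{W^{m,\tau}}^\tau,
\]
sum the convergent series in $j$, substitute the bound on $\eta_N$, and finally invoke the bounded-domain embedding $W^{m,\tau}(\Omega) \hookrightarrow W^{m,\tau^\ast}(\Omega)$ (H\"older inequality, valid since $\tau > \tau^\ast$) to reduce to the critical regime, where the equidistribution estimate \iref{CMsigmaNtisoequid} delivers the desired $N^{-m/d}$ rate. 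The bound \iref{CMsigmaNtisogreedy} for $\sigma_N$ then follows from $\sigma_N \leq e_{m,\cT_N}$.
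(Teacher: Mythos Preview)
Your derivation up to $\eta_N \leq C N^{-1/\tau}|f|_{W^{m,\tau}}$ is correct, and you rightly observe that combined with $e_{m,\cT_N}(f)_p \leq N^{1/p}\eta_N$ this yields only the rate $N^{1/p-1/\tau}$, strictly weaker than $N^{-m/d}$ under the hypothesis $\frac 1 \tau < \frac 1 p + \frac m d$. However, your proposed remedy does not close the gap. The refined level-by-level counting you describe still produces the same suboptimal exponent: bounding $\sum_{T\in\cT_N\cap\cD_j} e_{m,T}(f)_p^p$ by $C K^{-jr\tau/d}\eta^{p-\tau}|f|_{W^{m,\tau}}^\tau$ and summing in $j$ gives $e_{m,\cT_N}(f)_p^p\leq C\eta^{p-\tau}|f|_{W^{m,\tau}}^\tau$, which after inserting $\eta\lesssim N^{-1/\tau}|f|_{W^{m,\tau}}$ returns $N^{-(1/\tau-1/p)}$ again (and even this substitution requires $p\geq\tau$, which is not implied by the hypothesis). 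More seriously, the final appeal to \iref{CMsigmaNtisoequid} is circular: the paper explicitly states, in the paragraph following that display, that \iref{CMsigmaNtisoequid} is only a heuristic target derived under an error-equidistribution assumption that is not known to be achievable. The whole purpose of the present theorem is to supply a rigorous substitute via the greedy algorithm, so you cannot invoke \iref{CMsigmaNtisoequid} in its proof.

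The missing idea is to pair your Sobolev-based level bound $N_j \leq C\eta^{-\tau}K^{-jr\tau/d}|f|_{W^{m,\tau}}^\tau$ with the trivial cardinality bound $N_j \leq \#(\cD_j) = N_0 K^j$, and to sum $N - N_0 = \sum_{j\geq 1} N_j$ using the \emph{minimum} of the two. For coarse levels $j\leq j_0$ the trivial bound is tighter; for fine levels $j>j_0$ the Sobolev bound is. Balancing at the crossover $j_0$ (where $N_0 K^{j_0}\sim C\eta^{-\tau}K^{-j_0 r\tau/d}|f|_{W^{m,\tau}}^\tau$) yields
\[
N-N_0 \;\leq\; C\,\eta^{-\frac{d\tau}{d+r\tau}}\,|f|_{W^{m,\tau}}^{\frac{d\tau}{d+r\tau}}
\;=\; C\,\eta^{-\frac{dp}{d+mp}}\,|f|_{W^{m,\tau}}^{\frac{dp}{d+mp}},
\]
which inverts (for $N\geq 2N_0$) to the sharp bound $\eta \leq C N^{-1/p-m/d}|f|_{W^{m,\tau}}$. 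Then $e_{m,\cT_N}(f)_p\leq N^{1/p}\eta$ gives \iref{CMerrorisogreedy} directly, with no embedding and no appeal to \iref{CMsigmaNtisoequid}.
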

\begin{proof}
The technique used for proving this result is adapted
from the proof of a similar result for tree-structured 
wavelet approximation in \cite{CMcddd}.
We define
\be
\eta:=\max_{T\in\cT_N}e_{m,T}(f)_p, 
\label{CMmaxlocalerr}
\ee
so that we obviously have when $p<\infty$,
\be
e_{m,\cT_N}(f)_p\leq N^{1/p}\eta.
\label{CMtrivialglobalgreedy}
\ee
For $T\in \cT_N\sm \cD_0$, we denote by $P(T)$ its parent
in the refinement process. From the definition of the algorithm, we necessarily have
$$
\eta \leq e_{m,P(T)}(f)_p,
$$
and therefore, using \iref{CMlocalinvsr} with $s=m$, we obtain 
\be
\eta \leq  Ch_{P(T)}^{r} |f|_{W^{s,\tau}(P(T))},
\label{estimbeloweta}
\ee
with $r:=\frac d p-\frac d \tau + m> 0$. We next denote by $\cT_{N,j}:=\cT_N\cap \cD_j$ the
elements of generation $j$ in $\cT_N$ and define $N_j:=\#(\cT_{N,j})$.
We estimate $N_j$ by taking the $\tau$ power of \iref{estimbeloweta}
and summing over $\cT_{N,j}$ which gives
$$
\begin{array}{ll}
N_j\eta^\tau & \leq C^\tau
\sum_{T\in \cT_{N,j}} h_{P(T)}^{r\tau} |f|_{W^{s,\tau}(P(T))}^\tau\\
& \leq C^\tau (\sup_{T\in \cT_{N,j}} h_{P(T)}^{r\tau}) \sum_{T\in \cT_{N,j}}|f|_{W^{s,\tau}(P(T))}^\tau\\
& \leq KC^\tau (\sup_{T\in \cD_{j-1}} h_T^{r\tau}) |f|_{W^{s,\tau}}^\tau.
\end{array}
$$
Using \iref{quasiunifhier} and the fact that $\#(\cD_j)=N_0K^j$, we thus obtain 
$$
N_j\leq \min\{C\eta^{-\tau}K^{-jr\tau/d} |f|_{W^{s,\tau}}^\tau\, ,\, N_0 K^j\}.
$$
We now evaluate
$$
N-N_0=\sum_{j\geq 1}N_j \leq \sum_{j\geq 1}\min\{C\eta^{-\tau} K^{-jr\tau/d} |f|_{W^{s,\tau}}^\tau\, ,\, N_0 K^j\}.
$$
By introducing $j_0$ the smallest integer such that $C\eta^{-\tau} K^{-jr\tau/d} |f|_{W^{s,\tau}}^\tau\leq N_0K^j$, 
we find that
$$
N-N_0 \leq N_0\sum_{j\leq j_0} K^{j}+C\eta^{-\tau} |f|_{W^{s,\tau}}^\tau\sum_{j> j_0}K^{-jr\tau/d},
$$
which after evaluation of $j_0$ yields
$$
N-N_0 \leq C\eta^{-\frac{d\tau}{d+r\tau}} |f|_{W^{s,\tau}}^{\frac{d\tau}{d+r\tau}}=
C\eta^{-\frac {dp}{d+mp}}|f|_{W^{s,\tau}}^{\frac {dp}{d+mp}},
$$
and therefore, assuming that $N\geq 2 N_0$,
$$
\eta\leq CN^{-1/p-m/d}|f|_{W^{s,\tau}}.
$$
Combining this estimate with \iref{CMtrivialglobalgreedy} gives the announced result.
In the case $p=\infty$, a standard modification of the argument leads to a similar conclusion.  
\hfill $\Box$
\end{proof}

\begin{remark}
By similar arguments, we obtain that if $f\in W^{s,\tau}(\Omega)$ for some $\tau\geq 1$ 
and $0\leq s \leq m$ such that $\frac 1 \tau < \frac 1 p+\frac s d$, we have
$$
e_{m,\cT_N}(f)_p\leq CN^{-s/d}|f|_{W^{s,\tau}}.
$$
The restriction $\tau \geq 1$ may be dropped if we replace $W^{s,\tau}$
by the Besov space $B^s_{\tau,\tau}$, at the price of a more technical proof.
\end{remark}

\begin{remark}
The same approximation results can be obtained if we 
replace $e_{m,T}(f)_p$ in the refinement algorithm by the more computable
quantity $\|f-P_{m,T}f\|_{L^p(T)}$, due to the equivalence \iref{CMequivlocal}.
\end{remark}

\begin{remark}
The greedy refinement algorithm defines a particular sequence of subtrees $\cT_N$
of the master tree $\cM$, but $\cT_N$ is not ensured to be the best
choice in the sense of minimizing the approximation error among all 
subtrees of cardinality at most $N$. The selection of an optimal tree
can be performed by an additional pruning strategy after enough refinement
has been performed. This approach
was developped in the context of statistical estimation under
the acronyme CART (classification and regression tree), see \cite{CMbfos,CMdo}.
Another approach that builds a near optimal subtree only based
on refinement was proposed in \cite{CMbd}.
\end{remark}

\begin{remark}
The partitions which are built by the greedy refinement
algorithm are non-conforming. Additional refinement
steps are needed when the users insists on conformity,
for instance when solving PDE's. For specific refinement
procedures, it is possible to bound the total number
of elements that are due to additional
conforming refinement by the total number of triangles $T$
which have been refined due to the fact that 
$e_{m,T}(f)_T$ was the largest at some stage of the algorithm,
up to a fixed multiplicative constant. In turn, the convergence
rate is left unchanged compared to the original non-conforming
algorithm. This fact was proved in \cite{CMbdd} for adaptive
triangulations built by the rule of newest vertex bisection. 
A closely related concept is the amount of
additional elements which are needed in order to impose that
the partition satisfies a {\it grading property}, in the sense
that two adjacent elements may only differ by one 
refinement level. For specific partitions, it was proved
in \cite{CMdahpart} that this amount is bounded up to
a fixed multiplicative constant the number of elements contained in the 
non-graded partitions.
Figure \ref{CMisopartref}
displays the conforming and graded partitions obtained by the minimal amount of 
additional refinement from the partitions of Figure \ref{CMisopart}.
\end{remark}

\begin{figure}
\centerline{
\includegraphics[width=4cm,height=4cm]{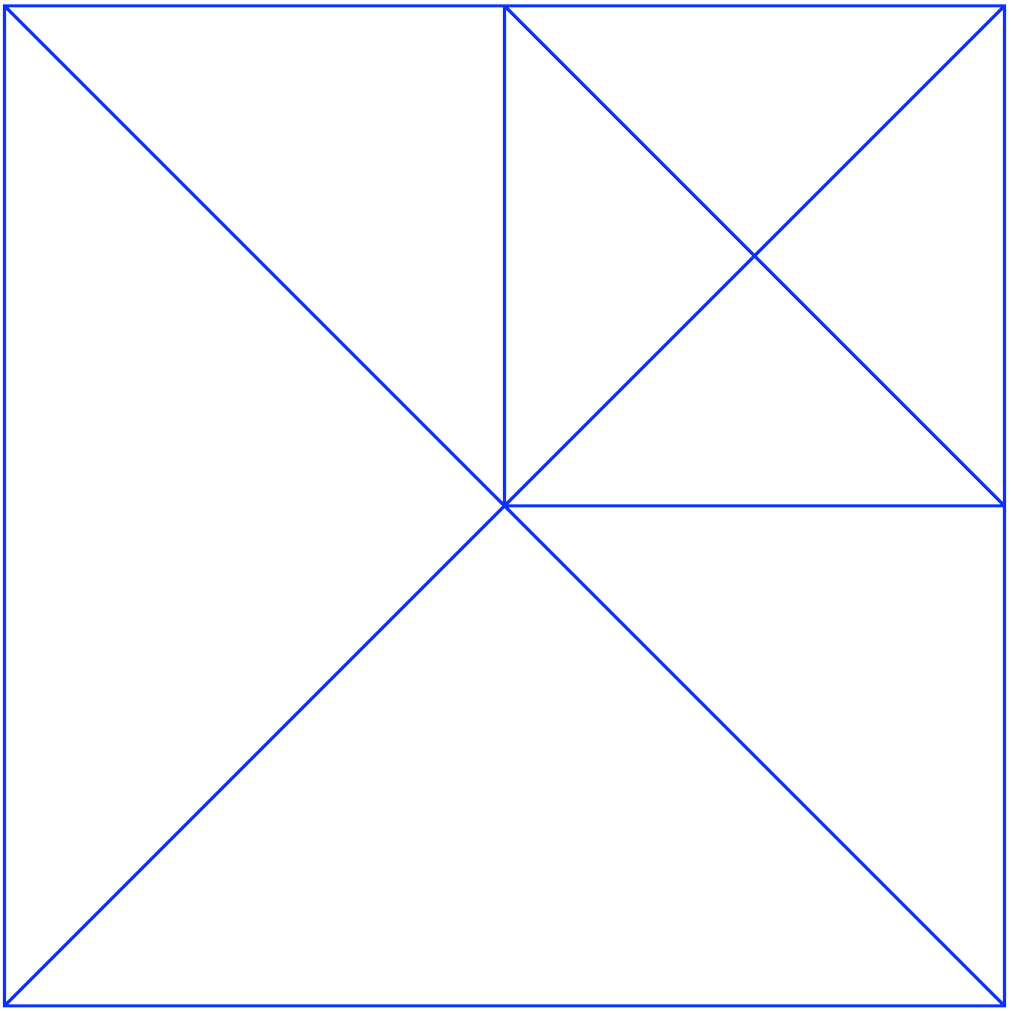}
\hspace{1cm}
\includegraphics[width=4cm,height=4cm]{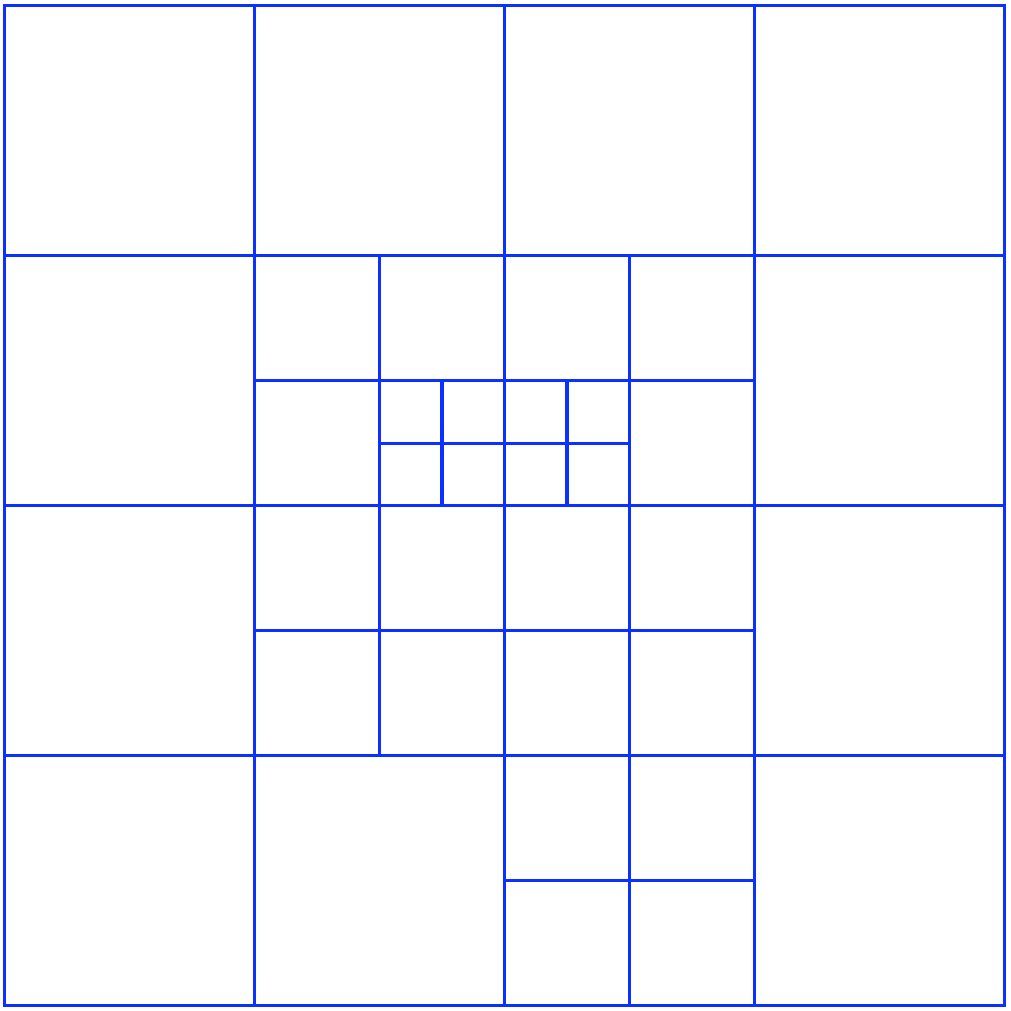}
}
\caption{Conforming refinement (left)
and graded refinement (right)}
\label{CMisopartref}
\end{figure}

The refinement algorithm may also be applied to
discretized data, such as numerical images.
The approximated $512\times 512$ image is displayed
on Figure \ref{CMfigimage} together with its approximation
obtained by the refinement algorithm
based on newest vertex bisection and the
error measured in $L^2$, using $N=2000$ triangles.
In this case, $f$ has the form of a discrete array of pixels,
and the $L^2(T)$-orthogonal projection is replaced
by the $\ell^2(S_T)$-orthogonal projection,
where $S_T$ is the set of pixels with centers contained in $T$.
The use of adaptive isotropic partitions has strong
similarity with wavelet thresholding \cite{CMde,CMco}.
In particular, it results in ringing artifacts near the edges.

\begin{figure}[htbp]
\centerline{
\includegraphics[width=5cm,height=5cm]{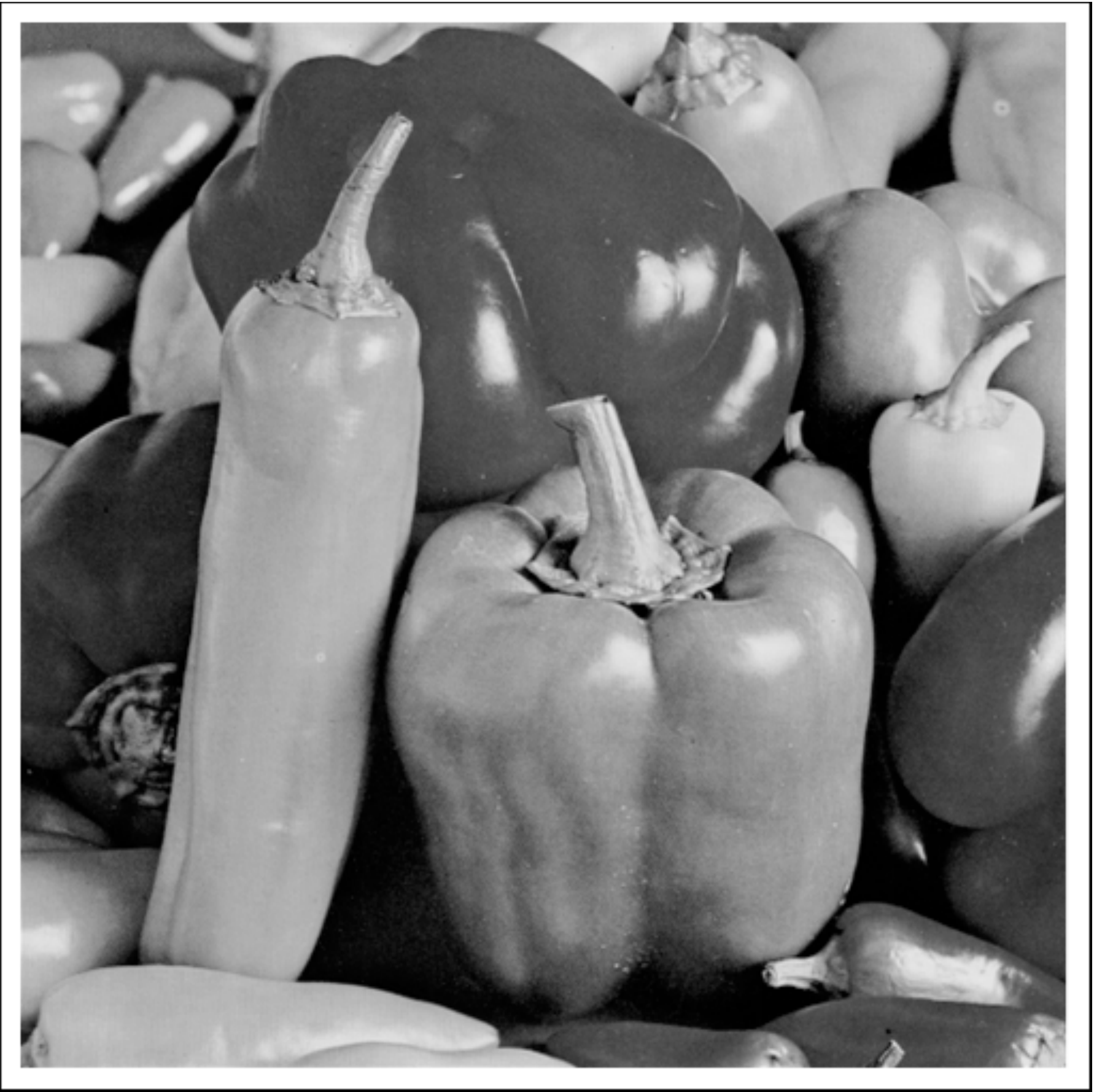}
\hspace{1cm}
\includegraphics[width=5cm,height=5cm]{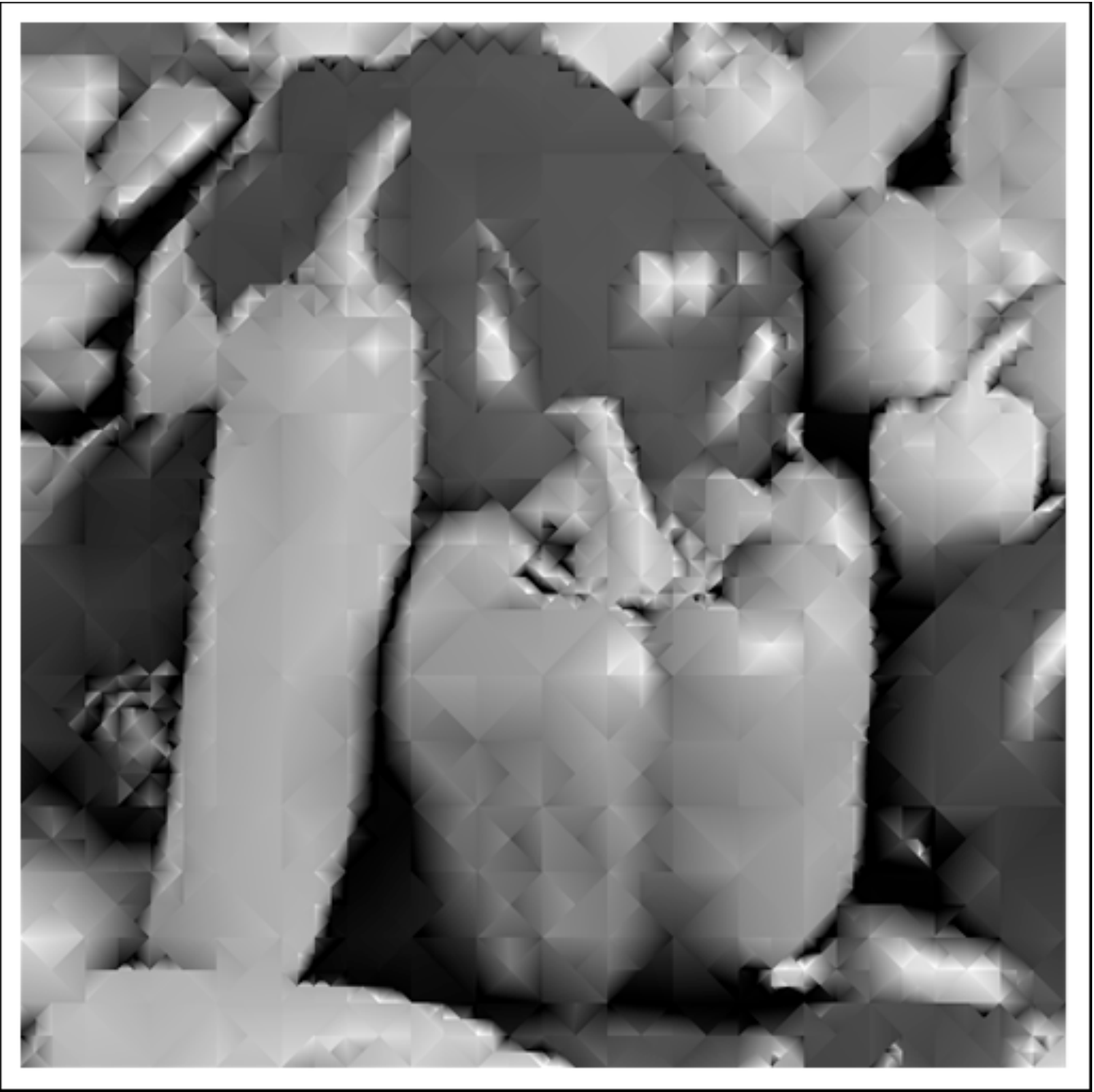}
}
\caption{The image `peppers'' (left) and its approximation by $2000$ isotropic triangles 
obtained by the greedy algorithm (right).}
\label{CMfigimage}
\end{figure}

\subsection{The case of smooth functions.}

Although the estimate \iref{CMsigmaNtisoequid} 
might not be achievable for a general $f\in W^{m,\tau}(\Omega)$, 
we can show that for smooth enough $f$, the numerical
quantity that governs the rate of convergence $N^{-\frac n d}$
is exactly $|f|_{W^{m,\tau}}:=\(\sum_{|\alpha|=m} \|\partial^\alpha f\|_{L^\tau}^\tau\)^{1/\tau}$
that we may define as so even for $\tau<1$.
For this purpose, we assume that $f\in C^m(\Omega)$.
Our analysis is based on the fact that such
a function can be locally approximated by a polynomial
of degree $m$. 

We first study in more detail the
approximation error on a function $q\in \PP_m$.
We denote by $\H_m$ the space of {\it homogeneous
polynomials} of degree $m$. To $q\in \PP_m$, we
associate its homogeneous part $\bq\in \H_m$,
which is such that
$$
q-\bq\in \PP_{m-1}.
$$
We denote by $\bq_\alpha$ the coefficient of $\bq$ associated
to the multi-index $\alpha=(\alpha_1,\cdots,\alpha_d)$ with $|\alpha|=m$.
We thus have
$$
e_{m,T}(q)_p=e_{m,T}(\bq)_p.
$$
Using the affine transformation $A_T$ which maps the reference element
$R$ onto $T$, and denoting by $B_T$ its linear part, we can write 
$$
e_{m,T}(\bq)_p=\(\frac {|T|}{|R|}\)^{1/p}e_{R,m}(\bq\circ A_T)_p=\(\frac {|T|}{|R|}\)^{1/p}
e_{m,R}(\t \bq)_p, \;\; \t\bq:=\bq\circ B_T\in\H_m
$$
where we have used the fact that $\t\bq-\bq\circ A_TÊ\in \PP_{m-1}$.
Introducing for any $r>0$ the quasi-norm on $\H_m$
$$
|\bq|_r:=\(\sum_{|\alpha|=m} |\bq_\alpha|^r\)^{1/r},
$$
one easily checks that
$$
C^{-1}\|B_T^{-1}\|^{-m} |\bq|_r\leq |\t \bq|_r\leq C\|B_T\|^m |\bq|_r,
$$
for some constant $C>0$ that only depends on $m$, $r$ and $R$.
We then remark that $e_{R,m}(\bq)_p$ is a norm on $\H_m$,
which is equivalent to $|\bq|_r$ since $\H_m$ is finite dimensional.
It follows that there exists constants $0<C_1\leq C_2$ such that
for all $q$ and $T$
$$
C_1 |T|^{1/p}\|B_T^{-1}\|^{-m} |\bq|_r   \leq e_{m,T}(q)_p\leq C_2|T|^{1/p}\|B_T\|^m |\bq|_r.
$$
Finally, using the bound \iref{CMboundbt} for $\|B_T\|$ 
and its symmetrical counterpart 
$$
\|B_T^{-1}\| \leq \frac {h_R}{\rho_T},
$$
together with the isotropy restriction \iref{isotropy}, we obtain with $\frac 1 \tau:=\frac 1 p+\frac m d$ 
the equivalence
$$
C_1 |T|^{\tau} |\bq|_r   \leq e_{m,T}(q)_p\leq C_2|T|^{\tau} |\bq|_r, 
$$
where $C_1$ and $C_2$ only depend on $m$, $R$ and the
constant $C$ in \iref{isotropy}. Choosing $r=\tau$ this equivalence 
can be rewritten as
$$
C_1 \(\sum_{|\alpha|=m} \|\bq_\alpha\|_{L^\tau(T)}^\tau \)^{1/\tau }   \leq e_{m,T}(q)_p\leq C_2
\(\sum_{|\alpha|=m} \|\bq_\alpha\|_{L^\tau(T)}^\tau \)^{1/\tau }.
$$
Using shorter notations, this is summarized by the following result.
\begin{lemma}
Let $p\geq 1$ and $\frac 1 \tau:=\frac 1 p+\frac m d$.
There exists constant $C_1$ and $C_2$ that only depends
on $m$, $R$ and the constant $C$ in \iref{isotropy} such that
\be
C_1 |q|_{W^{m,\tau}(T)}   \leq e_{m,T}(q)_p\leq C_2
|q|_{W^{m,\tau}(T)},
\label{CMequierrorpoly}
\ee
for all $q\in\PP_m$.
\end{lemma}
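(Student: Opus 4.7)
The plan is to follow the chain of equivalences assembled in the paragraphs preceding the statement and to close the argument by identifying $|T|^{1/\tau}|\bq|_\tau$ with $|q|_{W^{m,\tau}(T)}$. Since $q-\bq \in \PP_{m-1}$, we immediately have $e_{m,T}(q)_p = e_{m,T}(\bq)_p$, so it suffices to work with the homogeneous part $\bq \in \H_m$.

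First I would apply the affine change of variables $A_T : R \to T$ and replace $\bq \circ A_T$ by $\t\bq := \bq\circ B_T \in \H_m$ (the two differ by an element of $\PP_{m-1}$, which does not affect $e_{m,R}$). This gives $e_{m,T}(\bq)_p = (|T|/|R|)^{1/p}\, e_{m,R}(\t\bq)_p$. Next, on the finite-dimensional space $\H_m$, the functional $\bq \mapsto e_{m,R}(\bq)_p$ is a norm (it vanishes only when $\bq=0$, since a nonzero homogeneous polynomial of degree $m$ cannot coincide with an element of $\PP_{m-1}$), hence is equivalent to the quasi-norm $|\cdot|_\tau$ with constants depending only on $m$, $R$, and $\tau$. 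Thus $e_{m,R}(\t\bq)_p \simeq |\t\bq|_\tau$.

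Then I would relate $|\t\bq|_\tau$ to $|\bq|_\tau$. The coefficients of $\t\bq$ depend linearly on those of $\bq$ through the $m$-fold tensor power of $B_T$, so the two-sided estimate already noted in the discussion yields $C^{-1}\|B_T^{-1}\|^{-m}|\bq|_\tau \leq |\t\bq|_\tau \leq C\|B_T\|^m |\bq|_\tau$. Combining with the bounds $\|B_T\|\leq h_T/\rho_R$ and $\|B_T^{-1}\|\leq h_R/\rho_T$ and the isotropy hypothesis \iref{isotropy}, which forces $h_T \simeq \rho_T \simeq |T|^{1/d}$, we obtain $\|B_T\|^m \simeq \|B_T^{-1}\|^{-m} \simeq |T|^{m/d}$ with constants depending only on $m$, $R$ and the isotropy constant. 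Plugging back and using $1/\tau = 1/p + m/d$ gives
\be
C_1 |T|^{1/\tau}|\bq|_\tau \;\leq\; e_{m,T}(q)_p \;\leq\; C_2 |T|^{1/\tau}|\bq|_\tau.
\label{CMeqinterm}
\ee

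Finally I would identify the right-hand side with $|q|_{W^{m,\tau}(T)}$. Since $\partial^\alpha q = \alpha!\,\bq_\alpha$ is a constant on $T$ for each multi-index $\alpha$ with $|\alpha|=m$, one has
$$
|q|_{W^{m,\tau}(T)}^\tau = \sum_{|\alpha|=m}\int_T |\partial^\alpha q(x)|^\tau\, dx
= |T|\sum_{|\alpha|=m}(\alpha!)^\tau|\bq_\alpha|^\tau,
$$
so $|q|_{W^{m,\tau}(T)} \simeq |T|^{1/\tau}|\bq|_\tau$ with constants depending only on $m$ and $\tau$. Inserting this into \iref{CMeqinterm} yields the announced equivalence. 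The only point requiring care is the bookkeeping of constants: the affine-invariance argument produces $|T|^{1/p}$ factors and $\|B_T\|^{\pm m}$ factors, and the isotropy hypothesis is exactly what is needed to convert the latter into $|T|^{m/d}$ on both sides with constants independent of $T$; everything else is a direct consequence of equivalence of norms on the finite-dimensional space $\H_m$.
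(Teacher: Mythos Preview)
Your argument is correct and follows essentially the same route as the paper: reduce to the homogeneous part $\bq$, transfer to the reference element via $A_T$, use equivalence of norms on the finite-dimensional space $\H_m$, control $|\t\bq|_\tau$ in terms of $|\bq|_\tau$ via $\|B_T\|$ and $\|B_T^{-1}\|$, and then invoke isotropy to convert these into powers of $|T|^{1/d}$. The only cosmetic difference is that you make the factorial $\alpha!$ in $\partial^\alpha q = \alpha!\,\bq_\alpha$ explicit, whereas the paper simply absorbs it into the constants.
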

In what follows, we shall frequently identify the $m$-th order derivatives
of a function $f$ at some point $x$ with an homogeneous
polynomial of degree $m$. In particular we write
$$
|d^mf(x)|_r:=\(\sum_{|\alpha|=m}|\partial^\alpha f(x)|^r\)^{1/r}.
$$
We first establish a lower estimate on $\sigma_N(f)$,
which reflects the saturation rate $N^{-m/d}$ of the method,
under a slight restriction on the set
$\cA_N$ of admissible partitions, assuming that
the diameter of all elements decreases 
as $N\to +\infty$, according to 
\be
\max_{T\in \cT_N} h_T \leq AN^{-1/d},
\label{CMunidecaydiam}
\ee
for some $A>0$ which may be arbitrarily large.
\begin{theorem}
\label{CMlowerisotheo}
Under the restriction {\rm \iref{CMunidecaydiam}},
there exists a constant $c>0$ that only depends on $m$, $R$ and the
constant $C$ in \iref{isotropy} such that 
\be
\liminf_{N\to +\infty} \; N^{m/d}\sigma_N(f)_p \geq  c|f|_{W^{m,\tau}}
\label{CMliminfiso}
\ee
for all $f\inÊC^m(\Omega)$, where  $\frac 1 \tau:=\frac 1 p+\frac m d$.
\end{theorem}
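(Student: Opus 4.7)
The argument follows the pattern of the one-dimensional saturation bound \iref{CMsaturationpwconstunif}: on each element of $\cT_N$ compare $f$ with its Taylor polynomial, apply the polynomial equivalence established in the preceding Lemma, and aggregate the local bounds.

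\emph{Local Taylor comparison.} For each $T\in\cT_N$ pick a point $x_T\in T$ and let $q_T\in\PP_m$ be the degree-$m$ Taylor polynomial of $f$ at $x_T$. Since $f\in C^m$ on the compact set $\o\Omega$, the modulus of continuity
$\omega(\eta):=\sup\{|\partial^\alpha f(x)-\partial^\alpha f(y)|:x,y\in\o\Omega,\;|x-y|\leq\eta,\;|\alpha|=m\}$
tends to $0$ as $\eta\to 0$. Taylor's remainder formula gives $\|f-q_T\|_{L^\infty(T)}\leq C h_T^m\omega(h_T)$, and using the isotropy restriction \iref{isotropy} to convert $h_T^m$ into $|T|^{m/d}$ one obtains
$$
e_{m,T}(f-q_T)_p\leq \|f-q_T\|_{L^p(T)} \leq C\,|T|^{1/\tau}\omega(h_T),\qquad \tfrac{1}{\tau}=\tfrac{1}{p}+\tfrac{m}{d}.
$$

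\emph{Local lower bound and global assembly.} The $m$-th order derivatives of $q_T$ are the constants $\partial^\alpha f(x_T)$, hence $|q_T|_{W^{m,\tau}(T)}=|T|^{1/\tau}|d^mf(x_T)|_\tau$. The preceding Lemma applied to $q_T$ together with the triangle inequality \iref{trierror} in the form $e_{m,T}(f)_p\geq e_{m,T}(q_T)_p - e_{m,T}(f-q_T)_p$ yields
$$
e_{m,T}(f)_p\geq |T|^{1/\tau}\bigl(C_1|d^mf(x_T)|_\tau-C\,\omega(h_T)\bigr).
$$
Hypothesis \iref{CMunidecaydiam} forces $\omega(h_T)\leq\omega(AN^{-1/d})=:\varepsilon_N\to 0$ uniformly in $T$. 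Raising to the $\tau$-th power (absorbing the subadditivity constant of $(a+b)^\tau$, valid for all $\tau>0$) and summing over $T\in\cT_N$,
$$
\sum_{T\in\cT_N}e_{m,T}(f)_p^\tau + C\varepsilon_N^\tau|\Omega|\;\geq\; c\sum_{T\in\cT_N}|T|\,|d^mf(x_T)|_\tau^\tau.
$$
Continuity of $|d^mf(\cdot)|_\tau^\tau$ on $\o\Omega$, combined with the uniform bound $h_T\leq AN^{-1/d}$, makes the right-hand sum a Riemann sum converging to $c\int_\Omega|d^mf|_\tau^\tau = c|f|_{W^{m,\tau}}^\tau$ as $N\to\infty$. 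Finally, since $\tau\leq p$, the monotonicity of $\ell^r$ averages in $r$ gives
$$
\Bigl(\sum_T e_{m,T}(f)_p^p\Bigr)^{1/p}\geq N^{1/p-1/\tau}\Bigl(\sum_T e_{m,T}(f)_p^\tau\Bigr)^{1/\tau}=N^{-m/d}\Bigl(\sum_T e_{m,T}(f)_p^\tau\Bigr)^{1/\tau},
$$
with the analogue $\max_T a_T\geq N^{-1/\tau}(\sum_T a_T^\tau)^{1/\tau}$ handling the case $p=\infty$. Taking $\liminf$ delivers \iref{CMliminfiso}.

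\emph{Main obstacle.} The delicate point is not the local analysis but the justification that the adaptive Riemann sum $\sum_T|T|\,|d^mf(x_T)|_\tau^\tau$ converges to $\int_\Omega|d^mf|_\tau^\tau$: the elements of $\cT_N$ may have widely disparate sizes, so the standard Riemann-sum arguments for quasi-uniform partitions do not apply. This is exactly why the diameter hypothesis \iref{CMunidecaydiam} is imposed: it forces every $h_T$ to vanish uniformly as $N\to\infty$, turning the uniform continuity of $d^mf$ on $\o\Omega$ into a per-element error bounded by $|T|$ times a quantity $o(1)$ independent of $T$, and thus into a global error $o(1)\cdot|\Omega|$.
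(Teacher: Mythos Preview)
Your proof is correct and follows essentially the same route as the paper: local Taylor comparison, the polynomial equivalence lemma for the lower bound on $e_{m,T}(q_T)_p$, uniform control of the remainder via \iref{CMunidecaydiam}, and a H\"older/power-mean inequality to pass from the $\ell^\tau$ aggregate to the $\ell^p$ global error. The only cosmetic difference is that the paper works with the exact local seminorms $|f|_{W^{m,\tau}(T)}$, for which $\sum_T |f|_{W^{m,\tau}(T)}^\tau = |f|_{W^{m,\tau}}^\tau$ holds identically, whereas you use the pointwise values $|d^m f(x_T)|_\tau$ and then invoke a Riemann-sum argument; this makes the paper's version slightly cleaner (your ``main obstacle'' simply does not arise there), but the two are equivalent once \iref{CMunidecaydiam} is in force.
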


\begin{proof}
If $f\in C^m(\Omega)$ and $x\in \Omega$, we denote by $q_x$ the 
Taylor polynomial of order $m$ at the point $x=(x_1,\cdots,x_d)$:
\be
q_x(y)=q_x(y_1,\cdots,y_d):=\sum_{|\alpha|\leq m} \frac 1 {|\alpha| !} \partial^\alpha f(x)(y_1-x_1)^{\alpha_1}\cdots (y_d-x_d)^{\alpha_d}.
\label{CMtaylorx}
\ee
If $\cT_N$ is a partition in $\cA_N$, we may write for each 
element $T\in \cT_N$ and $x\in T$
$$
\begin{array}{ll}
e_{m,T}(f)_p& \geq e_{m,T}(q_{x}) _p-\|f-q_{x}\|_{L^p(T)} \\
& \geq C_1 |q_{x}|_{W^{m,\tau}(T)}-\|f-q_{x}\|_{L^p(T)}\\
& \geq c |f|_{W^{m,\tau}(T)}-C_1|f-q_{x}|_{W^{m,\tau}(T)}-\|f-q_{x}\|_{L^p(T)},
\end{array}
$$
with $c:= C_1\min\{1,\tau\}$,
where we have used the lower bound in \iref{CMequierrorpoly}
and the quasi-triangle inequality 
$$
\|u+v\|_{L^\tau}\leq \max\{1,\tau^{-1}\}(\|u\|_{L^\tau}+\|v\|_{L^\tau}).
$$
By the continuity of the $m$-th order derivative of $f$,
we are ensured that for all $\e>0$ there exists $\delta>0$ such that 
\be
|x-y|\leq \delta \Rightarrow |f(y)-q_x(y)|\leq \e |x-y|^m \;\;{\rm and}\;\;
|d^mf(y)-d^mq_x|_\tau \leq \e.
\label{CMcontmder}
\ee
Therefore if $N\geq N_0$ such that $AN_0^{-1/d}\leq \delta$, we 
have
$$
\begin{array}{ll}
e_{m,T}(f)_p & \geq c |f|_{W^{m,\tau}(T)}- (C_1\e |T|^{1/\tau}+\e h_T^m |T|^{1/p})\\
& \geq c |f|_{W^{m,\tau}(T)}-(1+C_1)\e h_T^{m+d/p}\\
& \geq c |f|_{W^{m,\tau}(T)}- C\e N^{-1/\tau},
\end{array}
$$
where the constant $C$ depends on $C_1$ in \iref{CMequierrorpoly} and $A$
in \iref{CMunidecaydiam}. Using triangle inequality, 
it follows that
$$
e_{m,\cT_N}(f)_p =\(\sum_{T\in\cT}e_{m,T}(f)_p^p\)^{1/p}Ê
\geq c\(\sum_{T\in\cT}|f|_{W^{m,\tau}(T)}^p\)^{1/p}-C\e N^{-m/d}.
$$
Using H\"older's inequality, we find that
\be
|f|_{W^{m,\tau}}=\(\sum_{T\in\cT}|f|_{W^{m,\tau}(T)}^\tau \)^{1/\tau}\leq N^{m/d}
\(\sum_{T\in\cT}|f|_{W^{m,\tau}(T)}^p\)^{1/p},
\label{CMholder}
\ee
which combined with the previous estimates shows that
$$
N^{m/d} e_{m,\cT_N}(f)_p \geq c|f|_{W^{m,\tau}}-C\e.
$$
Since $\e>0$ is arbitrary this concludes the proof. \hfill $\Box$
\end{proof}

\begin{remark}
The H\"older's inequality \iref{CMholder} becomes an equality if and only if all quantities
in the sum are equal,  which justifies the error equidistribution principle
since these quantities are approximations of 
$e_{m,T}(f)_p$.
\end{remark}

We next show that if $f\in C^m(\Omega)$, the
adaptive approximations obtained by
the greedy refinement algorithm introduced in \S\ref{CMsecgreedyiso}
satisfy an upper estimate which closely 
matches the lower estimate \iref{CMliminfiso}. 

\begin{theorem}
\label{CMupperisotheo}
There exists a constant $C$ that only depends on 
$m$, $R$ and on the choice of the
hierarchy $(\cD_j)_{j\geq 0}$ such that
for all $f\in C^m(\Omega)$, the partitions $\cT_N$ obtained
by the greedy algorithm satisfy.
\be
\limsup_{N\to +\infty} \; N^{m/d}e_{m,\cT_N}(f)_p \leq  C|f|_{W^{m,\tau}},
\label{CMlimsupisogreedy}
\ee
where  $\frac 1 \tau:=\frac 1 p+\frac m d$. In turn, for
adaptively refined partitions, we have
\be
\limsup_{N\to +\infty} \; N^{m/d}
\sigma_N(f)_p \leq  C|f|_{W^{m,\tau}},
\label{CMlimsupsigmaNisogreedy}
\ee
for all $f\in C^m(\Omega)$.
\end{theorem}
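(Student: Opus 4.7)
The strategy is to combine the monotonicity of the greedy algorithm with the sharp local upper bound available for $C^m$ functions. Since splitting a maximal element produces children whose local errors do not exceed the parent's (approximation on a subdomain can only improve), the sequence $\eta_N := \max_{T \in \cT_N} e_{m,T}(f)_p$ is non-increasing, and thus, for every $T \in \cT_N \setminus \cD_0$ with parent $P(T)$ in the master tree,
$$
\eta_N \leq e_{m,P(T)}(f)_p.
$$
The plan is to bound the right-hand side by a pointwise Taylor surrogate, sum in $\ell^\tau$, and conclude via $e_{m,\cT_N}(f)_p \leq N^{1/p}\eta_N$ together with the identity $1/p - 1/\tau = -m/d$. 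The statement \iref{CMlimsupsigmaNisogreedy} for $\sigma_N$ follows at once from $\sigma_N(f)_p \leq e_{m,\cT_N}(f)_p$.

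Given $\mu>0$, uniform continuity of $d^m f$ provides $\delta>0$ such that \iref{CMcontmder} holds with $\e$ replaced by $\mu$. For any element $E$ with $h_E\leq \delta$ and $x\in E$, the lemma following \iref{CMequierrorpoly} applied to the Taylor polynomial $q_x$ of \iref{CMtaylorx} (whose $m$-th derivatives are the constants $\partial^\alpha f(x)$, so that $|q_x|_{W^{m,\tau}(E)}=|d^m f(x)|_\tau |E|^{1/\tau}$), together with the triangle inequality \iref{trierror}, the Taylor remainder $\|f-q_x\|_{L^p(E)}\leq \mu h_E^m |E|^{1/p}$, and the isotropy estimate $h_E^m|E|^{1/p}\leq C|E|^{1/\tau}$, yields
$$
e_{m,E}(f)_p \leq C\bigl(|d^m f(x)|_\tau + \mu\bigr)|E|^{1/\tau}.
$$
Apply this with $E=P(T)$, $x=x_T\in T\subset P(T)$, and use $|P(T)|\leq K|T|$: whenever $h_{P(T)}\leq \delta$,
$$
\eta_N \leq C\bigl(|d^m f(x_T)|_\tau+\mu\bigr)|T|^{1/\tau}.
$$
The remaining elements --- those with $h_{P(T)}>\delta$ --- have cardinality at most $KM_\delta$ uniformly in $N$, where $M_\delta:=\#\{T\in \cM : h_T>\delta\}$ is finite since diameters decay geometrically with the generation.

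Raising to the $\tau$-th power and summing over the at least $N-N_0-KM_\delta$ good elements gives
$$
(N-N_0-KM_\delta)\,\eta_N^\tau \leq C\sum_T \bigl(|d^m f(x_T)|_\tau^\tau+\mu^\tau\bigr)|T|.
$$
Integrating the pointwise bound $|d^m f(x_T)-d^m f(y)|_\tau\leq \mu$ over $y\in T$ (and using convexity or subadditivity of $s\mapsto s^\tau$ according to $\tau\geq 1$ or $\tau<1$) yields $|d^m f(x_T)|_\tau^\tau |T|\leq C(|f|_{W^{m,\tau}(T)}^\tau+\mu^\tau|T|)$, and since the $T$'s are pairwise disjoint, the right-hand side is dominated by $C(|f|_{W^{m,\tau}}^\tau+\mu^\tau|\Omega|)$. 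For $N\geq 2(N_0+KM_\delta)$ this delivers $\eta_N\leq CN^{-1/\tau}(|f|_{W^{m,\tau}}+\mu)$, and hence
$$
e_{m,\cT_N}(f)_p\leq N^{1/p}\eta_N\leq CN^{-m/d}\bigl(|f|_{W^{m,\tau}}+\mu\bigr).
$$
Taking the limsup in $N$ and then sending $\mu\to 0^+$ establishes \iref{CMlimsupisogreedy}.

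The delicate step is precisely the passage from the greedy inequality $\eta_N\leq e_{m,P(T)}(f)_p$ to a bound additive over the children. In the proof of Theorem \ref{CMtheoisogreedy} the corresponding difficulty was resolved by a level-by-level geometric summation, whose convergence crucially relied on $r:=d/p-d/\tau+m>0$; here $r=0$ precisely, and a generic $W^{m,\tau}$-type bound on the parents would only produce a spurious $\log N$ factor. The $C^m$ hypothesis removes this obstruction by replacing the local parent norm by the pointwise Taylor surrogate $|d^m f(x_T)|_\tau |P(T)|^{1/\tau}$, whose contributions to the global $W^{m,\tau}$ semi-norm are genuinely additive across the partition rather than overlapping.
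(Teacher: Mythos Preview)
Your argument is correct and follows essentially the same approach as the paper's proof: both hinge on the greedy inequality $\eta_N\leq e_{m,P(T)}(f)_p$, bound the parent error by the Taylor surrogate via the upper half of \iref{CMequierrorpoly}, discard the finitely many elements with large parents, and sum in $\ell^\tau$ over the partition. The only cosmetic differences are that the paper counts the bad elements by a volume argument ($\#\{T:h_{P(T)}\geq\delta\}\leq |\Omega|/(c\delta^d)$) rather than via the master tree, and passes to $|f|_{W^{m,\tau}(T)}$ before summing rather than after.
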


\begin{proof}
For any $\e>0$, we choose $\delta>0$ such that 
\iref{CMcontmder} holds.
We first remark that there exists $N(\delta)$ sufficiently large
such that for any $N\geq N(\delta)$ at least $N/2$ elements $T\in\cT_N$
have parents with diameter $h_{P(T)} \leq \delta$. Indeed, the 
uniform isotropy of the elements
ensures that
$$
|T|\geq ch_{P(T)}^d,
$$
for some fixed constant $c>0$. We thus have
$$
\#\{T\in\cT_N\; ; \; h_{P(T)} \geq \delta\}  \leqÊ\frac {|\Omega|}{c \delta^d},
$$
and the right-hand side is less than $N/2$ for large enough $N$.
We denote by $\t \cT _N$ the subset of $T\in \cT_N$
such that $h_{P(T)} \leq \delta$. Defining $\eta$
as previously by \iref{CMmaxlocalerr},
we observe that for all $T\in \t \cT_N \sm \cD_0$, we have
\be
\eta \leq e_{m,P(T)}(f)_p.
\label{CMlocalerroreta}
\ee
If $x$ is any point contained in $T$
and $q_{x}$ the Taylor polynomial of $f$
at this point defined by \iref{CMtaylorx}, we have
$$
\begin{array}{ll}
e_{m,P(T)}(f)_p & \leq e_{m,P(T)}(q_{x})_p+\|f-q_{x}\|_{L^p(P(T))} \\
& \leq C_2|q_{x}|_{W^{m,\tau}(P(T))} +\e h_{P(T)}^m|P(T)|^{1/p}\\
& \leq C_2\(\frac {|P(T)|}{|T|}\)^{1/\tau}|q_{x}|_{W^{m,\tau}(T)}+\e h_{P(T)}^m|P(T)|^{1/p}\\
& \leq C_2\(\frac {|P(T)|}{|T|}\)^{1/\tau}|f|_{W^{m,\tau}(T)}+
\e D_2\(\frac {|P(T)|}{|T|}\)^{1/\tau}|T|^{1/\tau}
+\e h_{P(T)}^m|P(T)|^{1/p},\\
\end{array}
$$
where $C_2$ is the constant appearing in \iref{CMequierrorpoly}
and $D_2:=C_2\max\{1,1/\tau\}$.
Combining this with \iref{CMlocalerroreta}, we obtain 
that for all $T\inÊ\t\cT_N$,
$$
\etaÊ\leq D (|f|_{W^{m,\tau}(T)}+\e |T|^{1/\tau})
$$
where the constant $D$ depends on $C_2$, $m$
and on the refinement rule defining the hierarchy $(\cD_j)_{j\geq 0}$.
Elevating to the power $\tau$ and summing on all $T\in\t \cT_N$, we
thus obtain 
$$
(N/2-N_0)\eta^\tau \leq \max\{1,\tau\} D^\tau( |f|_{W^{m,\tau}}^\tau +\e^\tau |\Omega|),
$$
where $N_0:=\#(\cD_0)$.
Combining with \iref{CMtrivialglobalgreedy}, we therefore
obtain
$$
e_{m,\cT_N}(f)_p\leq D\max\{\tau^{\frac 1 \tau},1/\tau\}
N^{1/p}(N/2-N_0)^{-1/\tau}(|f|_{W^{m,\tau}} +\e |\Omega|^{1/\tau}).
$$
Taking $N>4N_0$ and remarking that 
$\e>0$ is arbitrary, we conclude that
\iref{CMlimsupsigmaNisogreedy} holds
with $C=4^{1/\tau}D\max\{\tau^{\frac 1 \tau},1/\tau\}$. \hfill $\Box$
\end{proof}
\noindent
\nl
Theorems \ref{CMlowerisotheo} and \ref{CMupperisotheo} reveal that for smooth enough
functions, the numerical quantity that governs the rate of convergence $N^{-m/d}$
in the $L^p$ norm of piecewise polynomial approximations on adaptive isotropic partitions
is exactly $|f|_{W^{m,\tau}}$. In a similar way one would 
obtain that the same rate for quasi-uniform partitions is governed by
the quantity $|f|_{W^{m,p}}$. Note however that these results are 
of asymptotic nature since they involve $\limsup$ and $\liminf$ as $N\to +\infty$,
in contrast to Theorem \ref{CMtheoisogreedy}. The results dealing
with piecewise polynomial 
approximation on anisotropic adaptive partitions that we present in the
next sections are of a similar asymptotic nature.

\section{Anisotropic piecewise constant approximation on rectangles}

We first explore a simple case of 
adaptive approximation on anisotropic partitions
in two space dimensions. More precisely, we
consider piecewise constant approximation
in the $L^p$ norm
on adaptive partitions by rectangles
with sides parallel to the $x$ and $y$ axes.
In order to build such partitions, 
$\Omega$ cannot be any polygonal domain,
and for the sake of simplicity we fix it to be the unit square:
$$
\Omega=[0,1]^2.
$$
The family $\cA_N$ consists therefore of all partitions of $\Omega$
of at most $N$ rectangles of the form
$$
T=I\times J,
$$
where $I$ and $J$ are intervals contained in $[0,1]$. This type
of adaptive anisotropic partitions suffers from a strong coordinate
bias due to the special role of the $x$ and $y$ direction:
functions with sharp transitions on line edges are better
approximated when these eges are parallel to the $x$ and $y$ axes.
We shall remedy this defect in \S 5 by considering adaptive
piecewise polynomial approximation on anisotropic partitions 
consisting of triangles, or simplices in higher dimension. Nevertheless,
this first simple example is already instructive. In particular,
it reveals that the numerical quantity governing the
rate of approximation has an inherent non-linear structure.
Throughout this section, we assume that $f$ belongs to $C^1([0,1]^2)$.

\subsection{A heuristic estimate}

We first establish an error estimate which is based on the heuristic
assumption that the partition is sufficiently fine so that we
may consider that $\nabla f$ is constant on each $T$,
or equivalently $f$ coincides with an affine function $q_T\in \PP_1$
on each $T$. We thus first study the local $L^p$ approximation error 
on $T=I\times J$ for an affine function of the form
$$
q(x,y)=q_0+q_x x+ q_y y.
$$
Denoting by $\bq(x,y):=q_x x+ q_y y$ the homogeneous linear part of $q$,
we first remark that 
\be
e_{1,T}(q)_p=e_{1,T}(\bq)_p,
\label{CMqbq}
\ee
since $q$ and $\bq$ differ by a constant. We thus concentrate on
$e_{1,T}(\bq)_p$ and discuss the shape of $T$ that minimizes this 
error when the area $|T|=1$ is prescribed. We associate to
this optimization problem a function $K_{p}$ that acts on the space of linear functions 
according to
\be
K_{p}(\bq)=\inf_{|T|=1}e_{1,T}(\bq)_p.
\label{CMshapoptrect1}
\ee
As we shall explain further, the above infimum may or may not be attained.

We start by some observations that can be derived
by elementary change of variable. If $a+T$ is a translation of $T$, then
\be
e_{1,a+T}(\bq)_p=e_{1,T}(\bq)_p
\label{CMinvartransrect}
\ee 
since $\bq$ and $\bq(\cdot-a)$ differ by a constant.
Therefore, if $T$ is a minimizing rectangle in
\iref{CMshapoptrect1}, then $a+T$ is also one. If $hT$ is a dilation of $T$, then 
\be
e_{1,hT}(\bq)_p= h^{2/p+1}e_{1,T}(\bq)_p
\label{CMinvarhomrect}
\ee
Therefore, if we are interested in minimizing the error 
for an area $|T|=A$, we find that
\be
\inf_{|T|=A}e_{1,T}(q)_p=A^{1/\tau}K_{p}(\bq),\;\;    \frac 1 \tau:=\frac 1  p+\frac 1 2
\label{CMshapoptrect}
\ee
and the minimizing rectangles for \iref{CMshapoptrect}
are obtained by rescaling the minimizing rectangles for \iref{CMshapoptrect1}.

In order to compute $K_{p}(\bq)$, we thus consider a rectangle $T=I\times J$
of unit area which barycenter is the origin.
In the case $p=\infty$, using the notation $X:=|q_x|\,|I|/2$ and $Y:=|q_y|\,|J|/2$,
we obtain
$$
e_{1,T}(\bq)_\infty=X+Y.
$$
We are thus interested in the minimization of
the function $X+Y$ 
under the constraint $XY=|q_x q_y|/4$.
Elementary computations show that when $q_xq_y\neq 0$, the infimum
is attained when $X=Y=\frac 1 2\sqrt {|q_y q_x|}$ which yields
$$
|I|=\sqrt {\frac {|q_y|}  {|q_x|}}\;\; {\rm and }\;\; |J|=\sqrt {\frac {|q_x|}  {|q_y|}}.
$$
Note that the optimal aspect ratio is given by the simple relation
\be
\frac {|I|}{|J|}= \frac {|q_y|}{|q_x|},
\label{CMoptiaspectrect}
\ee
which expresses the intuitive fact that the refinement should be
more pronounced in the direction where the function
varies the most. Computing $e_{1,T}(q)_\infty$ for such an optimized rectangle, we find that 
\be
K_{\infty}(\bq)= \sqrt {|q_y q_x|}.
\label{CMlocalrectinf1}
\ee
In the case $p=2$, we find that
$$
\begin{array}{ll}
e_{1,T}(\bq)_2^2&=\int_{-|I|/2}^{|I|/2}\int_{-|J|/2}^{|J|/2} |q_x x+q_y y|^2 dy \, dx \\
&=\int_{-|I|/2}^{|I|/2}\int_{-|J|/2}^{|J|/2} (q_x^2 x^2+q_y^2 y^2+ 2q_xq_y xy) dy \, dx \\
&=4\int_{0}^{|I|/2}\int_{0}^{|J|/2} (q_x^2 x^2+q_y^2 y^2) dy \, dx \\
&= \frac 4 3 (q_x^2   (|I|/2)^3|J|/2+q_y^2 (|J|/2)^3 |I|/2 )\\
& = \frac 1 {3}(X^2+Y^2),
\end{array}
$$
where we have used the fact that $|I|\, |J|=1$. We now want to
minimize 
the function $X^2+Y^2$ 
under the constraint $XY=|q_x q_y|/4$.
Elementary computations again show that when $q_xq_y\neq 0$, the infimum
is again attained when $X=Y=\frac 1 2\sqrt {|q_y q_x|}$,
and therefore leads to the same aspect ratio given by
\iref{CMoptiaspectrect}, and the value
\be
K_{2}(\bq)=\frac 1{\sqrt 6} \sqrt {|q_x q_y|}.
\label{CMlocalrectinf2}
\ee
For other values of $p$ the computation of
$e_{1,T}(\bq)_p$
is more tedious, but leads to a same conclusion: the optimal
aspect ratio is given by \iref{CMoptiaspectrect} and the function $K_p$ has the general form
\be
K_{p}(\bq)=C_p \sqrt {|q_x q_y|},
\label{CMlocalrect1}
\ee
with $C_p:=\(\frac {2}{(p+1)(p+2)}\)^{1/p}$.
Note that the optimal shape of $T$ does
not depend on the $L^p$ metric in which we measure the error.

By \iref{CMqbq}, \iref{CMinvartransrect} and \iref{CMinvarhomrect}, we find
that for shape-optimized triangles of arbitrary area, the error
is given by
\be
e_{1,T}(q)_p=|T|^{1/\tau}K_{p}(\bq)_p=C_p \sqrt {|q_y q_x|}|T|^{1/\tau},
\label{CMlocalrect}
\ee
Note that $C_p$ is uniformly bounded for all $p\geq 1$. 

In the case where $q\neq 0$ but $q_xq_y=0$, the infimum
in \iref{CMshapoptrect1} is not attained, and the rectangles
of a minimizing 
sequence tend to become infinitely long
in the direction where $q$ is constant. 
We ignore at the moment this degenerate case.

Since we have assumed that $f$ coincides with an affine function on $T$, 
the estimate \iref{CMlocalrect} yields
\be
e_{1,T}(f)_p= C_p\left \| \sqrt {|\partial_x f \partial_y f|}\right \|_{L^\tau(T)}
=\|K_{p}(\nabla f)\|_{L^\tau} ,\;\; \frac 1 \tau:=\frac 1 p +\frac 1 2.
\label{CMlocalrectaniso}
\ee
where we have identifed $\nabla f$ to the linear function 
$(x,y)\mapsto x \partial_x f + y \partial_y f $.
This local estimate should be compared to those
which were discussed in \S 3.1 for isotropic elements: 
in the bidimensional case, the estimate \iref{CMlocalinv}
of Theorem \ref{CMlocalestheo} can be restated as
$$
e_{1,T}(f)_p\leq C \|\nabla f \|_{L^\tau (T)},\; \; \frac 1 \tau:=\frac 1 p +\frac 1 2.
$$
The improvement in \iref{CMlocalrectaniso} comes the fact that
$\sqrt {|\partial_x f \partial_y f|}$ may be substantially smaller than
$|\nabla f|$ when $|\partial_x f|$ and $|\partial_y f|$ have
different order of magnitude which reflects an anisotropic behaviour
for the $x$ and $y$ directions. However, let us keep in mind
that the validity of \iref{CMlocalrectaniso} is only when
$f$ is identified to an affine function on $T$. 

Assume now that the partition $\cT_N$ is built
in such a way that all rectangles have optimal
shape in the above described sense, and obeys
in addition the error equidistribution principle, which 
by \iref{CMlocalrectaniso} means that
$$
\|K_{p}(\nabla f)\|_{L^\tau(T)}=\eta,\;\; T\in\cT_N.
$$
Then, we have on the one hand that
$$
e_{1,\cT_N}(f)_p\leq \eta N^{1/p},
$$
and on the other hand, that
$$
N\eta^\tau \leq  \| K_{p}(\nabla f)\|_{L^\tau}^\tau.
$$
Combining the two above, and using the relation $\frac 1 \tau:=\frac 1 p +\frac 1 2$,
we thus obtain the error estimate
\be
\sigma_N(f)_p \leq N^{-1/2}\| K_{p}(\nabla f)\|_{L^\tau}.
\label{CMidealanirecterr}
\ee
This estimate should be compared  with those which were
discussed in \S 3.2 for adaptive partition with isotropic
elements: for piecewise constant functions on adaptive
isotropic partitions in the two dimensional case, the estimate
\iref{CMsigmaNtisoequid} can be restated as
$$
\sigma_N(f)_p\leq CN^{-1/2}\|\nabla f\|_{L^{\tau}}, \;\; \frac 1 \tau=\frac 1 p+\frac 1 2.
$$
As already observed for local estimates, the improvement
in \iref{CMlocalrectaniso} comes from the fact that 
$|\nabla f|$ is replaced by the possibly much smaller $\sqrt {|\partial_x f \partial_y f|}$.
It is interesting to note that the quantity
$$
A_p(f):=\| K_{p}(\nabla f)\|_{L^\tau}=C_p\left \| \sqrt {|\partial_x f \partial_y f|}\right \|_{L^\tau},
$$ 
is strongly nonlinear in the sense that it does not satisfy for any $f$ and $g$ an
inequality of the type $A_p(f+g)\leq C(A_p(f)+A_p(g))$, even with $C>1$. This
reflects the fact that two functions $f$ and $g$ may be well approximated
by piecewise constants on anisotropic rectangular partitions while their
sum $f+g$ may not be. 

\subsection{A rigourous estimate}

We have used heuristic arguments to derive the estimate
\iref{CMidealanirecterr}, and a simple example shows that 
this estimate cannot hold as such: if $f$ is a non-constant function that
only depends on the variable $x$ or $y$, the quantity $A_p(f)$ vanishes
while the error $\sigma_N(f)_p$ may be non-zero. 
In this section, we prove a valid estimate by a rigourous derivation. The price
to pay is in the asymptotic nature of the new estimate, which has
a form similar to those obtained in \S 3.4.

We first introduce a ``tamed'' variant of the function $K_{p}$, in which
we restrict the search of the infimum to rectangles of limited diameter.
For $M>0$, we define
\be
K_{p,M}(\bq)=\min_{|T|=1,h_T\leq M}e_{1,T}(\bq)_p.
\label{CMshapoptrect1M}
\ee
In contrast to the definition of $K_p$, the above minimum
is always attained, due to the compactness in the Hausdorff
distance of the set of 
rectangles of area $1$, diameter less or equal to $M$, and centered
at the origin. It is also not difficult to check that
the functions $\bq\mapsto e_{1,T}(\bq)_p$ are uniformly
Lipschitz
continuous for all $T$ of area $1$ and diameter less than $M$: 
there exists a constant $C_M$ such that
\be
|e_{1,T}(\bq)_p- e_{1,T}(\t\bq)_p| \leq C_M|\bq-\t \bq|, 
\label{CMkmplip}
\ee
where $|\bq|:=(q_x^2+q_y^2)^{1/2}$. In turn $K_{p,M}$
is also Lipschitz continuous with constant $C_M$.
Finally, it is obvious that
$K_{p,M}(\bq)\to K_p(\bq)$ as $M\to +\infty$.

If $f$ is a $C^1$ function, we denote by
$$
\omega(\delta):=\max_{|z-z'|\leq \delta}|\nabla f(z)-\nabla f(z')|,
$$
the modulus of continuity of $\nabla f$, which satisfies $\lim_{\delta\to 0}\omega(\delta)=0$.
We also define for all $z\in \Omega$
$$
q_z(z')=f(z)+\nabla f\cdot (z'-z),
$$
the Taylor polynomial of order $1$ at $z$. We identify
its linear part to the gradient of $f$ at $z$:
$$
\bq_z=\nabla f(z).
$$
We thus have
$$
|f(z')-q_z(z')|\leq |z-z'|\omega(|z-z'|).
$$
At each point $z$, we denote by $T_M(z)$ a rectangle of area $1$ which is shape-optimized
with respect to the gradient of $f$ at $z$
in the sense that it solves \iref{CMshapoptrect1M} with $\bq=\bq_z$.
The following results gives an estimate of the local
error for $f$ for such optimized triangles.

\begin{lemma}
\label{CMlemmalocalrect}
Let $T=a+hT_M(z)$ be a rescaled and shifted version of $T_M(z)$. We then have
for any $z'\in T$
$$
e_{1,T}(f)_p\leq (K_{p,M}(\bq_{z'})+B_M\omega(\max\{|z-z'|,h_T\}))|T|^{1/\tau},
$$
with $B_M:=2C_M+M$.
\end{lemma}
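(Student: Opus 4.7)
The plan is to decompose the error via the Taylor polynomial at $z'$, use the scaling properties of $e_{1,T}(\bq)_p$ to reduce to the unit-area rectangle $T_M(z)$, and then exploit the Lipschitz continuity of $\bq\mapsto e_{1,T_M(z)}(\bq)_p$ and of $K_{p,M}$ to transfer everything to $K_{p,M}(\bq_{z'})$.

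More concretely, I would start from the triangle inequality \iref{trierror}:
$$
e_{1,T}(f)_p \leq e_{1,T}(q_{z'})_p + e_{1,T}(f-q_{z'})_p \leq e_{1,T}(\bq_{z'})_p + \|f - q_{z'}\|_{L^p(T)},
$$
using that $q_{z'}-\bq_{z'}$ is a constant so the first term only sees the linear part. Since $T = a + hT_M(z)$ with $h=\sqrt{|T|}$, the translation invariance \iref{CMinvartransrect} and the scaling relation \iref{CMinvarhomrect} yield
$$
e_{1,T}(\bq_{z'})_p = h^{2/p+1}\, e_{1,T_M(z)}(\bq_{z'})_p = |T|^{1/\tau}\, e_{1,T_M(z)}(\bq_{z'})_p.
$$

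Next I would compare $e_{1,T_M(z)}(\bq_{z'})_p$ with $K_{p,M}(\bq_{z'})$ via a two-step Lipschitz argument. By the defining property of $T_M(z)$ one has $e_{1,T_M(z)}(\bq_{z})_p = K_{p,M}(\bq_{z})$, so using the Lipschitz bound \iref{CMkmplip} applied to the rectangle $T_M(z)$ and then the corresponding Lipschitz property of $K_{p,M}$ noted just before the lemma,
$$
e_{1,T_M(z)}(\bq_{z'})_p \leq e_{1,T_M(z)}(\bq_{z})_p + C_M|\bq_{z'}-\bq_z| = K_{p,M}(\bq_z) + C_M|\bq_{z'}-\bq_z|,
$$
$$
K_{p,M}(\bq_z) \leq K_{p,M}(\bq_{z'}) + C_M|\bq_{z'}-\bq_z|.
$$
Since $\bq_w = \nabla f(w)$, the difference $|\bq_z - \bq_{z'}|$ is at most $\omega(|z-z'|)$, which gives
$$
e_{1,T_M(z)}(\bq_{z'})_p \leq K_{p,M}(\bq_{z'}) + 2C_M\,\omega(|z-z'|).
$$

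For the remainder term, the pointwise bound $|f(w)-q_{z'}(w)| \leq |w-z'|\,\omega(|w-z'|)$ for $w\in T$ implies $\|f-q_{z'}\|_{L^p(T)} \leq h_T\,\omega(h_T)\,|T|^{1/p}$. The diameter constraint in \iref{CMshapoptrect1M} gives $h_T \leq Mh = M\sqrt{|T|}$, hence $h_T|T|^{1/p} \leq M\,|T|^{1/2+1/p} = M|T|^{1/\tau}$, so the remainder is at most $M\omega(h_T)\,|T|^{1/\tau}$. Adding the two contributions and using monotonicity of $\omega$ to majorize both $\omega(|z-z'|)$ and $\omega(h_T)$ by $\omega(\max\{|z-z'|,h_T\})$ produces the advertised constant $B_M = 2C_M+M$. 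The calculation is essentially mechanical; the only slightly subtle point to keep in mind is that $T_M(z)$ is optimized for $\bq_z$ rather than $\bq_{z'}$, which is exactly what forces the double application of Lipschitz continuity and accounts for the factor $2C_M$ rather than $C_M$.
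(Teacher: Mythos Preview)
Your proof is correct and follows essentially the same route as the paper's: the same triangle-inequality decomposition through $q_{z'}$, the same scaling identity $e_{1,T}(\bq_{z'})_p=|T|^{1/\tau}e_{1,T_M(z)}(\bq_{z'})_p$, the same double use of Lipschitz continuity (once for $e_{1,T_M(z)}(\cdot)_p$, once for $K_{p,M}$) to produce the $2C_M$ term, and the same bound $h_T|T|^{1/p}\leq M|T|^{1/\tau}$ for the Taylor remainder. The final step of majorizing both $\omega$-arguments by $\omega(\max\{|z-z'|,h_T\})$ is exactly what the paper leaves implicit in its statement.
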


\begin{proof}
For all $z,z'\in \Omega$, we have
$$
\begin{array}{ll} 
e_{1,T_M}(\bq_{z'}) &\leq e_{1,T_M}(\bq_{z})+C_M|\bq_z-\bq_{z'}| \\
& =K_{p,M}(\bq_z)+C_M|\bq_z-\bq_{z'}| \\
&\leq K_{p,M}(\bq_{z'})+2C_M|\bq_z-\bq_{z'}| \\
&\leq K_{p,M}(\bq_{z'})+2C_M\omega(|z-z'|).
\end{array}
$$
We then observe that if $z'\in T$
$$
\begin{array}{ll}
e_{1,T}(f)_p &\leq e_{1,T}(\bq_{z'})+\|f-q_{z'}\|_{L^p(T)}\\
& \leq e_{1,T_M}(\bq_{z'})|T|^{1/\tau}+\|f-q_{z'}\|_{L^\infty(T)}|T|^{1/p}\\
& \leq (K_{p,M}(\bq_{z'})+2C_M\omega(|z-z'|))|T|^{1/\tau}
+h_T\omega(h_T)|T|^{1/p} \\
&\leq 
(K_{p,M}(\bq_{z'})+2C_M\omega(|z-z'|)+M\omega(h_T))|T|^{1/\tau},
\end{array}
$$
which concludes the proof. \hfill $\Box$
\end{proof}
$\;$
\nl
We are now ready to state our main convergence theorem.

\begin{theorem}
\label{CMupperecttheo}
For piecewise constant approximation on adaptive anisotropic
partitions on rectangles, we have 
\be
\limsup_{N\to +\infty} \; N^{1/2} \sigma_N(f)_p \leq  \|K_p(\nabla f)\|_{L^\tau}.
\label{CMlimsupsigmaNanisorect}
\ee
for all $f\in C^1([0,1]^2)$.
\end{theorem}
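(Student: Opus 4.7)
The plan is to build explicit near-optimal partitions based on a two-scale construction: a coarse uniform grid of squares $Q_i$ of side $\delta$ on which $\nabla f$ is nearly constant by $C^1$-regularity, and within each $Q_i$ a fine axis-aligned sub-grid of rectangles shape-optimized with respect to the local gradient. As a preliminary reduction, the tamed functional satisfies $K_{p,M}(\bq)\searrow K_p(\bq)$ monotonically as $M\to\infty$, and $K_{p,M}(\nabla f)$ is uniformly bounded on $\Omega$ since $K_{p,M}$ is continuous and $f$ is $C^1$; by dominated convergence, $\|K_{p,M}(\nabla f)\|_{L^\tau}\to\|K_p(\nabla f)\|_{L^\tau}$, so it suffices to prove $\limsup_N N^{1/2}\sigma_N(f)_p\leq\|K_{p,M}(\nabla f)\|_{L^\tau}$ for each fixed $M$.

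\emph{Construction and local estimate.}
Fix $M,\delta>0$, tile $[0,1]^2$ by a uniform $\delta$-grid $\{Q_i\}$, and in each $Q_i$ pick $z_i$. Let $K_i:=K_{p,M}(\nabla f(z_i))$ and let $r_i$ be the aspect ratio of a minimizer $T_M(z_i)$ in \iref{CMshapoptrect1M}. For a target per-element error $\eta>0$, set $n_i:=\max\{1,\lceil|Q_i|K_i^\tau\eta^{-\tau}\rceil\}$ and tile $Q_i$ by a regular $k_i\times l_i$ grid of axis-parallel rectangles, where integers $k_i,l_i$ are chosen so that $k_i l_i$ is within a bounded factor of $n_i$ and $l_i/k_i=r_i+O(n_i^{-1/2})$ (e.g.\ $k_i=\lfloor\sqrt{n_i/r_i}\rfloor$ and $l_i=\lfloor\sqrt{n_i r_i}\rfloor$, with an ad hoc fix in the degenerate cases $r_i\in\{0,\infty\}$ relying on the diameter bound $h_T\leq M$). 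Combining Lemma \ref{CMlemmalocalrect} with the Lipschitz estimate \iref{CMkmplip} (which absorbs the $O(n_i^{-1/2})$ aspect-ratio mismatch) yields, for each rectangle $T\subset Q_i$,
$$
e_{1,T}(f)_p\leq\bigl(K_i+B_M\,\omega((M+\sqrt2)\delta)+o_\delta(1)\bigr)\,|T|^{1/\tau}.
$$

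\emph{Summation, limits and main obstacle.}
Raising to the $p$-th power, summing over the $n_i$ rectangles in $Q_i$, and using $|Q_i|/n_i=\eta^\tau K_i^{-\tau}(1+o(1))$ together with $p/\tau-p/2=1$, we get $\sum_{T\subset Q_i}e_{1,T}(f)_p^p\leq(1+o_\delta(1))|Q_i|K_i^\tau\eta^{p-\tau}$. Summing over $i$, the Riemann sum $\sum_i|Q_i|K_i^\tau$ converges to $\|K_{p,M}(\nabla f)\|_{L^\tau}^\tau$ as $\delta\to 0$ by continuity of $K_{p,M}\circ\nabla f$; the total cardinality is $N=(1+o_\delta(1))\eta^{-\tau}\|K_{p,M}(\nabla f)\|_{L^\tau}^\tau+O(\delta^{-2})$. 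Inverting the relation between $\eta$ and $N$ and using $\tfrac1p-\tfrac1\tau=-\tfrac12$ gives $\sigma_N(f)_p\leq(1+o(1))N^{-1/2}\|K_{p,M}(\nabla f)\|_{L^\tau}$. The hard part will be the joint management of three limits: send $N\to\infty$ with $\delta,M$ fixed (absorbing rounding and the modulus of continuity $\omega(\delta)$ into the $o(1)$), then $\delta\to 0$ with $M$ fixed (for Riemann-sum convergence), then $M\to\infty$ (to recover $K_p$). The secondary technical obstacle is the tileability constraint — a square cell must be tiled exactly by axis-aligned rectangles of prescribed aspect $r_i$ — which forces the $k_i\times l_i$ grid and a small Diophantine argument; degenerate cells where $\partial_x f\,\partial_y f$ vanishes are handled uniformly by the diameter bound $h_T\leq M$ built into $K_{p,M}$, which keeps the corresponding aspect ratio finite.
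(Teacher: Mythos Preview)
Your two-scale construction, the use of the tamed functional $K_{p,M}$, and the appeal to Lemma \ref{CMlemmalocalrect} are exactly the paper's strategy. The gap is in how you size the sub-grids. With $n_i:=\max\{1,\lceil|Q_i|K_i^\tau\eta^{-\tau}\rceil\}$, the per-cell bound you write,
$$
\sum_{T\subset Q_i}e_{1,T}(f)_p^p\leq(1+o_\delta(1))\,|Q_i|\,K_i^\tau\,\eta^{p-\tau},
$$
hides a factor $(1+\epsilon_\delta/K_i)^p$ with $\epsilon_\delta:=B_M\omega((M+\sqrt2)\delta)$, and this factor is \emph{not} uniformly $1+o_\delta(1)$ over $i$: it blows up on cells where $K_i$ is small compared to $\epsilon_\delta$. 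In the extreme case $\nabla f(z_i)=0$ one has $K_i=0$, $n_i=1$, and the error on $Q_i$ is of order $\epsilon_\delta\,\delta^{2/\tau}$ independently of $\eta$; multiplying by $N^{1/2}\sim\eta^{-\tau/2}\to\infty$ destroys the estimate. Your remark that the diameter bound in $K_{p,M}$ handles degeneracies is correct only for the aspect-ratio issue when $\partial_xf\,\partial_yf=0$ but $\nabla f\neq0$; it does nothing for the refinement level when $K_i$ itself is small.

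The paper's fix is a one-line regularization: scale the local rectangle by
$$
h\bigl(K_{p,M}(\bq_{z_S})+(B_M+C_M)\omega(\delta)+\delta\bigr)^{-\tau/2},
$$
so that \emph{every} cell is refined enough to bring the per-element error uniformly below $h^{2/\tau}$, regardless of how small $K_i$ is. The added term contributes an extra $\|(B_M+C_M)\omega(\delta)+\delta\|_{L^\tau}$ to the final bound, which vanishes as $\delta\to0$ (taken after $h\to0$, before $M\to\infty$). Replacing your $K_i^\tau$ by $(K_i+\epsilon_\delta+\delta)^\tau$ in the definition of $n_i$ repairs your argument and makes it coincide with the paper's.
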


\begin{proof} We first fix some number $\delta>0$ and $M>0$
that are later pushed towards $0$ and $+\infty$ respectively.
We define a uniform partition $\cT_\delta$ of $[0,1]$ into squares $S$
of diameter $h_S\leq \delta$, for example by 
$j_0$ iterations of uniform dyadic refinement, where $j_0$ 
is chosen large enough such that $2^{-j_0+1/2}\leq \delta$.
We then build partitions $\cT_N$ by further decomposing the 
square elements of $\cT_\delta$ in an anisotropic
way. For each $S\in\cT_\delta$, we pick an
arbitrary point $z_S\in S$ (for example the barycenter of $S$)
and consider the Taylor polynomial $q_{z_S}$
of degree $1$ of $f$ at this point. We denote
by $T_S=T_M(\bq_{z_S})$ the rectangle of area $1$ such that,
$$
e_{1,T_S}(\bq_{z_S})_p=\min_{|T|=1,h_T\leq M} e_{1,T}(\bq_{z_S})_p
=K_{p,M}(\bq_{z_S}).
$$
For $h>0$, we rescale this rectangle according to 
$$
T_{h,S}=h(K_{p,M}(\bq_{z_S})+(B_M+C_M)\omega(\delta)+\delta)^{-\tau/2} T_S.
$$
and we define $\cT_{h,S}$ as the tiling of the plane by 
$T_{h,S}$ and its translates. We assume that $hC_A\leq \delta$ 
so that $h_T\leq \delta$ for all
$T\in\cT_{h,S}$ and all $S$. Finally, we define the partition 
$$
\cT_N=\{T\cap S\, ; \; T\in \cT_{h,S}\;{\rm and}\; S\in \cT_\delta\}.
$$
We first estimate the local approximation error. By lemma
\iref{CMlemmalocalrect}, we obtain that
for all $T\in\cT_{h,S}$ and $z'\in T\cap S$
$$
\begin{array}{ll}
e_{1,T\cap S}(f)_p & \leq e_{1,T}(f)_p\\
& \leq (K_{p,M}(\bq_{z'})+B_M\omega(\delta))|T|^{1/\tau} \\
& \leq h^{2/\tau}
(K_{p,M}(\bq_{z_S})+(B_M+C_M)\omega(\delta))
(K_{p,M}(\bq_{z_S})+(B_M+C_M)\omega(\delta)+\delta)^{-1}\\
&\leq h^{2/\tau}
\end{array}
$$
The rescaling has therefore the effect of equidistributing
the error on all rectangles of $\cT_N$, and the global approximation error is bounded by
\be
e_{1,\cT_N}(f)_p \leq N^{1/p}h^{2/\tau}
\label{CMestimglobrect}
\ee
We next estimate the number of rectangles
$N=\#(\cT_N)$, which behaves like 
$$
\begin{array}{ll}
N&=(1+\eta(h))\sum_{S\in \cT_{\delta}} \frac {|S|}{|T_{h,S}|}\\
&=(1+\eta(h)) h^{-2}\sum_{S\in \cT_{\delta}}|S| (K_{p,M}(\bq_{z_S})+(B_M+C_M)\omega(\delta)+\delta)^\tau\\
&=(1+\eta(h))  h^{-2}\sum_{S\in \cT_{\delta}}\int_{S}(K_{p,M}(\bq_{z_S})+(B_M+C_M)\omega(\delta)+\delta)^{\tau},
\end{array}
$$
where $\eta(h)\to 0$ as $h\to 0$. Recalling that $K_{p,M}(\bq_{z_S})$ is Lipschitz continuous
with constant $C_M$, it follows that
\be
N\leq (1+\eta(h))  h^{-2}\int_{\Omega}(K_{p,M}(\bq_{z})+(B_M+2C_M)\omega(\delta)+\delta)^{\tau}.
\label{CMestimNrect}
\ee
Combining \iref{CMestimglobrect} and \iref{CMestimNrect},
we have thus obtained
$$
N^{1/2} e_{1,\cT_N}(f)_p \leq  (1+\eta(h)) ^{1/\tau}\|K_{p,M}(\bq_z)+(B_M+2C_M)\omega(\delta)+\delta\|_{L^\tau}.
$$
Observing that for all $\e>0$, we can choose $M$ large enough and $\delta$
and $h$ small enough
so that
$$
(1+\eta(h)) ^{1/\tau}\|K_{p,M}(\bq_z)+(B_M+2C_M)\omega(\delta)+\delta\|_{L^\tau}\leq
\|K_{p,M}(\bq_z)\|_{L^\tau}+\e,
$$
this concludes the proof.
\hfill $\Box$
\end{proof}
$\;$
\nl
In a similar way as in Theorem \ref{CMlowerisotheo},
we can establish a lower estimate on $\sigma_N(f)$,
which reflects the saturation rate $N^{-1/2}$ of the method,
and shows that the numerical quantity that governs this rate 
is exactly equal to $\|K_p(\nabla f)\|_{L^\tau}$. We again
impose a slight restriction on the set
$\cA_N$ of admissible partitions, assuming that
the diameter of all elements decreases 
as $N\to +\infty$, according to 
\be
\max_{T\in \cT_N} h_T \leq AN^{-1/2},
\label{CMunidecaydiamrect}
\ee
for some $A>0$ which may be arbitrarily large.

\begin{theorem}
\label{CMlowerisotheorect}
Under the restriction {\rm \iref{CMunidecaydiamrect}}, we
have
\be
\liminf_{N\to +\infty} \; N^{1/2}\sigma_N(f)_p \geq \|K_p(\nabla f)\|_{L^\tau}
\label{CMliminfaniso}
\ee
for all $f\inÊC^1(\Omega)$, where  $\frac 1 \tau:=\frac 1 p+\frac 1 2$.
\end{theorem}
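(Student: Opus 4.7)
The plan is to mirror the proof of Theorem \ref{CMlowerisotheo}, replacing the local semi-norm $|q_{z_T}|_{W^{m,\tau}(T)}$ by the shape-optimization functional $K_p(\bq_{z_T})|T|^{1/\tau}$. Fix $\e>0$. Since $f\in C^1([0,1]^2)$, $\nabla f$ is uniformly continuous, hence there exists $\delta>0$ such that whenever $|z-z'|\leq\delta$ one has $|f(z')-q_z(z')|\leq \e|z-z'|$ (with $q_z$ the order-$1$ Taylor polynomial of $f$ at $z$) and, by continuity of the explicit map $\bq\mapsto K_p(\bq)=C_p\sqrt{|q_xq_y|}$, also $|K_p(\nabla f(z))-K_p(\nabla f(z'))|\leq \e$. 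Let $N_0(\e)$ be large enough that $AN_0^{-1/2}\leq \delta$, pick $N\geq N_0(\e)$, and consider an arbitrary admissible partition $\cT_N\in\cA_N$. For each $T\in\cT_N$ select some $z_T\in T$ and form $q_{z_T}$, so that $h_T\leq \delta$ forces $\|f-q_{z_T}\|_{L^p(T)}\leq \e AN^{-1/2}|T|^{1/p}$.

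Next I establish the local lower bound. The triangle inequality gives $e_{1,T}(q_{z_T})_p\leq e_{1,T}(f)_p+\|f-q_{z_T}\|_{L^p(T)}$, while \iref{CMqbq}, \iref{CMinvartransrect} and \iref{CMinvarhomrect} together with the very definition of $K_p$ yield
$$e_{1,T}(q_{z_T})_p=e_{1,T}(\bq_{z_T})_p=|T|^{1/\tau}e_{1,T'}(\bq_{z_T})_p\geq K_p(\bq_{z_T})|T|^{1/\tau},$$
where $T'$ is a translate and rescaling of $T$ of unit area. Thus $e_{1,T}(f)_p\geq K_p(\bq_{z_T})|T|^{1/\tau}-\e AN^{-1/2}|T|^{1/p}$, and the inequality $e_{1,T}(f)_p+\e AN^{-1/2}|T|^{1/p}\geq K_p(\bq_{z_T})|T|^{1/\tau}$ actually holds \emph{unconditionally} on the sign of the difference, since $e_{1,T}(f)_p\geq 0$. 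Applying Minkowski's inequality in $\ell^p$ over $T\in\cT_N$ then gives
$$e_{1,\cT_N}(f)_p\geq\Bigl(\sum_T K_p(\bq_{z_T})^p|T|^{p/\tau}\Bigr)^{1/p}-\e AN^{-1/2}|\Omega|^{1/p}.$$

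To bound the main sum I use the continuity of $K_p\circ\nabla f$: since $|z-z_T|\leq h_T\leq \delta$ for $z\in T$, $K_p(\bq_{z_T})\geq (K_p(\nabla f(z))-\e)_+$ pointwise on $T$, so taking the $\tau$-th power and integrating,
$$K_p(\bq_{z_T})|T|^{1/\tau}\geq\|(K_p(\nabla f)-\e)_+\|_{L^\tau(T)}=:y_T.$$
Since $p/\tau=1+p/2\geq 1$, Hölder's inequality delivers $\sum_T y_T^\tau\leq\bigl(\sum_T y_T^p\bigr)^{\tau/p}N^{1-\tau/p}$, which rearranges to
$$\Bigl(\sum_T y_T^p\Bigr)^{1/p}\geq N^{-1/2}\|(K_p(\nabla f)-\e)_+\|_{L^\tau(\Omega)},$$
using that $\sum_T y_T^\tau=\|(K_p(\nabla f)-\e)_+\|_{L^\tau(\Omega)}^\tau$ by disjoint additivity. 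Combining these estimates, taking the infimum over $\cT_N\in\cA_N$ and then $\liminf$ as $N\to\infty$, I obtain
$$\liminf_{N\to\infty}N^{1/2}\sigma_N(f)_p\geq\|(K_p(\nabla f)-\e)_+\|_{L^\tau}-\e A|\Omega|^{1/p}.$$
Letting $\e\downarrow 0$ and invoking monotone convergence (since $(K_p(\nabla f)-\e)_+\uparrow K_p(\nabla f)$) yields \iref{CMliminfaniso}; the case $p=\infty$ requires only replacing $\ell^p$-Minkowski by the obvious maximum estimate.

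The main obstacle is purely conceptual rather than technical: unlike in the isotropic setting where the natural local surrogate is a genuine semi-norm, here $K_p(\bq_{z_T})|T|^{1/\tau}$ is a shape-optimized \emph{infimum}. What makes the argument work is precisely that this infimum is a lower bound for $e_{1,T}(q_{z_T})_p$ on \emph{any} rectangle of area $|T|$, regardless of its aspect ratio, so the bound applies uniformly against any adversary-chosen anisotropic partition; the diameter restriction \iref{CMunidecaydiamrect} is used only to ensure that the Taylor error and the modulus of continuity of $K_p\circ\nabla f$ are uniformly small as $N\to\infty$.
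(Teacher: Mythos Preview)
Your proof is correct. The overall architecture matches the paper's: bound $e_{1,T}(f)_p$ from below by $K_p(\bq_{z_T})|T|^{1/\tau}$ minus a Taylor remainder, then apply H\"older to pass from the $\ell^p$ aggregate to the $L^\tau$ norm of $K_p(\nabla f)$. The H\"older step you use, $\sum_T y_T^\tau\leq(\sum_T y_T^p)^{\tau/p}N^{1-\tau/p}$, is exactly the discrete form of the paper's weighted H\"older inequality $\int_\Omega K_p(\nabla f)^\tau\leq(\int_\Omega K_p(\nabla f)^p|T_z|^{p/2})^{\tau/p}(\int_\Omega|T_z|^{-1})^{1-\tau/p}$, since $\int_\Omega|T_z|^{-1}=N$.

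Where you genuinely diverge is in the treatment of the Taylor error. The paper raises the local lower bound to the $p$-th power via the elementary inequality $a^p\geq b^p-pcb^{p-1}$ (valid for $a\geq b-c$, $a,b,c\geq0$), then controls the cross term using a uniform bound $\max_z K_p(\bq_z)^{p-1}\leq C$; the remainder is absorbed as a single additive $o(N^{-p/2})$ after summation. You instead move the Taylor term to the other side and invoke Minkowski in $\ell^p$, separating the error \emph{before} taking powers; the continuity of $K_p\circ\nabla f$ is then handled by the truncation $(K_p(\nabla f)-\e)_+$ and monotone convergence. Your route is slightly more elementary (no convexity trick, no uniform bound on $K_p^{p-1}$) and arguably cleaner; the paper's route has the advantage that the equality case in its H\"older step directly exhibits the error-equidistribution principle, as noted in the remark following their proof.
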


\begin{proof} We assume here $p<\infty$. The case $p=\infty$
can be treated by a simple modification of the argument.
Here, we need a lower estimate for the local approximation error,
which is a counterpart to Lemma \ref{CMlemmalocalrect}.
We start by remarking that for all rectangle $T\in \Omega$ and $z\in T$, we have
$$
|e_{1,T}(f)_p-e_{1,T}(q_z)_p|\leq \|f-q_z\|_{L^p(T)}\leq  |T|^{1/p}h_T\omega(h_T),
$$
and therefore
$$
e_{1,T}(f)_p\geq e_{1,T}(q_z)_p-|T|^{1/p}h_T\omega(h_T) \geq K_p(\bq_z)|T|^{1/\tau}-|T|^{1/p}h_T\omega(h_T)
$$
Then, using the fact that if $(a,b,c)$ are positive  numbers such that
$a\geq b-c$ one has $a^p\geq b^p-pcb^{p-1}$, we find that
$$
\begin{array}{ll}
e_{1,T}(f)_p^p & \geq K_p(\bq_z)^p|T|^{p/\tau}- pK_p(\bq_z)^{p-1}|T|^{(p-1)/\tau}|T|^{1/p}h_T\omega(h_T)\\
&=K_p(\bq_z)^p|T|^{1+p/2}-pK_p(\bq_z)^{p-1}|T|^{1+(p-1)/2} h_T\omega(h_T),
\end{array}
$$
Defining $C:=p\max_{z\in \Omega}K_p(\bq_z)^{p-1}$ and remarking that $|T|^{(p-1)/2}\leq h^{p-1}$, 
this leads
to the estimate
$$
e_{1,T}(f)_p^p \geq K_p(\bq_z)^p|T|^{1+p/2}- Ch_T^p|T|\omega(h_T).
$$
Since we work under the assumption
\iref{CMunidecaydiamrect}, we can rewrite this estimate as
\be
e_{1,T}(f)_p^p \geq K_p(\bq_z)^p|T|^{1+p/2}- C|T|N^{-p/2}\e(N),
\label{CMloclowerrect}
\ee
where $\e(N)\to 0$ as $N\to \infty$. Integrating \iref{CMloclowerrect} over $T$,
gives
$$
e_{1,T}(f)_p^p \geq \int_T (K_p(\bq_z)^p|T|^{p/2}- CN^{-p/2}\e(N))dz.
$$
Summing over all rectangles $T\in\cT_N$ and denoting by $T_z$ the triangle that contains $z$, we
thus obtain
\be
e_{1,\cT_N}(f)_p^p\geq \int_\Omega K_p(\nabla f(z))^p|T_z|^{p/2}dz - C|\Omega| N^{-p/2}\e(N).
\label{CMgloblowerrect}
\ee
Using H\"older inequality, we find that
\be
\int_\Omega K_p(\nabla f(z))^\tau dz \leq 
\(\int_\Omega K_p(\nabla f(z))^p|T_z|^{p/2}dz\)^{\tau/p}
\(\int_\Omega |T_z|^{-1}dz\)^{1-\tau/p}.
\label{CMholderrect}
\ee
Since $\int_\Omega |T_z|^{-1}dz=\#(\cT_N)=N$, it follows that
$$
e_{1,\cT_N}(f)_p^p\geq \|K_p(\nabla f)\|_{L^\tau}^pN^{-p/2} - C|\Omega| N^{-p/2}\e(N),
$$
which concludes the proof. \hfill $\Box$
\end{proof}

\begin{remark}
The H\"older inequality \iref{CMholderrect} which is used in the above proof
becomes an equality when the quantity $K_p(\nabla f(z))^p|T_z|^{p/2}$
and $|T_z|^{-1}$ are proportional, i.e. $K_p(\nabla f(z))|T|^{1/\tau}$ is constant,
which again reflects the principle of error equidistribution. In summary,
the optimal partitions should combine this principe with locally optimized
shapes for each element.
\end{remark}

\section{Anisotropic piecewise polynomial approximation}

We turn to adaptive
piecewise polynomial approximation on anisotropic partitions 
consisting of triangles, or simplices in higher dimension.
Here $\Omega\subset \R^d$ is a domain that
can be decomposed into such partitions, therefore
a polygon when $d=2$, a polyhedron when $d=3$, etc.
The family $\cA_N$ consists therefore of all partitions of $\Omega$
of at most $N$ simplices. The first estimates of the form
\iref{CMsigmaNaniso} were rigorously established in
\cite{CMcsx} and \cite{CMbbls} in the case of 
piecewise linear element for bidimensional triangulations.
Generalization to higher polynomial degree
as well as higher dimensions were recently proposed
in \cite{CMcao1,CMcao2,CMcao3}
as well as in \cite{CMm}. Here we follow the general
approach of \cite{CMm} to the characterization
of optimal partitions.

\subsection{The shape function}

If $f$ belongs to $C^m(\Omega)$, 
where $m-1$ is the degree of the piecewise polynomials that we use for
approximation, we mimic the heuristic approach
proposed for piecewise constants on rectangles in \S 4.1
by assuming that on each triangle $T$
the relative variation of $d^mf$ is small so that it can be considered
as a constant over $T$. This means that
$f$ is locally identified with its Taylor polynomial
of degree $m$ at $z$, which is defined as
$$
q_z(z'):=f(z)+\nabla f(z)\cdot (z'-z)+\sum_{k=2}^m\frac 1 {k!} d^kf(z)[z'-z,\cdots,z'-z].
$$
If $q\in \PP_m$ is a polynomial of degree $m$, we denote
by $\bq\in\HH^m$ its homogeneous part of degree $m$.
For $q=q_z$ we can identify $\bq_z\in \HH_m$ with $\frac 1 {m!} d^mf(z)$.
Since $\bq-q\in \PP_{m-1}$ we have
$$
e_{m,T}(q)_p=e_{m,T}(\bq)_p.
$$
We optimize the shape of the simplex $T$ with respect to $\bq$
by introducing the function $K_{m,p}$ defined on the space $\HH_m$
\be
K_{m,p}(\bq):=\inf_{|T|=1}e_{m,T}(\bq)_p,
\label{CMshapopttri1}
\ee
where the infimum is taken among all triangles of area $1$.
This infimum may or may not be attained. We refer to
$K_{m,p}$ as the {\it shape function}.
It is obviously a generalization of the
function $K_p$ introduced for piecewise 
constant on rectangles in \S 4.1.

As in the case of rectangles, some elementary
properties of $K_{m,p}$
are obtained by change of variable: if $a+T$ is a shifted version of $T$, then
\be
e_{m,a+T}(\bq)_p=e_{m,T}(\bq)_p
\label{CMinvartranstri}
\ee 
since $\bq$ and $\bq(\cdot-a)$ differ by a polynomial of degree $m-1$,
and that if $hT$ is a dilation of $T$, then 
\be
e_{m,hT}(\bq)_p= h^{d/p+m}e_{m,T}(\bq)_p
\label{CMinvarhomtri.}
\ee
Therefore, if $T$ is a minimizing simplex in
\iref{CMshapopttri1}, then $a+T$ is also one,
and if we are interested in minimizing the error 
for a given area $|T|=A$, we find that
\be
\inf_{|T|=A}e_{m,T}(q)_p=A^{1/\tau}K_{m,p}(\bq),\;\;  \frac 1 \tau:=\frac 1  p+\frac m d
\label{CMshapopttri}
\ee
and the minimizing simplex for \iref{CMshapoptrect}
are obtained by rescaling the minimizing simplex for \iref{CMshapoptrect1}.

Remarking in addition that if $\vp$ is an invertible
linear transform, we then have for all $f$
$$
|{\rm det} (\vp)|^{1/p} e_{m,T}(f\circ\vp)_p=e_{m,\vp(T)}(f)_p,
$$
and using \iref{CMshapopttri}, we also obtain that
\be
K_{m,p}(\bq \circ \vp)=|{\rm det} (\vp)|^m K_{m,p}(\bq)
\label{CMinvarphik}
\ee
The minimizing simplex of area $1$ for $\bq \circ \vp$
is obtained by application of $\vp^{-1}$ followed by
a rescaling by $|{\rm det} (\vp)|^{1/d}$ to the minimizing simplex of 
area $1$ for $\bq$ if it exists. 

\subsection{Algebraic expressions of the shape function}

The identity \iref{CMinvarphik} can be used 
to derive the explicit expression
of $K_{m,p}$ for particular values of $(m,p,d)$,
as well as the exact shape of the minimizing triangle $T$ 
in \iref{CMshapopttri1}. 

We first consider the 
case of piecewise affine elements on two dimensional
triangulations, which corresponds to $d=m=2$. Here
$\bq$ is a quadratic form and we denote by
$\det(\bq)$ its determinant. We also denote by
$|\bq|$ the positive quadratic form associated
with the absolute value of the symmetric matrix 
associated to $\bq$.

If ${\rm det} (\bq)\neq 0$, there exists 
a $\vp$ such that $\bq \circ \vp$ is either $x^2+y^2$ or $x^2-y^2$, up to
a sign change, and we have 
$|{\rm det} (\bq)|=|{\rm det} (\vp)|^{-2}$.
It follows from \iref{CMinvarphik} that $K_{2,p}(\bq)$ has the simple form
\be
K_{2,p}(\bq)=\kappa_p  |{\rm det} (\bq)|^{1/2},
\label{CMk2pexp}
\ee
where $\kappa_p:=K_{2,p}(x^2+y^2)$ 
if ${\rm det} (\bq)>0$ and  $\kappa_p=K_{2,p}(x^2-y^2)$ if ${\rm det} (\bq)<0$. 

The triangle of area $1$ that minimizes the $L^p$ error when $\bq=x^2+y^2$
is the equilateral triangle, which is unique up to rotations.
For $\bq=x^2-y^2$, the triangle that minimizes the $L^p$ error
is unique up to an hyperbolic transformation with eigenvalues $t$ and $1/t$
and eigenvectors $(1,1)$ and $(1,-1)$ for any $t\neq 0$. Therefore,
such triangles may be highly anisotropic, but at least one of them
is isotropic. For example, it can be checked that a triangle of area
$1$ that minimizes the $L^\infty$ error is given
by the half square with vertices $((0,0),(\sqrt 2,0),(0,\sqrt 2))$.
It can also be checked that an equilateral triangle $T$ of area $1$
is a ``near-minimizer'' in the sense that
$$
e_{2,T}(\bq)_p\leq CK_{2,p}(\bq),
$$
where $C$ is a constant independent of $p$.  It follows that
when ${\rm det} (\bq)\neq 0$, the triangles
which are isotropic with respect to the distorted metric induced by $|\bq|$
are ``optimally adapted'' to $\bq$ in the sense
that they nearly minimize the $L^p$ error 
among all triangles of similar area.

In the case when ${\rm det} (\bq)= 0$, which corresponds to
one-dimensional quadratic forms $\bq=(ax+by)^2$, the
minimum in \iref{CMshapopttri1} is not attained and the
minimizing triangles become infinitely long along the
null cone of $\bq$. In that case one has $K_{2,p}(\bq)=0$
and the equality \iref{CMk2pexp} remains therefore valid.

These results easily generalize to piecewise affine functions
on simplicial partitions in higher dimension $d>1$:
one obtains
\be
K_{2,p}(\bq)=\kappa_p  |{\rm det} (\bq)|^{1/d},
\label{CMk2dpexp}
\ee
where $\kappa_p$ only takes a finite number of possible values.
When ${\rm det} (\bq)\neq 0$, the simplices
which are isotropic with respect to the distorted metric induced by $|\bq|$
are ``optimally adapted'' to $\bq$ in the sense
that they nearly minimize the $L^p$ error 
among all simplices of similar volume.

The analysis becomes more delicate 
for higher polynomial degree $m\geq 3$.
For piecewise quadratic elements in dimension two, 
which corresponds to $m=3$ and $d=2$, 
it is proved in \cite{CMm} that
$$
K_{3,p}(\bq) =\kappa_p  |{\rm disc} (\bq)|^{1/4}.
$$
for any {\it homogeneous} polynomial $\bq\in \HH_{3}$, where
$$
{\rm disc}(a x^3+ b x^2 y+ c x y^2+ d y^3):= b^2 c^2 - 4 a c^3 - 4 b^3 d + 18 a b c d - 27 a^2 d^2,
$$
is the usual discriminant and $\kappa_p$ only takes two values depending
on the sign of ${\rm disc} (\bq)$. The analysis that leads
to this result also describes the shape of the
triangles which are optimally adapted to $\bq$.

For other values of $m$ and $d$, the exact expression
of $K_{m,p}(\bq)$ is unknown, but it is possible to give
equivalent versions in terms of polynomials $Q_{m,d}$ in the
coefficients of $\bq$, in the following sense: for all $\bq\in\HH_m$
$$
c_1(Q_{m,d}(\bq))^{\frac 1 r}\leq K_{3,p}(\bq)\leq c_2 (Q_{m,d}(\bq))^{\frac 1 r},
$$
where $r:={\rm deg}(Q_{m,d})$, see \cite{CMm}.

\begin{remark}
It is easily checked that the shape functions $\bq\mapsto K_{m,p}(\bq)$ are equivalent for all $p$
$p$ in the sense that there exist constant $0<C_1\leq C_2$ that
only depend on the dimension $d$ such that 
$$
C_1 K_{m,\infty }(\bq) \leq K_{m,p}(\bq)\leq C_2 K_{m,\infty}(\bq),
$$
for all $\bq\in \HH_m$ and $p\geq 1$. In particular a minimizing triangle
for $K_{m,\infty}$ is a near-minimizing triangle for $K_{m,p}$. In that
sense, the optimal shape of the element does not strongly depend on $p$.
\end{remark}

\subsection{Error estimates}

Following at first a similar heuristics as in \S 4.1
for piecewise constants on rectangles, we 
assume that the triangulation $\cT_N$ is
such that all its triangles $T$ have optimized shape
with respect to the polynomial $q$ that coincides
with $f$ on $T$.

According to \iref{CMshapopttri}, we thus have
for any triangle $T\in\cT$,
$$
e_{m,T}(f)_p=|T|^{\frac 1 \tau} K_{m,p}(\bq)=\left \|K_{m,p}\(\frac {d^mf}{m!}\)\right \|_{L^\tau(T)}.
$$
We then apply the principle of {\it error equidistribution},
assuming that
$$
e_{m,T}(f)_p=\eta,
$$
From which it follows that $e_{m,\cT_N}(f)_p\leq N^{1/p}\eta$
and
$$
N\eta^\tau\leq \left \|K_{m,p}\(\frac {d^mf}{m!}\) \right \|_{L^\tau}^\tau,
$$
and therefore
\be
\sigma_N(f)_p  \leq N^{-m/d}  \left \|K_{m,p}\(\frac {d^mf}{m!}\)\right \|_{L^\tau}.
\label{CMfalsesigman}
\ee
This estimate should be compared to
\iref{CMsigmaNtisoequid} which was obtained for 
adaptive partitions with elements of isotropic shape.
The essential difference is in the quantity $K_{m,p}\(\frac {d^mf}{m!}\)$
which replaces $d^m f$ in the $L^\tau$ norm, and which may
be significantly smaller. Consider for example
the case of piecewise affine elements, for which 
we can combine \iref{CMfalsesigman} with \iref{CMk2dpexp} to obtain
\be
\sigma_N(f)_p  \leq CN^{-2/d} \left \||{\rm det}(d^2f)|^{1/d}\right \|_{L^\tau}.
\label{CMfalseaffsigman}
\ee
In comparison to \iref{CMsigmaNtisoequid}, 
the norm of the hessian $|d^2f|$ is replaced by the quantity $|{\rm det}(d^2f)|^{1/d}$ 
which is geometric mean of its eigenvalues, a quantity
which is significantly smaller when two eigenvalues have
different orders of magnitude which reflects an anisotropic behaviour
in $f$.

As in the case of piecewise constants on rectangles, the example
of a function $f$ depending on only one variable 
shows that the estimate
\iref{CMfalseaffsigman} cannot hold as such. 
We may obtain some valid estimates by 
following the same approach as
in Theorem \ref{CMupperecttheo}.
This leads to the following result
which is established in \cite{CMm}.

\begin{theorem}
\label{CMuppersimplextheo}
For piecewise polynomial approximation on adaptive anisotropic
partitions into simplices, we have 
\be
\limsup_{N\to +\infty} \; N^{m/d} \sigma_N(f)_p \leq  
C\left \|K_{m,p}\(\frac {d^m f}{m!}\)\right \|_{L^\tau},\;\; \frac 1 \tau:=\frac 1 p+\frac m d,
\label{CMlimsupsigmaNanisosimp}
\ee
for all $f\in C^m(\Omega)$. The constant $C$ can be chosen equal to $1$ in the case
of two-dimensional triangulations $d=2$.
\end{theorem}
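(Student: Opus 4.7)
The plan is to adapt the proof of Theorem~\ref{CMupperecttheo} (the rectangle case) to simplicial partitions with piecewise polynomials of degree $m-1$, replacing first-order Taylor expansion with order-$m$ Taylor expansion and the affine shape function $K_p$ with $K_{m,p}$. The only genuinely new difficulty, which is exactly what produces the constant $C$ (and why $C=1$ in dimension $d=2$), will be the tiling step: congruent simplices do not tile $\R^d$ for $d\geq 3$.

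First I would introduce a tamed shape function
$$
K_{m,p,M}(\bq):=\min_{|T|=1,\,h_T\leq M}e_{m,T}(\bq)_p,
$$
which is now attained by Hausdorff compactness of shape-bounded simplices, is uniformly Lipschitz in $\bq$ on $\HH_m$ with constant $C_M$ by the argument underlying \iref{CMkmplip}, and converges pointwise to $K_{m,p}(\bq)$ as $M\to+\infty$ by the very definition of the infimum. Letting $\omega$ denote the modulus of continuity of $d^mf/m!$, Taylor's formula gives the pointwise remainder $|f(z')-q_z(z')|\leq |z'-z|^m\omega(|z'-z|)$, and as in Lemma~\ref{CMlemmalocalrect} I would then prove that if $S_M(z)$ is a simplex of unit area realizing $K_{m,p,M}(\bq_z)$ and $T=a+hS_M(z)$, then for any $z'\in T$,
$$
e_{m,T}(f)_p\;\leq\;\bigl(K_{m,p,M}(\bq_{z'})+B_M\,\omega(\max\{|z-z'|,h_T\})\bigr)|T|^{1/\tau},
$$
using (i) Lipschitz continuity of $\bq\mapsto e_{m,S_M(z)}(\bq)_p$, (ii) the scaling identity $e_{m,hT}(\bq)_p=h^{d/p+m}e_{m,T}(\bq)_p$, and (iii) the Taylor remainder bound $\|f-q_{z'}\|_{L^p(T)}\leq h_T^m\omega(h_T)|T|^{1/p}$.

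Next I would construct the partition exactly as in the proof of Theorem~\ref{CMupperecttheo}: fix $\delta>0$, partition $\Omega$ uniformly into cubes $S$ of diameter $\leq\delta$, pick a barycenter $z_S\in S$, and rescale $S_M(\bq_{z_S})$ by
$$
T_{h,S}:=h\bigl(K_{m,p,M}(\bq_{z_S})+(B_M+C_M)\omega(\delta)+\delta\bigr)^{-\tau/d}S_M(\bq_{z_S}),
$$
chosen so that the scaled simplex contributes error exactly $h^{d/\tau}$. Here lies the one real complication: instead of translating a rectangle, I must tile each cube $S$ with translates (and, in $d\geq 3$, mild affine copies) of $T_{h,S}$. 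In dimension $d=2$ the simplex $T_{h,S}$ and its point reflection form a parallelogram which tiles the plane, so every element of the final partition $\cT_N$ is congruent to $T_{h,S}$ and the constant is $1$. In dimension $d\geq 3$ one instead picks $T_{h,S}$ as a near-optimizer (losing a factor $C$) whose affine image tiles a parallelepiped by the standard decomposition into $d!$ simplices of equal volume, and one uses this parallelepiped as the basic tile. By the local bound above, every simplex in the resulting partition satisfies $e_{m,T}(f)_p\leq C\,h^{d/\tau}$.

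Finally, counting cells via a Riemann-sum/Lipschitz argument,
$$
N\;\leq\;(1+\eta(h))\,h^{-d}\int_\Omega\bigl(K_{m,p,M}(\bq_z)+(B_M+2C_M)\omega(\delta)+\delta\bigr)^{\tau}dz,
$$
so $N^{1/p}\cdot C h^{d/\tau}$ gives
$$
N^{m/d}e_{m,\cT_N}(f)_p\;\leq\;C(1+\eta(h))^{1/\tau}\bigl\|K_{m,p,M}(\bq_z)+(B_M+2C_M)\omega(\delta)+\delta\bigr\|_{L^\tau}.
$$
Sending $h\to 0$, then $\delta\to 0$, then $M\to\infty$ (invoking monotone convergence together with $K_{m,p,M}\to K_{m,p}$ and $\omega(\delta)\to 0$) yields \iref{CMlimsupsigmaNanisosimp}. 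The main obstacle is therefore the simplex-tiling step in dimension $d\geq 3$: one must verify that a near-optimizer of $K_{m,p,M}(\bq_{z_S})$ can be chosen so that its orbit under a finite group of affine maps tiles a parallelepiped without increasing the local error beyond a fixed multiplicative factor $C$, and that this $C$ is universal (independent of $\bq$, $M$, $\delta$, $h$).
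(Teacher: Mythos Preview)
Your proposal is correct and follows essentially the same approach as the paper's own proof sketch: adapt the argument of Theorem~\ref{CMupperecttheo} by replacing first-order by $m$-th order Taylor expansion and $K_p$ by a tamed $K_{m,p,M}$, build a fine intermediate partition, tile each piece by rescaled optimally-shaped simplices, and identify the tiling step in dimension $d\geq 3$ as the sole source of the constant $C>1$. One small point: the paper uses a quasi-uniform \emph{simplicial} partition $\cT_\delta$ rather than cubes, which is more natural since $\Omega$ is a general polygon or polyhedron and need not be exactly decomposable into cubes; this is a cosmetic difference and your argument goes through unchanged once you replace cubes by simplices (or handle the boundary layer separately).
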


The proof of this theorem follows exactly the same line as the one of 
Theorem \ref{CMupperecttheo}: we build a sequence of partitions
$\cT_N$ by refining the triangles $S$ of a 
sufficiently fine quasi-uniform partition $\cT_\delta$,
intersecting each $S$ with a partition $\cT_{h,S}$
by elements with shape optimally adapted to the
local value of $d^m f$ on each $S$. The constant $C$ can 
be chosen equal to $1$ in the two-dimensional case, due
to the fact that it is then possible to build $\cT_{h,S}$
as a tiling of triangles which are all optimally adapted.
This is no longer possible in higher dimension,
which explains the presence of a constant $C=C(m,d)$ larger than $1$.

We may also obtain lower estimates, following
the same approach as in Theorem
\ref{CMlowerisotheorect}: we first
impose a slight restriction on the set
$\cA_N$ of admissible partitions, assuming that
the diameter of the elements decreases 
as $N\to +\infty$, according to 
\be
\max_{T\in \cT_N} h_T \leq AN^{-1/d},
\label{CMunidecaydiamtri}
\ee
for some $A>0$ which may be arbitrarily large.
We then obtain the following result, which proof is
similar to the one of Theorem \ref{CMlowerisotheorect}.

\begin{theorem}
\label{CMlowerisotheotri}
Under the restriction {\rm \iref{CMunidecaydiamtri}}, we
have
\be
\liminf_{N\to +\infty} \; N^{m/d}\sigma_N(f)_p \geq \left \|K_{m,p}\(\frac {d^m f}{m!}\) \right \|_{L^\tau}
\label{CMliminftri}
\ee
for all $f\inÊC^m(\Omega)$, where  $\frac 1 \tau:=\frac 1 p+\frac m d$.
\end{theorem}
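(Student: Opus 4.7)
The strategy mirrors that of Theorem~\ref{CMlowerisotheorect}, with the affine local model replaced by the Taylor polynomial $q_z$ of degree $m$ and the shape function $K_p$ replaced by $K_{m,p}$. I treat $p<\infty$; the case $p=\infty$ follows by a standard modification. Since $f\in C^m(\Omega)$, the $m$-th derivative $d^mf$ admits a uniform modulus of continuity $\omega$ with $\omega(\delta)\to 0$ as $\delta\to 0$, and Taylor's theorem gives $\|f-q_z\|_{L^\infty(T)}\leq C h_T^m\,\omega(h_T)$ for any $z\in T$. Recalling that the homogeneous part of $q_z$ is $\bq_z=\tfrac{1}{m!}d^mf(z)$, and using $e_{m,T}(q_z)_p=e_{m,T}(\bq_z)_p\geq K_{m,p}(\bq_z)|T|^{1/\tau}$ from the defining infimum \iref{CMshapopttri1} together with the scaling \iref{CMshapopttri}, the triangle inequality produces
\[
e_{m,T}(f)_p \;\geq\; K_{m,p}(\bq_z)\,|T|^{1/\tau} \;-\; C\,|T|^{1/p}h_T^m\,\omega(h_T).
\]

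Next, I raise this to the $p$-th power using the elementary inequality $a\geq b-c$, $a\geq 0 \Rightarrow a^p\geq b^p-p c\, b^{p-1}$ (which is trivial when $b<c$ since the right side is then nonpositive), obtaining
\[
e_{m,T}(f)_p^{\,p} \;\geq\; K_{m,p}(\bq_z)^p |T|^{p/\tau} \;-\; C\,K_{m,p}(\bq_z)^{p-1}\,|T|^{(p-1)/\tau+1/p}\, h_T^m\,\omega(h_T).
\]
Since $K_{m,p}(\bq_z)$ is bounded on $\Omega$ (via its $L^\infty$ control by $|d^mf|$) and since $|T|\leq Ch_T^d$, the error term is bounded by $C\,|T|\,h_T^{pm}\,\omega(h_T)$; invoking the diameter restriction \iref{CMunidecaydiamtri} this becomes $\leq C\,|T|\,N^{-pm/d}\,\varepsilon(N)$ with $\varepsilon(N):=\omega(AN^{-1/d})\to 0$. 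Using $p/\tau=1+pm/d$, integrating over $T$ and summing over $T\in\cT_N$ (denoting by $T_z$ the simplex containing $z$) yields
\[
e_{m,\cT_N}(f)_p^{\,p} \;\geq\; \int_\Omega K_{m,p}(\bq_z)^p\,|T_z|^{pm/d}\,dz \;-\; C|\Omega|\,N^{-pm/d}\,\varepsilon(N).
\]

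The final step is H\"older's inequality with exponents $p/\tau$ and $p/(p-\tau)$, in which the second factor is designed so that the weight on $|T_z|$ is exactly $-1$. A direct check using $1/\tau=1/p+m/d$ gives $\tfrac{\tau m}{d}\cdot\tfrac{p}{p-\tau}=1$, so
\[
\int_\Omega K_{m,p}(\bq_z)^\tau\,dz \;\leq\; \Big(\int_\Omega K_{m,p}(\bq_z)^p|T_z|^{pm/d}\,dz\Big)^{\tau/p}\Big(\int_\Omega |T_z|^{-1}\,dz\Big)^{(p-\tau)/p},
\]
and the second factor equals $N^{(p-\tau)/p}$ since $\int_\Omega|T_z|^{-1}dz=\#(\cT_N)=N$. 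Solving for the first integral and using $(p-\tau)/\tau=pm/d$, I obtain
\[
\int_\Omega K_{m,p}(\bq_z)^p|T_z|^{pm/d}\,dz \;\geq\; \|K_{m,p}(\bq_z)\|_{L^\tau}^{\,p}\,N^{-pm/d}.
\]
Combining with the previous bound, taking $p$-th roots, multiplying by $N^{m/d}$, and letting $N\to\infty$ (so that $\varepsilon(N)\to 0$) gives the desired estimate \iref{CMliminftri}.

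The main obstacle is bookkeeping rather than conceptual: one must carefully track that the Taylor remainder scales with enough powers of $h_T$ to absorb into a term of order $N^{-pm/d}$ times $|T|$ after applying \iref{CMunidecaydiamtri}, and one must verify that the H\"older exponents align exactly so that the weight $|T_z|^{-1}$ integrates to $N$. Both are forced by the relation $1/\tau=1/p+m/d$, which is precisely what makes equidistribution (the case of equality in H\"older) describe the optimally adapted partition.
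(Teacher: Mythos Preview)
Your proof is correct and follows essentially the same approach the paper indicates (it defers to the argument of Theorem~\ref{CMlowerisotheorect}): a local lower bound via the shape function and the Taylor remainder, the elementary inequality $a\geq b-c\Rightarrow a^p\geq b^p-pcb^{p-1}$, integration over each element, and H\"older's inequality with the weight $|T_z|^{-1}$ integrating to $N$. The exponent bookkeeping (in particular $|T|^{(p-1)/\tau+1/p}=|T|^{1+(p-1)m/d}$ combined with $|T|\leq Ch_T^d$, and $\tau m p/(d(p-\tau))=1$) is carried out correctly.
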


\subsection{Anisotropic smoothness and cartoon functions}

Theorem \ref{CMuppersimplextheo}
reveals an improvement over the
approximation results based on adaptive isotropic partitions
in the sense that $\|K_{m,p}\(\frac {d^mf}{m!}\)\|_{L^\tau}$
may be significantly smaller than $\|d^m f\|_{L^\tau}$,
for functions which have an anisotropic behaviour.
However, this result suffers
from two major defects:
\begin{enumerate}
\item
The estimate \iref{CMlimsupsigmaNanisosimp} is asymptotic: it says that for all $\e>0$, there
exists $N_0$ depending on $f$ and $\e$ such that
$$ 
 \sigma_N(f)_p \leq  CN^{-m/d}
\( \left \|K_{m,p}\(\frac {d^m f}{m!}\) \right \|_{L^\tau}+\e \),
$$
for all $N\geq N_0$. However, it does not ensure a uniform bound on $N_0$ which may be
very large for certain $f$.\vspace{0.2cm}
\item
Theorem \ref{CMuppersimplextheo} is based on the assumption 
$f\inÊC^m(\Omega)$, and therefore the estimate \iref{CMlimsupsigmaNanisosimp} only
seems to apply to sufficiently smooth functions.
This is in contrast to the estimates
that we have obtained for adaptive isotropic partitions, which are based
on the assumption that $f\in W^{m,\tau}(\Omega)$ or $f\in B^m_{\tau,\tau}(\Omega)$.
\end{enumerate}
The first defect is due to the fact that a certain amount of refinement
should be performed before the relative variation of $d^m f$ 
is sufficiently small so that there is no ambiguity in defining the optimal
shape of the simplices. It is in that sense unavoidable. 

The second defect raises a legitimate question concerning the validity
of the convergence estimate \iref{CMlimsupsigmaNanisosimp}
for functions which are not in $C^m(\Omega)$. It suggests in particular
to introduce a class of
distributions such that
$$
\left \|K_{m,p}\(\frac {d^m f}{m!}\)\right \|_{L^\tau} <+\infty,
$$
and to try to understand if the estimate remains valid inside this class
which describe in some sense functions which have a certain amount
anisotropic smoothness. The main difficulty is that that this class is not well
defined due to the nonlinear nature of $K_{m,p}\(\frac {d^m f}{m!}\)$.
As an example consider the case of piecewise linear elements
on two dimensional triangulation, that corresponds to $m=d=2$.
In this case, we have seen that $K_{2,p}(\bq)=\kappa_p \sqrt{|{\rm det}(\bq)|}$.
The numerical quantity that governs the approximation rate $N^{-1}$
is thus
$$
A_p(f):= \left \|\sqrt {|{\rm det}(d^2f)|}\right \|_{L^\tau}, \;\; \frac 1 \tau=\frac 1 p +1.
$$
However, this quantity cannot be defined in the distribution
sense since the product of two distributions is generally ill-defined.
On the other hand, it is known that the rate $N^{-1}$
can be achieved for functions which do not have
$C^2$ smoothness, and which may even be discontinuous
along curved edges. Specifically, we say that $f$ is a
{\it cartoon function} on $\Omega$ if it is almost everywhere of the form 
$$
f=\sum_{1\leq i\leq k} f_i \Chi_{\Omega_i},
$$
where the $\Omega_i$ are disjoint open sets with piecewise $C^2$ boundary, no cusps (i.e. satisfying an interior and exterior cone condition), and such that $\overline \Omega = \cup_{i=1}^k \overline \Omega_i$, 
and where for each $1\leq i\leq k$, the function $f_i$ is $C^2$ on a neighbourhood of $\overline \Omega_i$.
Such functions are a natural candidates to represent images with sharp edges or 
solutions of PDE's with shock profiles.

Let us consider a fixed cartoon function $f$ on a polygonal domain $\Omega$
associated with a partition $(\Omega_i)_{1\leq i\leq k}$. We define 
$$
\Gamma :=\bigcup_{1\leq i\leq k} \partial \Omega_i,
$$
the union of the boundaries of the $\Omega_i$. The above definition
implies that $\Gamma$ is the disjoint union of a finite set of points $\cP$ and a finite number of open curves $(\Gamma_i)_{1\leq i\leq l}$.
$$
\Gamma = \(\bigcup_{1\leq i\leq l} \Gamma_i\) \cup \cP.
$$
If we consider the approximation of $f$ by piecewise affine
function on a triangulation $\cT_N$ of cardinality $N$, we may 
distinguish two types of elements of $\cT_N$. A 
triangle $T\in \cT_N$ is called ``regular'' if $T\cap \Gamma=\emptyset$, 
and we denote the set of such triangles by $\cT_N^r$. 
Other triangles are called ``edgy'' and their set is denoted by $\cT_N^e$.
We can thus split $\Omega$ according to
$$
\Omega:=(\cup_{T\in \cT_N^r}T) \cup (\cup_{T\in \cT_N^e}T)=\Omega_N^r \cup \Omega_N^e.
$$
We split accordingly the $L^p$ approximation error into
$$
e_{2,\cT_N}(f)_p^p = \sum_{T\in\cT_N^r}e_{2,T}(f)_p^p +\sum_{T\in\cT_N^e}e_{2,T}(f)_p^p.
$$
We may use $\cO(N)$ triangles in $\cT_N^e$ and $\cT_N^r$ (for example $N/2$
in each set). Since $f$ has discontinuities along $\Gamma$, the approximation error on 
the edgy triangles does not tend to zero in $L^\infty$ and $\cT_N^e$ should be chosen so 
that $\Omega_N^e$ has the aspect of a thin layer around $\Gamma$. 
Since $\Gamma$  is a finite union of $C^2$ curves, we can build this layer
of width $\cO(N^{-2})$ and therefore of global area $|\Omega_N^e|\leq C N^{-2}$,
by choosing long and thin triangles in $\cT_N^e$. 
On the other hand, since $f$ is uniformly $C^2$ on $\Omega_N^r$, we may
choose all triangles in $\cT_N^r$ of regular shape 
and diameter $h_T\leq C N^{-1/2}$.
Hence we obtain the following heuristic error estimate, 
for a well designed anisotropic triangulation:
$$
\begin{array}{ll}
e_{2,\cT_N}(f)_p& \leq
 \sum_{T\in\cT_N^r}|T|e_{2,T}(f)_\infty^p
 + \sum_{T\in\cT_N^e}|T|e_{2,T}(f)_\infty^p\\
 &\leq C|\Omega_N^r| (\sup_{T\in\cT_N^r}h_T^2)\|d^2f\|_{L^\infty(\Omega_N^r)}^p
 +C|\Omega_N^e| \|f\|_{L^\infty(\Omega_N^e)}^p,
 \end{array}
 $$
and therefore
\be
e_{2,\cT_N}(f)_p\leq C N^{-\min\{1,2/p\}},
 \label{CMheuristcartoon}
 \ee
where the constant $C$ depends on $\|d^2f\|_{L^\infty(\Omega\sm \Gamma)}$, $\|f\|_{L^\infty(\Omega)}$
and on the number, length and maximal curvature of the $C^2$ curves which constitute $\Gamma$.

These heuristic estimates have been discussed in
\cite{CMmallat} and rigorously proved 
in \cite{CMdeckel}.
Observe in particular that the error
is dominated by the edge contribution 
when $p>2$ and by the smooth contribution
when $p<2$. For the critical value $p=2$ the two contributions have the same order. 

For $p\geq 2$, we obtain the approximation rate $N^{-1}$ 
which suggests that approximation results such as Theorem \ref{CMuppersimplextheo} 
should also apply to cartoon functions and
that the quantity $A_p(f)$ should be
finite for such functions. In some sense, we want to ``bridge the gap''
between results of anisotropic piecewise polynomial
approximation for cartoon functions and for smooth functions. 
For this purpose, we first need to give a proper meaning
to $A_p(f)$ when $f$ is a cartoon function. As already explained,
this is not straightforward, due to the fact that
the product of two distributions has no meaning in general.
Therefore, we cannot define $\det (d^2 f)$ in the distribution sense,
when the coefficients of $d^2 f$ are distributions without sufficient smoothness.

We describe a solution to this problem proposed in \cite{CMcmcart}
which is based on a regularization process. 
In the following, we consider a fixed
radial nonnegative function $\vp$ of unit integral and supported in the unit ball,
and define for all $\delta>0$ and $f$ defined on $\Omega$,
\be
\vp_\delta(z) := \frac 1 {\delta^2} \vp\left(\frac z \delta\right) \text{ and } f_\delta = f * \vp_\delta.
\label{CMdefphih}
\ee
It is then possible to gives a meaning to $A_p(f)$ based on this regularization.
This approach is additionally justified by the fact that sharp curves of discontinuity 
are a mathematical idealisation. In real world applications, 
such as photography, several physical limitations 
(depth of field, optical blurring) impose a certain level of blur on the edges.

If $f$ is a cartoon function on a set $\Omega$, 
and if $x\in \Gamma\sm\cP$, we denote by $[f](x)$ the jump of $f$
at this point.
We also denote by $|\kappa(x)|$ the absolute value of the curvature
at $x$.
For $p\in [1,\infty]$ and $\tau$ defined by $\frac 1 \tau := 1+\frac 1 p$,
we introduce the two quantities
\begin{eqnarray*}
S_p(f) &:= &\left \|\sqrt{|\det (d^2 f) |} \right \|_{L^\tau(\Omega\sm \Gamma)}=A_p(f_{|\Omega\sm \Gamma}),\\
E_p(f) &:= & \|\sqrt{|\kappa|} [f]\|_{L^\tau (\Gamma)},\\
\end{eqnarray*}
which respectively measure the ``smooth part'' and the ``edge part'' of $f$. We
also introduce the constant
\be
C_{p,\vp}:=\|\sqrt {|\Phi\Phi'|}\|_{L^\tau (\R)},\;\; \Phi(x) := \int_{y\in\R} \vp(x,y)dy.
\label{CMdefchi}
\ee
Note that $f_\delta$ is only properly defined on the set
$$
\Omega^\delta:=\{z\in \Omega\; ; \; B(z,\delta)\subset \Omega\},
$$
and therefore, we define
$A_p(f_\delta)$ as the $L^\tau$ norm of $\sqrt{|\det(d^2f_\delta)|}$ on this set.
The following result is proved in \cite{CMcmcart}.

\begin{theorem}
\label{CMthreg}
For all cartoon functions $f$, the quantity $A_p(f_\delta)$ behaves as follows:
\begin{itemize}
\item If $p<2$, then
$$
\lim_{\delta\to 0} A_p(f_\delta) = S_p(f).
$$
\item If $p=2$, then $\tau=\frac 2 3$ and 
$$
\lim_{\delta\to 0} A_2(f_\delta) = (S_2(f)^{\tau}+ E_2(f)^{\tau} C_{2,\vp}^{\tau})^{1/\tau}.
$$
\item If $p>2$, then $A_p(f_\delta) \to \infty$ according to
$$
\lim_{\delta\to 0}  \delta^{\frac 1 2 - \frac 1 p} A_p(f_\delta) = E_p(f) C_{p,\vp}.
\label{equivAphih}
$$
\end{itemize}
\end{theorem}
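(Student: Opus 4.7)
My plan is to decompose the integration domain $\Omega^\delta$ into three regions whose contributions to $A_p(f_\delta)^\tau$ are analyzed separately: a ``smooth'' region $\Omega_\delta^s$ consisting of points at distance $>c\delta$ from $\Gamma$, an ``edge tube'' $\Omega_\delta^e$ which is a neighborhood of width $\sim\delta$ of $\Gamma\sm\cP$ (outside neighborhoods of the corner points), and a ``corner'' region $\Omega_\delta^c$ covering neighborhoods of the finitely many points of $\cP$ whose total area can be made $O(\delta^2)$. On the smooth region, $f_\delta$ agrees with a mollification of one of the pieces $f_i$, so $d^2f_\delta \to d^2 f$ uniformly on compact subsets of each $\Omega_i$, and dominated convergence (using a uniform bound $|d^2 f_\delta|\leq \|d^2 f\|_{L^\infty(\Omega\sm\Gamma)}$) gives
$$\int_{\Omega_\delta^s}|\det d^2f_\delta|^{\tau/2}\to S_p(f)^\tau.$$
For the corner region, a crude bound $|d^2 f_\delta|\lesssim\delta^{-2}$ and $|\Omega_\delta^c|\lesssim\delta^2$ yields a contribution $O(\delta^{2-2\tau})$ which is negligible for every $p\in[1,\infty]$ (since $2-2\tau>1-3\tau/2$).

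The core of the proof is the edge-tube analysis. Near a point $z_0\in\Gamma\sm\cP$ I straighten $\Gamma$ locally: choose Cartesian coordinates in which $z_0$ is the origin, $\Gamma$ is tangent to the $x$-axis, and locally is given by $y = \tfrac{\kappa(s)}{2}x^2 + O(x^3)$, with smooth parts $f_\pm$ on either side. Writing $f = f_- + [f]\,H(n)$ with $n$ the signed distance to $\Gamma$ and $[f]$ extended smoothly, the convolution $f_\delta$ equals $(f_-)_\delta + [f_\delta]^\sharp$ where $[f_\delta]^\sharp$ is the mollified jump. A direct differentiation of
$$[f_\delta]^\sharp(x,y) = [f]\int \vp_\delta(x',y-\tfrac{\kappa}{2}(x-x')^2)\,dx' \cdot (\text{CDF factor})$$
together with the symmetry $\vp(-x,y)=\vp(x,y)$ gives at points of the tube over $z_0$, to leading order in $\delta$,
$$\partial_{yy}^2 f_\delta \sim \frac{[f]}{\delta^2}\Phi'\!\left(\tfrac{n}{\delta}\right),\quad \partial_{xx}^2 f_\delta \sim -\frac{[f]\kappa}{\delta}\Phi\!\left(\tfrac{n}{\delta}\right),\quad \partial_{xy}^2 f_\delta = O(1),$$
whence $|\det d^2 f_\delta|^{1/2}\sim \delta^{-3/2}|[f]|\sqrt{|\kappa|}\sqrt{|\Phi\Phi'|(n/\delta)}$. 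The smooth terms $(f_\pm)_\delta$ only perturb this by an $O(\delta^{-1})$ quantity, hence lower order. Raising to the $\tau$-power, integrating in $n$ via $u=n/\delta$ (picking up a factor $\delta$), and integrating in arclength $s$ along $\Gamma$, I obtain
$$\int_{\Omega_\delta^e}|\det d^2 f_\delta|^{\tau/2}= \delta^{1-3\tau/2}\bigl(E_p(f)^\tau C_{p,\vp}^\tau+o(1)\bigr).$$

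The three cases of the theorem then fall out by comparing the power $\delta^{1-3\tau/2}$ with $\delta^0$ from the smooth region. Using $\tau=p/(p+1)$, one has $1-3\tau/2>0$ iff $p<2$, $=0$ iff $p=2$, and $<0$ iff $p>2$: for $p<2$ the edge (and corner) contribution vanishes and only $S_p(f)$ survives; for $p=2$ the two terms add at the level of the $\tau$-power, giving $(S_2(f)^\tau+E_2(f)^\tau C_{2,\vp}^\tau)^{1/\tau}$; for $p>2$ the edge term dominates and the normalization $\delta^{1/2-1/p}$ absorbs the exponent $(1-3\tau/2)/\tau=1/p-1/2$. The principal obstacle I foresee is rigorizing the asymptotic expansion of $d^2f_\delta$ inside the tube in a way that controls the remainder \emph{uniformly in $s$ along $\Gamma$} despite non-constant $[f](s)$, $\kappa(s)$, and the contribution of the smooth pieces $f_\pm$; this requires careful localization and a quantitative version of the ``edge-looks-straight-at-scale-$\delta$'' heuristic via a dilation argument, together with a dominated-convergence step in the $s$-variable using the $C^2$-regularity of $\Gamma$ to bound the curvature-curvature-derivative corrections by $o(\delta^{-3/2})$ in the relevant $L^\tau$ sense.
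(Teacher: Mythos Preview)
The paper does not actually prove this theorem: immediately before the statement it says ``The following result is proved in \cite{CMcmcart}'', and no argument is given afterwards beyond two remarks. There is therefore nothing in this paper to compare your proposal against.

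That said, your outline is the natural strategy and matches what one would expect the cited reference to contain. The decomposition into smooth region, edge tube, and corner neighbourhoods is correct, and the key computation --- that in adapted coordinates near a smooth edge point one has $\partial_{nn}^2 f_\delta \sim [f]\delta^{-2}\Phi'(n/\delta)$, $\partial_{tt}^2 f_\delta \sim -[f]\kappa\,\delta^{-1}\Phi(n/\delta)$ with negligible cross term, giving $|\det d^2 f_\delta|^{1/2}\sim \delta^{-3/2}|[f]|\sqrt{|\kappa|}\sqrt{|\Phi\Phi'|(n/\delta)}$ --- is exactly what drives the result. Your exponent bookkeeping is also right: with $\tau=p/(p+1)$ one gets $1-3\tau/2=(2-p)/(2(p+1))$ and $(1-3\tau/2)/\tau=1/p-1/2$, and the corner bound $O(\delta^{2-2\tau})$ is indeed lower order than $\delta^{1-3\tau/2}$ since $1-\tau/2>0$.

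The one place I would urge more care is the claim $\partial_{xy}^2 f_\delta=O(1)$ in the tube. Differentiating the mollified Heaviside along the curve in the tangential direction picks up a factor involving the variation of the normal direction (i.e.\ curvature) times $\delta^{-1}\Phi'(n/\delta)$, so a priori the mixed derivative is $O(\delta^{-1})$, not $O(1)$. This does not destroy the argument --- the determinant is still dominated by the product of the two diagonal terms, since $(\delta^{-1})^2=\delta^{-2}$ is lower order than $\delta^{-2}\cdot\delta^{-1}=\delta^{-3}$ --- but you should state the correct order and verify that the cross term really drops out of $\det d^2 f_\delta$ at leading order. Your own caveat about uniformity in $s$ is well placed; a clean way to handle it is to freeze $[f]$ and $\kappa$ at the base point, estimate the resulting error on each tube segment of length $O(1)$, and use the $C^2$ regularity of $\Gamma$ plus continuity of $[f]$ and $\kappa$ to pass to the limit by dominated convergence in the arclength integral.
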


\begin{remark}
This theorem reveals that as $\delta\to 0$, the 
contribution of the neighbourhood of $\Gamma$ to $A_p(f_\delta)$
is neglectible when $p<2$ and 
dominant when $p>2$, which was already remarked in
the heuristic computation leading to {\rm \iref{CMheuristcartoon}}.
\end{remark}

\begin{remark}
In the case $p=2$, it is interesting to compare the limit expression 
$(S_2(f)^{\tau}+ E_2(f)^{\tau} C_{2,\vp}^{\tau})^{1/\tau}$
with the {\it total variation} $TV(f)=|f|_{BV}$.
For a cartoon function, the total variation also
can be split into a contribution of the smooth part and
a contribution of the edge, according to
$$
TV(f):=\int_{\Omega\sm\Gamma}|\nabla f| +\int_\Gamma |[f]|.
$$
Functions of bounded variation are thus allowed to have jump discontinuities
along edges of finite length. For this reason, $BV$
is frequently used as a natural smoothness space 
to describe the mathematical properties of images.
It is also well known that $BV$ is a regularity space for certain 
hyperbolic conservation law, in the sense that the total variation
of their solutions remains finite for all time $t>0$. 
In recent years, it has been observed that the space $BV$ (and more generally
classical smoothness spaces) do not provide a fully satisfactory description of
piecewise smooth functions arising in the above mentionned applications,
in the sense that the total variation only takes into account
the size of the sets of discontinuities and not their geometric smoothness.
In contrast, we observe that the term
$E_2(f)$ 
incorporates an information on the smoothness of $\Gamma$ through
the presence of the curvature $|\kappa|$. The quantity
$A_2(f)$ appears therefore as a potential substitute
to $TV(f)$ in order to take into account the geometric smoothness
of the edges in cartoon function and images.
\end{remark}

\section{Anisotropic greedy refinement algorithms}

In the two previous sections, we have established
error estimates in $L^p$ norms for the approximation
of a function $f$ by piecewise polynomials on
optimally adapted anisotropic partitions. 
Our analysis reveals that the optimal
partition needs to 
satisfy two intuitively desirable features:
\begin{enumerate}
\item
Equidistribution of the local error.
\item
Optimal shape adaptation of each element based on the local properties of $f$.
\end{enumerate}
For instance, in the case of piecewise affine approximation
on triangulations, these items mean that each triangle
$T$ should be close to equilateral with respect to
a distorted metric induced by the local value of
the hessian $d^2f$. 

From the computational viewpoint, a commonly used strategy
for designing an optimal triangulation consists therefore in
evaluating the hessian $d^2f$ and imposing that 
each triangle is isotropic with respect
to a metric which is properly related to its local value.
We refer in particular to \cite{CMbfgls}
and to \cite{CMbois} where this program is executed 
by different approaches, both based on
Delaunay mesh generation techniques (see also
the software package \cite{CMbamg} which
includes this type of mesh generator).
While these algorithms produce 
anisotropic meshes which are naturally
adapted to the approximated function,
they suffer from two intrinsic limitations:
\begin{enumerate}
\item
They are based on the data of $d^2f$,
and therefore do not apply well
to non-smooth or noisy functions. 
\item
They are non-hierarchical: for $N>M$, the triangulation
$\cT_N$ is not a refinement of $\cT_M$. 
\end{enumerate}
Similar remark apply to anisotropic mesh generation techniques
in higher dimensions or for finite elements of higher degree. 

The need for hierarchical partitions is critical
in the construction of wavelet bases, which play
an important role in applications
to image and terrain data processing,
in particular data compression \cite{CMcddd}.
In such applications, the multilevel structure
is also of key use for the fast encoding
of the information. Hierarchy is also useful in the 
design of optimally converging adaptive
methods for PDE's \cite{CMbdd,CMmns,CMste}.
However, all these developments are so 
far mostly limited to isotropic refinement methods,
in the spirit of the refinement procedures discussed in
\S 3. Let us mention that hierarchical {\it and} anisotropic
triangulations have been investigated in \cite{CMkp},
yet in this work the triangulations are {\it fixed in advance}
and therefore generally not adapted to the approximated function.
\nl
\nl
{\it A natural objective is therefore to design
adaptive algorithmic techniques that combine
hierarchy and anisotropy, that apply
to any function $f\in L^p(\Omega)$, and that
lead to optimally adapted partitions.}
\nl
\nl
In this section, we discuss anisotropic
refinement algorithms which fullfill this objective.
These algorithms have been introduced and studied
in \cite{CMcdhm} for piecewise polynomial approximation
on two-dimensional triangulations. In the
particular case of piecewise affine elements,  it was
proved in \cite{CMcm} that they lead to optimal
error estimates.  The main idea is again to
refine the element $T$ that maximizes the
local error $e_{m,T}(f)_p$, but to allow
several scenarios of refinement for this element.
Here are two typical instances in two dimensions:
\begin{enumerate}
\item
For rectangular partitions, we allow to split each rectangle 
into two rectangles of equal size
by either a vertical or horizontal cut.
There are therefore two splitting scenarios.
\item 
For triangular partitions, we allow to bisect each triangle
from one of its vertex towards the mid-point of the opposite
edge. There are therefore three splitting scenarios.
\end{enumerate}
We display on Figure \ref{CManisopart} two examples
of anisotropic partitions respectively obtained by such
splitting techniques.
\begin{figure}
\centerline{
\includegraphics[width=4cm,height=4cm]{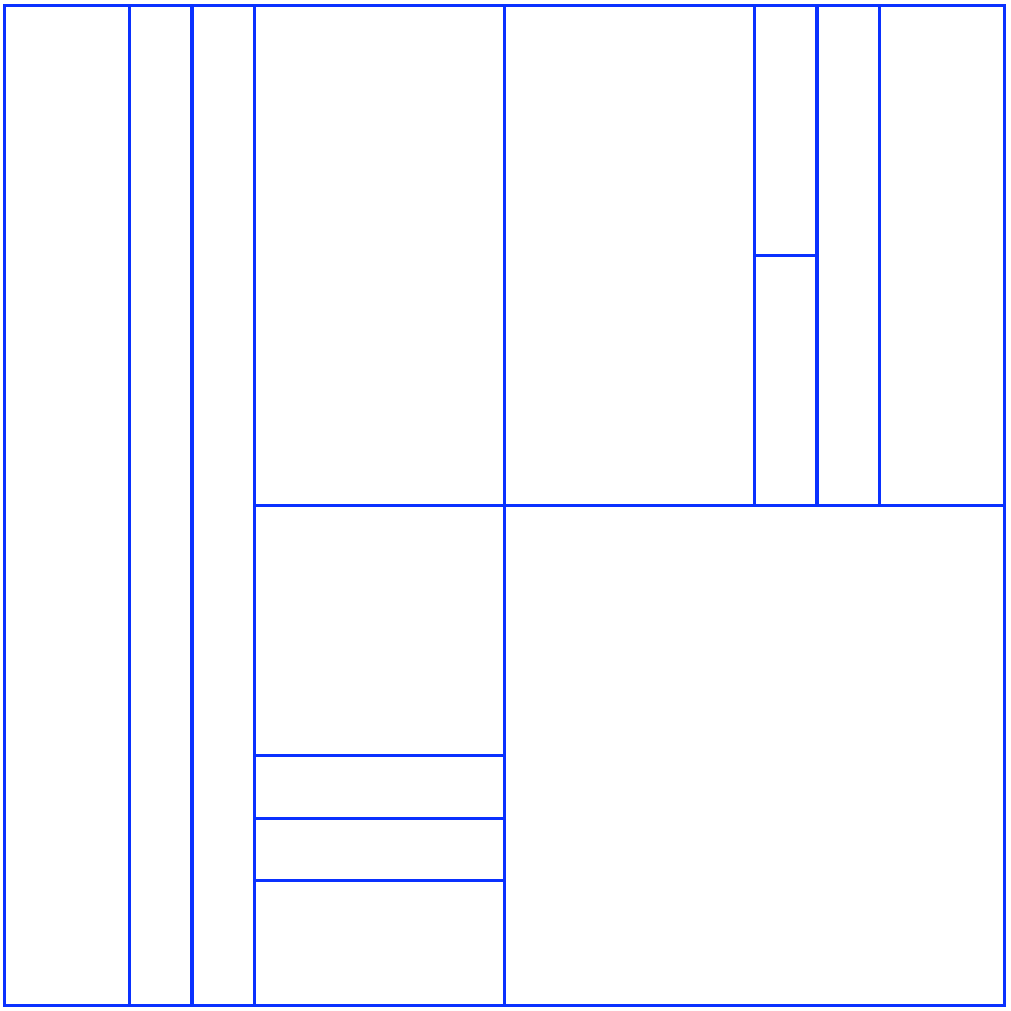}
\hspace{1cm}
\includegraphics[width=4cm,height=4cm]{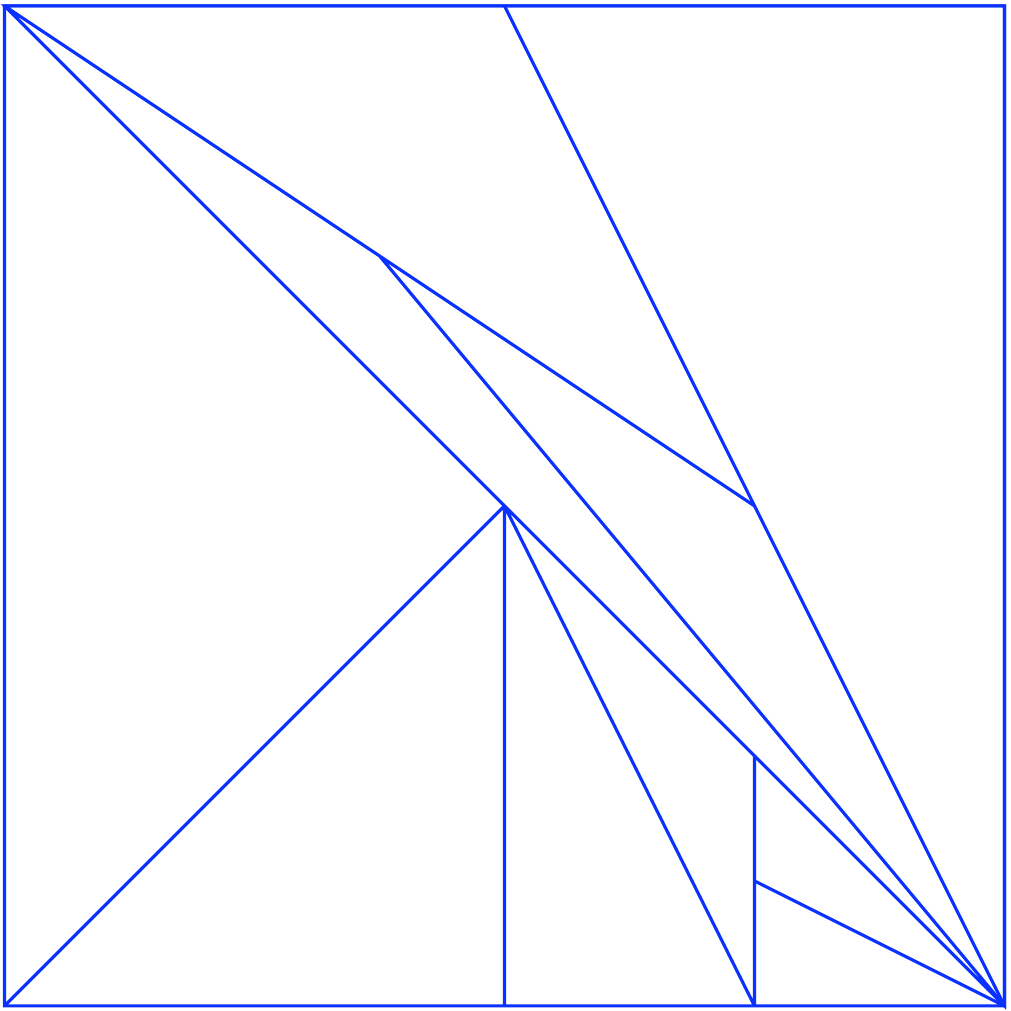}
}
\caption{Anisotropic partitions obtained by rectangle split (left)
and triangle bisection (right)}
\label{CManisopart}
\end{figure}
The choice between the different splitting scenarios is done by
a {\it decision rule} which depends on the function $f$.
A typical decision rule is to select the split which
best decreases the local error. The greedy refinement algorithm
therefore reads as follows:
\begin{enumerate}
\item
Initialization: $\cT_{N_0}=\cD_0$ with $N_0:=\#(\cD_0)$.\vspace{0.2cm}
\item
Given $\cT_N$ select $T\in\cT_N$ that maximizes $e_{m,T}(f)_T$.\vspace{0.2cm}
\item
Use the decision rule in order to select the type of split to be performed on $T$.\vspace{0.2cm}
\item
Split $T$ into $K$ elements to obtain $\cT_{N+K-1}$ and return to step 2.
\end{enumerate}
Intuitively, the error equidistribution is ensured by
selecting the element that maximizes the local
error, while the role of the decision rule is to optimize
the shape of the generated elements.

The problem is now to understand if the piecewise
polynomial approximations generated by such 
refinement algorithms satisfy similar convergence
properties as those which were established  
in \S 4 and \S 5 when using optimally adapted 
partitions. We first study the anisotropic refinement algorithm 
for the simple case of piecewise constant
on rectangles, and we give a complete proof of 
its optimal convergence properties. We then
present the anisotropic refinement algorithm for
piecewise polynomials on triangulations,
and give without proof the available results
on its optimal convergence properties.

\begin{remark}
Let us remark that in contrast to the refinement
algorithm discussed in \S 2.3 and 3.3,
the partition $\cT_N$ may not anymore be identified to a 
finite subtree within a fixed infinite master tree $\cM$. Instead, 
for each $f$, the decision rule defines an infinite master
tree $\cM(f)$ that {\it depends} on $f$. The refinement
algorithm corresponds to selecting a finite subtree
within $\cM(f)$. Due to the finite number of splitting 
possibilities for each element, this finite subtree
may again be encoded by a number of bits proportional to $N$.
Similar to the isotropic refinement
algorithm, one may use more sophisticated
techniques such as CART in order to select an optimal partition
of $N$ elements within $\cM(f)$. On the other hand the selection
of the optimal partition within {\it all} possible splitting scenarios
is generally of high combinatorial complexity.
\end{remark}

\begin{remark}
A closely related algorithm was introduced in
\cite{CMdl} and studied in \cite{CMddp}. In this algorithm
every element is a convex polygon which may be
split into two convex polygons by an arbitrary line cut, allowing therefore
an infinite number of splitting scenarios. The
selected split is again typically the one that 
decreases most the local error. Although
this approach gives access to more possibilities
of anisotropic partitions, the analysis of its convergence
rate is still an open problem.
\end{remark}

\subsection{The refinement algorithm for piecewise constants on rectangles}

As in \S 4, we work on the square domain $\Omega=[0,1]^2$
and we consider piecewise constant approximation on 
anisotropic rectangles. At a given stage of the refinement
algorithm, the rectangle $T=I\times J$ that maximizes $e_{1,T}(f)_p$
is split either vertically or horizontally,
which respectively corresponds to split one interval among $I$ and $J$ into two intervals
of equal size and leaving the other interval unchanged.
As already mentionned in the case of the refinement
algorithm discussed in \S 3.3, we may replace
$e_{1,T}(f)_p$ by the more computable quantity
$\|f-P_{1,T}f\|_p$ for selecting the rectangle $T$
of largest local error. Note that the $L^2(T)$-projection onto
constant functions is simply the average of $f$ on $T$:
$$
P_{1,T}f= \frac 1 {|T|}\int_T f.
$$
If $T$ is the rectangle that is selected for
being split, we denote by
$(T_d,T_u)$ the down and up rectangles
which are obtained by a horizontal split of $T$
and by $(T_l,T_r)$ the left and right rectangles
which are obtained by a vertical split of $T$.
The most natural decision rule for selecting the
type of split to be performed on $T$ is based on comparing
the two quantities
$$
e_{T,h}(f)_{p}:=\(e_{1,T_d}(f)_p^p+e_{1,T_u}(f)_p^p\)^{1/p}\;\;{\rm and}\;\;
e_{T,v}(f)_{p}:=\(e_{1,T_l}(f)_p^p+e_{1,T_r}(f)_p^p\)^{1/p},
$$
which represent the local approximation error after splitting
$T$ horizontally or vertically, with the standard
modification when $p=\infty$. The decision rule
based on the $L^p$ error is therefore :
\nl
\nl
{\it If $e_{T,h}(f)_p\leq e_{T,v}(f)_p$,
then $T$ is split horizontally, otherwise $T$ is split 
vertically.}
\nl
\nl
As already explained, the role of the decision rule is to optimize
the shape of the generated elements. We have seen in
\S 4.1 that in the case where $f$ is an
affine function 
$$
q(x,y)=q_0+q_x x+ q_y y,
$$
the shape of a rectangle $T=I\times J$ which is
optimally adapted to $q$ is given by the relation \iref{CMoptiaspectrect}.
This relation cannot be exactly fullfilled by the rectangles
generated by the refinement algorithm since
they are by construction dyadic type, and in particular
$$
\frac{|I|}{|J|}=2^{j},
$$
for some $j\in\ZZ$.
We can measure the adaptation of $T$
with respect to $q$ by the quantity
\be
a_q(T):=\left |\log_2\(\frac {|I|\, |q_x|}{|J|\, |q_y|}\) \right |,
\label{CMoptiaspectmeasrect}
\ee
which is equal to $0$ for optimally adapted rectangles
and is small for ``well adapted'' rectangles. Inspection 
of the arguments leading the heuristic error
estimate \iref{CMidealanirecterr} in \S 4.1 
or to the more rigourous estimate
\iref{CMlimsupsigmaNanisorect}
in Theorem \ref{CMupperecttheo} reveals that
these estimates also hold up to a fixed multiplicative
constant if we use rectangles which have well adapted
shape in the sense that $a_{q_T}(T)$ is uniformly bounded
where $q_T$ is the approximate value of $f$ on $T$.

We notice that for all $q$ such that $q_xq_y\neq 0$, 
there exists at least a dyadic rectangle $T$ such
that $a_T(q)\leq \frac 1 2$. We may therefore hope that
the refinement algorithm leads to optimal error
estimate of a similar form as \iref{CMlimsupsigmaNanisorect},
provided that the decision rule tends to generate
well adapted rectangles. The following result shows that this is 
indeed the case when $f$ is exactly
an affine function, and when using the decision rule
either based on the $L^2$ or $L^\infty$ error.
 
\begin{proposition}
\label{CMadaptrectprop}
Let $q\in \PP_1$ be an affine function and let $T$ be a 
rectangle. If $T$ is split according to the
decision rule either based on the $L^2$ or $L^\infty$
error for this function and if $T'$ a child of $T$ obtained from this splitting,
one then has
\be
a_{q}(T')\leq |a_q(T)-1|.
\label{CMreductratio}
\ee
As a consequence, all rectangles
obtained after sufficiently many refinements
satisfy $a_q(T)\leq 1$.
\end{proposition}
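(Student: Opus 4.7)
My plan is to reduce everything to explicit computations for the homogeneous part $\bq := q_x x + q_y y$ (since $e_{1,T}(q)_p = e_{1,T}(\bq)_p$ by \iref{CMqbq}) and then analyze a simple one-dimensional iteration. Using the paper's notation $X := |q_x|\,|I|/2$ and $Y := |q_y|\,|J|/2$ for a rectangle $T = I\times J$, and centering $T$ at the origin via \iref{CMinvartransrect}, the key parameter is $\alpha := \log_2(X/Y)$, in terms of which $a_q(T)=|\alpha|$ by definition \iref{CMoptiaspectmeasrect}.

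Step 1: analyze which split the decision rule selects. The computation of \S 4.1 gives $e_{1,T}(\bq)_\infty = X+Y$, and the same argument applied on a half-rectangle yields $e_{T,v}(f)_\infty = X/2+Y$ and $e_{T,h}(f)_\infty = X+Y/2$. Hence the decision rule selects vertical precisely when $X \geq Y$, i.e.\ $\alpha \geq 0$, and horizontal when $X \leq Y$. For the $L^2$ case, a direct integration gives $e_{1,T}(\bq)_2^2 = |T|(X^2+Y^2)/3$, and hence $e_{T,v}^2 = |T|(X^2+4Y^2)/12$, $e_{T,h}^2=|T|(4X^2+Y^2)/12$; the condition $e_{T,h}\leq e_{T,v}$ reduces to $3X^2\leq 3Y^2$, so the $L^2$ and $L^\infty$ rules coincide.

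Step 2: track the effect of a split on $a_q$. Vertical splitting halves $|I|$ and hence $X$, giving $\alpha' = \alpha - 1$ and $a_q(T') = |\alpha - 1|$; horizontal splitting similarly gives $a_q(T') = |\alpha+1|$. Combining with Step 1: if $\alpha \geq 0$ the rule picks vertical, yielding $a_q(T') = |\alpha - 1| = |\,|\alpha|-1\,| = |a_q(T)-1|$; if $\alpha \leq 0$ it picks horizontal, yielding $a_q(T') = |\alpha+1| = |\,{-}|\alpha|+1\,| = |a_q(T)-1|$. In both cases \iref{CMreductratio} holds, in fact with equality.

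For the consequence, I would iterate the map $\Phi : a \mapsto |a-1|$ on $[0,\infty)$. Since $\Phi([0,1]) \subset [0,1]$, once the value enters $[0,1]$ it stays there for all further splits; and for $a_0 > 1$ we have $\Phi(a_0) = a_0 - 1$, so at most $\lceil a_0 \rceil$ iterations suffice to land in $[0,1]$. Hence any rectangle obtained from an ancestor $T_0$ after at least $\lceil a_q(T_0) \rceil$ successive refinements satisfies $a_q(T) \leq 1$. The main obstacle is essentially bookkeeping: checking that for both $p=2$ and $p=\infty$ the sign of $\alpha$ matches the split selected by the rule. A minor point to acknowledge is the degenerate case $q_x q_y = 0$, where $a_q \equiv +\infty$ and \iref{CMreductratio} is vacuous, but the algorithm will, after the first split in the degenerate direction, produce an affine function on which subsequent behaviour must be analyzed separately.
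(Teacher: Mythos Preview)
Your proposal is correct and follows essentially the same approach as the paper: compute the local $L^\infty$ and $L^2$ errors explicitly in terms of $X=|q_x||I|/2$ and $Y=|q_y||J|/2$, show that both decision rules select the split that halves the larger of the two, and conclude that $a_q(T')=|a_q(T)-1|$. Your iteration argument for the consequence is also the natural one and matches the paper's reasoning.
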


\begin{proof}
We first observe that if $T=I\times J$,
the local $L^{\infty}$ error is given by
$$
e_{1,T}(q)_\infty:=\frac 1 2\max\{|q_x|\, |I|, |q_y|\, |J|\},
$$
and the local $L^2$ error is given by
$$
e_{1,T}(q)_2:=\frac 1 {4\sqrt 3}(q_x^2 |I|^2+q_y^2|J|^2)^{1/2}.
$$
Assume that $T$ is such that $|I|\, |q_x| \geq |J|\, |q_y|$.
In such a case, we find that
$$
e_{T,v}(q)_\infty=\frac 1 2\max\{|q_x|\, |I|, |q_y|\, |J|/2\}=|q_x|\, |I|/2,
$$
and
$$
e_{T,h}(q)_\infty=\frac 1 2\max\{|q_x|\, |I|/2, |q_y|\, |J|\} \leq |q_x|\, |I|/2.
$$
Therefore $e_{T,h}(q)_\infty\leq e_{T,v}(q)_\infty$ which shows that
the horizontal cut is selected
by the decision rule based on the $L^\infty$ error.
We also find that
$$
e_{T,v}(q)_2:=\frac 1 {\sqrt 6}(q_x^2 |I|^2+q_y^2|J|^2/4)^{1/2},
$$
and
$$
e_{T,h}(q)_2:=\frac 1 {\sqrt 6}(q_x^2 |I|^2/4+q_y^2|J|^2)^{1/2},
$$
and therefore $e_{T,h}(q)_2\leq e_{T,v}(q)_2$ which shows that
the horizontal cut is selected
by the decision rule based on the $L^2$ error. Using
the fact that
$$
\log_2\(\frac {|I|\, |q_x|}{|J|\, |q_y|}\) \geq 0,
$$
we find that if $T'$ is any of the two rectangle generated
by both decision rules, we
have $a_{q}(T')=a_q(T)-1$ if $a_q(T)\geq 1$
and $a_{q}(T')=1-a_q(T)$ if $a_q(T)\leq 1$. In the case
where $|I|\, |q_x|< |J|\, |q_y|$, we reach a similar
conclusion observing that the vertical cut is selected
by both decision rules. This proves \iref{CMreductratio}
\hfill $\Box$
\end{proof}

\begin{remark}
We expect that the above result also holds
for the decision rules based on the $L^p$ error
for $p\notin\{2,\infty\}$ which therefore also lead 
to well adapted rectangles when $f$ is an affine. In this sense
all decision rules are equivalent, and it is reasonable
to use the simplest rules based on the $L^2$ or $L^\infty$
error in the refinement algorithm 
that selects the rectangle which maximizes
$e_{1,T}(f)_p$, even when $p$ differs from $2$ or $\infty$.
\end{remark}

\subsection{Convergence of the algorithm}

From an intuitive point of view,
we expect that when we apply the refinement
algorithm to an arbitrary function $f\in C^1(\Omega)$, 
the rectangles tend to adopt a locally
well adapted shape, provided that the 
algorithm reaches a stage where 
$f$ is sufficiently close to an affine
function on each rectangle. However this
may not necessarily happen due to
the fact that we are not ensured that the 
diameter of all the elements tend to $0$
as $N\to \infty$. Note that this is not
ensured either for greedy
refinement algorithms based on isotropic
elements. However, we have used
in the proof of Theorem \ref{CMupperisotheo}
the fact that for $N$ large enough, a fixed portion - say $N/2$ - 
of the elements have arbitrarily
small diameter, which is not anymore guaranteed 
in the anisotropic setting.  

We can actually
give a very simple example
of a smooth function $f$ for which
the approximation produced by
the anisotropic greedy refinement algorithm {\it fails to
converge} towards $f$ due to this problem.
Let $\vp$ be a smooth function of 
one variable which is compactly supported
on $]0,1[$ and positive. We then
define $f$ on $[0,1]^2$ by
$$
f(x,y):=\vp(4x)-\vp(4x-1).
$$
This function is supported in $[0,1/2]\times[0,1]$.
Due to its particular structure, we find that
if $T=[0,1]^2$, the best approximation
in $L^p(T)$ is achieved by the constant $c=0$
and one has
$$
e_{1,T}(f)_p=2^{1/p}\|\vp\|_{L^p}.
$$ 
We also find that $c=0$ is the best approximation
on the four subrectangles $T_d$, $T_u$, $T_l$ and $T_r$
and that $e_{T,h}(f)_p=e_{T_v}(f)_p=e_{1,T}(f)_p$
which means both horizontal and vertical split do not reduce the error.
According to the decision rule, the
horizontal split is selected. We are then
facing a similar situation on $T_d$ and $T_u$
which are again both split horizontally. Therefore,
after $N-1$ greedy refinement steps, the
partition $\cT_N$ consists of rectangles all of the form
$[0,1]\times J$ where $J$ are dyadic intervals,
and the best approximation remains $c=0$ on
each of these rectangles. This shows that
the approximation produced by
the algorithm fails to
converge towards $f$, and the global error remains
$$
e_{1,\cT_N}(f)_p=2^{1/p}\|\vp\|_{L^p},
$$
for all $N>0$.

The above example illustrates the fact that
the anisotropic greedy refinement algorithm
may be defeated by simple functions 
that exhibit an oscillatory behaviour.
One way to correct this defect is to impose
that the refinement of $T=I\times J$ reduces its largest side-length
the case where the refinement suggested by the original decision
rule does not sufficiently reduce the local error. This means
that we modify as follow the decision rule: 
\nl
\nl
{\it Case 1:} if $\min\{e_{T,h}(f)_p,e_{T,v}(f)_p\}\leq \rho e_{1,T}(f)_p$,
then $T$ is split horizontally
if $e_{T,h}(f)_p\leq e_{T,v}(f)_p$ or vertically if $e_{T,h}(f)_p> e_{T,v}(f)_p$.
We call this a {\it greedy split}.
\nl
\nl
{\it Case 2:} if $\min\{e_{T,h}(f)_p,e_{T,v}(f)_p\}> \rho e_{1,T}(f)_p$,
then $T$ is split horizontally
if $|I| \leq |J|$ or vertically if $|I|> |J|$.
We call this a {\it safety split}.
\nl
\nl
Here $\rho$ is a parameter chosen in $]0,1[$. It should not be chosen too
small in order to avoid that all splits are of safety type which would then
lead to isotropic partitions. Our next result shows that 
the approximation produced by the
modified algorithm does converge towards $f$.

\begin{theorem}
\label{CMtheogreedyconv}
For any $f\in L^p(\Omega)$ or in $C(\Omega)$
in the case $p=\infty$, the partitions
$\cT_N$ produced by the modified greedy refinement 
algorithm with parameter $\rho\in ]0,1[$
satisfy 
\be
\label{CMgreedyconvp}
\lim_{N\to +\infty} e_{1,\cT_N}(f)_p=0.
\ee
\end{theorem}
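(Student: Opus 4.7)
My plan proceeds in three steps. The quantities to track are the total $p$-th-power error $E_N^p := \sum_{T\in\cT_N} e_{1,T}(f)_p^p$ and the maximum local error $\eta_N := \max_{T\in\cT_N} e_{1,T}(f)_p$ (with the obvious modifications for $p=\infty$). Since splitting $T$ can only reduce its optimal piecewise-constant approximation, $E_N^p$ is nonincreasing, and since the algorithm always refines the element realizing $\eta_N$, so is $\eta_N$.

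The main analytic work is to show $\eta_N \to 0$. Every greedy split of the selected $T$ has $\min(e_{T,h}(f)_p, e_{T,v}(f)_p) \le \rho\, e_{1,T}(f)_p$, so the two children satisfy $e_{1,T_1}(f)_p^p + e_{1,T_2}(f)_p^p \le \rho^p e_{1,T}(f)_p^p$ and hence $E_N^p - E_{N+1}^p \ge (1-\rho^p)\eta_N^p$, while a safety step gives $E_N^p - E_{N+1}^p \ge 0$. Telescoping yields $\sum_{N\in\text{greedy}} \eta_N^p \le E_0^p/(1-\rho^p) < \infty$. Suppose for contradiction $\eta_N \ge \alpha > 0$ for all large $N$. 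Then from some stage $N_1$ on, every split is a safety split of an element with error at least $\alpha$. Each safety split halves the longer side, so $|T^{(i)}| = 2^{-i}|T^{(0)}|$ along any descendant chain. By pigeonhole some $T_0 \in \cT_{N_1}$ has infinitely many split descendants, and K\"onig's lemma applied to the binary subtree of ever-split descendants of $T_0$ produces an infinite descending chain $T_0 \supset T^{(1)} \supset T^{(2)} \supset \cdots$, each $T^{(i)}$ of error at least $\alpha$ and of area tending to $0$. For $p<\infty$, absolute continuity of $\int|f|^p$ forces $\|f\|_{L^p(T^{(i)})} \to 0$, so $e_{1,T^{(i)}}(f)_p \le \|f\|_{L^p(T^{(i)})} \to 0$; for $p = \infty$, consecutive safety splits also drive $\diam(T^{(i)}) \to 0$ and uniform continuity of $f$ yields $e_{1,T^{(i)}}(f)_\infty \to 0$. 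Either outcome contradicts the bound $\alpha$.

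It remains to upgrade $\eta_N \to 0$ to $e_{1,\cT_N}(f)_p \to 0$. For $p = \infty$ this is immediate, since $e_{1,\cT_N}(f)_\infty = \eta_N$. For $p<\infty$ the summation in $E_N^p$ over a growing number of elements obstructs a purely elementary conclusion, and I would use a martingale argument. The conditional expectations $f_N := \mE[f\mid\sigma(\cT_N)]$ equal $\bar f_T$ on each $T$ and form a martingale along the increasing filtration $\sigma(\cT_N)\subset\sigma(\cT_{N+1})$; Doob's theorem gives $f_N \to F := \mE[f\mid \sigma_\infty]$ in $L^p$ for $p > 1$, with a truncation/density reduction to $L^\infty$ for the case $p=1$. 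Since the best $L^p$ constant on each cell is at least as good as the mean, $e_{1,\cT_N}(f)_p \le \|f-f_N\|_p$, and the proof reduces to checking $F = f$. On terminal rectangles (never split after some stage), $\eta_N\to 0$ forces $e_{1,T^\ast}(f)_p = 0$, so $f$ is a.e.\ constant there and trivially $\sigma_\infty$-measurable. On the transient complement, $|T_N(x)|\to 0$, and one uses that along any descendant chain either greedy splits drive $e_{1,T_N(x)}(f)_p$ geometrically to $0$ (forcing $f$ to be essentially independent of the unresolved direction along the chain) or safety splits eventually fire and shrink the diameter, which together let one recover $f$ from the $\sigma_\infty$-conditional expectation.

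The main obstacle is precisely the identification $F = f$ for $p<\infty$. The counterexample $f(x,y)=\vp(4x)-\vp(4x-1)$ preceding the theorem shows that for the unmodified algorithm the filtration can freeze into horizontal strips and $\mE[f\mid\sigma_\infty]=\mE[f\mid y]=0\ne f$, so the argument must make essential use of Case~2 of the decision rule. The geometric content is a dichotomy: on any refinement chain of a transient rectangle, either greedy splits drive the local error to $0$ (forcing $f$ to be essentially independent of the unresolved direction), or safety splits force directional refinement and shrink the diameter. Turning this dichotomy into the measurability statement $\sigma_\infty \supseteq \sigma(f)$ modulo null sets is the hardest part of the proof.
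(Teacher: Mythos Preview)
Your argument that $\eta_N \to 0$ is correct (and takes a different route from the paper, via telescoping of the total error and K\"onig's lemma), and for $p=\infty$ this already finishes the proof. The gap is in upgrading $\eta_N \to 0$ to $e_{1,\cT_N}(f)_p \to 0$ for $p<\infty$. You yourself flag the identification $\mE[f\mid\sigma_\infty]=f$ as the hard part and do not actually prove it. Your dichotomy does not settle it: along a transient chain with $|I_N|\to 0$ and $|J_N|\ge c>0$, all splits are eventually greedy (once $|I_N|<|J_N|$ a safety split would halve $J_N$), and the geometric decay of $e_{1,T_N(x)}(f)_p$ you obtain is no stronger than $\eta_N\to 0$, which for $p<\infty$ merely reflects $|T_N(x)|\to 0$ and says nothing about $\sigma_\infty$-measurability of $f$. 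Indeed, a sequence of horizontal-strip partitions $[0,1]\times[k/N,(k+1)/N]$ applied to $f(x,y)=g(x)$ satisfies $\eta_N\to 0$ while $e_{1,\cT_N}(f)_p$ stays constant, so $\eta_N\to 0$ alone cannot suffice and some structural input from the algorithm is essential.

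The paper avoids the martingale route entirely by a two-scale decomposition. It introduces the full-depth partitions $\cD_j$ (the $2^j$ rectangles at depth $j$ of the master tree $\cM(f)$) and first proves $e_{1,\cD_j}(f)_p\to 0$: elements of $\cD_j$ whose history contains more than $j/2$ greedy splits have combined $\ell^p$-error at most $\rho^{j/2}\|f\|_{L^p}$ (by the weighted telescoping inequality $\sum_{T\in\cD_j} e_{1,T}(f)_p^p\,\rho^{-p\,g(T)} \le e_{1,\Omega}(f)_p^p$, where $g(T)$ counts greedy ancestors), while elements with at least $j/2$ safety splits have diameter at most $2^{1-j/4}$, so density of piecewise constants handles them. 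With this in hand, for fixed $j$ split $\cT_N$ into elements of level $\ge j$ (these refine a sub-partition of $\cD_j$, hence contribute at most $e_{1,\cD_j}(f)_p$) and the at most $2^j$ elements of level $<j$ (each of error $\le\eta_N$), giving $e_{1,\cT_N}(f)_p^p \le e_{1,\cD_j}(f)_p^p + 2^j\eta_N^p$. Sending $N\to\infty$ and then $j\to\infty$ completes the proof. This decomposition via the level partitions $\cD_j$ is the missing idea in your proposal.
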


\begin{proof}
Similar to the original refinement procedure,
the modified one
defines a infinite master tree $\cM:=\cM(f)$ with
root $\Omega$ which contains all elements
that can be generated at some stage of the
algorithm applied to $f$. This tree 
depends on $f$, and the partition $\cT_N$
produced by
the modified greedy refinement algorithm may be identified to
a finite subtree within $\cM(f)$.
We denote by $\cD_j:=\cD_j(f)$ the partition consisting
of the rectangles of area $2^{-j}$ in $\cM$, 
which are thus obtained by $j$ refinements of $\Omega$.
This partition also depends on $f$.

We first prove that $e_{1,\cD_j}(f)_p\to 0$ as $j\to \infty$.
For this purpose we split $\cD_j$ into two sets $\cD_j^g$ and $\cD_j^s$.
The first set $\cD_j^g$ consists of the element $T$
for which more than half of the splits that led from $\Omega$ to $T$
were of greedy type. 
Due to the fact that such splits reduce the 
local approximation error by a factor $\rho$ and that this error
is not increased by a safety split, it is easily cheched by an induction argument
that
$$
e_{1,\cD_j^g}(f)_p=\(\sum_{T\in \cD_j^g}e_{1,T}(f)_p^p\)^{1/p} \leq \rho^{j/2}e_{1,\Omega}(f)_p\leq \rho^{j/2}\|f\|_{L^p},
$$
which goes to $0$ as $j\to +\infty$. This result also holds when $p=\infty$.
The second set $\cD_j^s$ consists of the elements $T$
for which at least half of the splits that led from $\Omega$ to $T$
were safety split.  Since two safety splits reduce at least by $2$ the diameter
of $T$, we thus have
$$
\max_{T\in \cD_j^s} h_T \leq  2^{1-j/4},
$$
which goes to $0$ as $j\to +\infty$.
From classical properties of density of piecewise constant functions in $L^p$ spaces
and in the space of continuous functions,
it follows that
$$
e_{1,\cD_j^s}(f)_p  \to 0 \;\;{\rm as}\;\; j\to +\infty.
$$
This proves that 
$$
e_{1,\cD_j}(f)_p=\(e_{1,\cD_j^g}(f)_p^p+e_{1,\cD_j^s}(f)_p^p\)^{1/p}\to 0\;\;{\rm as}\;\; j\to +\infty,
$$
with the standard modification if $p=\infty$.

In order to prove that $e_{1,\cT_N}(f)_p$ also converges to $0$, we first observe that 
since $e_{1,\cD_j}(f)_p\to 0$, it follows that for all $\e>0$, there exists 
only a finite number of $T\in\cM(f)$ such that $e_{1,T}(f)_p\geq \e$. In turn, we find
that
$$
\e(N):=\max_{T\in\cT_N} e_{1,T}(f)_p\to 0\;\; {\rm as}\;\; N\to +\infty.
$$
For some $j>0$, we split $\cT_N$ into two sets $\cT_N^{j+}$
and $\cT_N^{j-}$ which consist of those $T\in\cT_N$
which are in $\cD_l$ for $l\geq j$ and $l<j$ respectively.
We thus have
$$
e_{1,\cT_N}(f)_p = \(e_{1, \cT_N^{j+}}(f)_p^p+e_{1, \cT_N^{j+}}(f)_p^p\)^{1/p}\\
\leq \(e_{1,\cD_j}(f)_p^p+2^j\e(N)^p\)^{1/p}.
$$
Since $e_{1,\cD_j}(f)_p \to 0$ as $j\to +\infty$ and 
$\e(N)\to 0$ as $N\to \infty$, and since $j$ is arbitrary,
this concludes the proof , with the standard modification if $p=\infty$.  \hfill $\Box$
\end{proof}

\subsection{Optimal convergence}

We now prove that
using the specific value $\rho=\frac 1 {\sqrt 2}$
the modified greedy refinement algorithm has optimal convergence properties
similar to \iref{CMlimsupsigmaNanisorect}
in the case where we measure the error in the $L^\infty$ norm.
Similar results can be obtained when the error is measured in $L^p$ with $p<\infty$,
at the price of more technicalities.

\begin{theorem}
\label{CMtheogreedyrect}
There exists a constant $C>0$ such that for any $f\in C^1(\Omega)$, the partition
$\cT_N$ produced by the modified greedy refinement algorithm with parameter $\rho=\frac 1 {\sqrt 2}$
satisfy the asymptotic convergence estimate
\be
\label{CMeqAsympt}
\limsup_{N\to +\infty} \, N^{1/2} e_{1,\cT_N}(f)_\infty \leq C \left\|\sqrt{|\partial_x f\ \partial_y f|}\right\|_{L^2}
\ee
\end{theorem}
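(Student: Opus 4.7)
The plan is to adapt the strategy of Theorem \ref{CMupperecttheo} to the partitions produced by the greedy algorithm, showing that the choice $\rho=1/\sqrt{2}$ simultaneously enforces error equidistribution and local shape-adaptation. By Theorem \ref{CMtheogreedyconv}, $\eta_N:=\max_{T\in\cT_N} e_{1,T}(f)_\infty\to 0$, so $e_{1,\cT_N}(f)_\infty=\eta_N$, and the theorem reduces to showing $N\eta_N^2\lesssim\|\sqrt{|\partial_x f\,\partial_y f|}\|_{L^2}^2+o(1)$. Two ingredients drive the argument: equidistribution (every leaf $T$ has $e_{1,T}(f)_\infty$ bounded below by a fixed multiple of $\eta_N$) and shape-adaptation (every leaf is well-adapted to $\nabla f$ at some interior point, giving $e_{1,T}(f)_\infty\lesssim\sqrt{|\partial_x f\,\partial_y f|(z_T)}\,|T|^{1/2}$). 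Combining these and summing over $\cT_N$, which tiles $\Omega$, produces a Riemann sum for $\int_\Omega|\partial_x f\,\partial_y f|$ and yields the bound.

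First I would analyze the decision rule on an affine function $q(x,y)=q_xx+q_yy$ with $q_xq_y\neq 0$. A direct computation on $T=I\times J$ with $a:=|q_x|\,|I|\geq b:=|q_y|\,|J|$ gives
$$
e_{1,T}(q)_\infty=\frac{a+b}{2},\quad e_{T,v}(q)_\infty=\frac{a/2+b}{2},\quad e_{T,h}(q)_\infty=\frac{a+b/2}{2},
$$
so the greedy condition $\min\{e_{T,v},e_{T,h}\}\leq\rho\,e_{1,T}(q)_\infty$ with $\rho=1/\sqrt{2}$ is equivalent to $a/b\geq\sqrt{2}$, i.e.\ $a_q(T)\geq 1/2$. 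A greedy split is thus invoked exactly when $T$ fails to be well-adapted to $q$, and Proposition \ref{CMadaptrectprop} then forces $a_q(T')\leq\max\{a_q(T)-1,1/2\}$; after a uniformly bounded number of greedy splits all descendants satisfy $a_q\leq 1/2$ and the algorithm switches to safety splits, which halve the longer physical side and act isotropically. On well-adapted rectangles one has $e_{1,T}(q)_\infty\asymp\sqrt{|q_xq_y|}\,|T|^{1/2}$, and each child inherits at least $1/\sqrt{2}$ of the parent's error.

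The transfer from affine to $C^1$ functions is by localization, in the spirit of Theorem \ref{CMupperecttheo}: I would cover $\Omega$ with squares $S$ of side $\delta=\delta(\e)$ on which $\nabla f$ oscillates by at most $\e$, freeze $\bq_S:=\nabla f(z_S)$ at the barycenter, and use the perturbation estimate $|e_{1,T}(f)_\infty-e_{1,T}(q_{z_S})_\infty|\leq C\e h_T$ for $T\subset S$. Once $\eta_N$ is small enough that all leaves and their parents have diameter $\leq\delta$, the affine analysis applies with $O(\e|T|^{1/2})$ perturbation, and the monotonicity of $N\mapsto\eta_N$ combined with the fact that $e_{1,P(T)}(f)_\infty\geq\eta_N$ for every $T\in\cT_N\setminus\cT_{N_0}$ (the parent had the largest local error at the moment it was split) closes the argument through the Riemann-sum identification described above; letting $\e\to 0$ removes the perturbation term and gives the announced $\limsup$ bound.

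The main obstacle is the degenerate set $\{|\partial_x f\,\partial_y f|=0\}$, where the optimal rectangle shape is singular and Proposition \ref{CMadaptrectprop} provides no control, so the greedy rule may produce arbitrarily elongated elements. As in the proof of Theorem \ref{CMupperecttheo}, the remedy is to work with a tamed shape function $K_{\infty,M}$ under a diameter cap $M$, prove the estimate under this cap, and let $M\to\infty$ after $N\to\infty$. A secondary difficulty is quantifying ``after enough refinements'': one must show that only $o(N)$ leaves remain either pre-asymptotic (with $a_{\bq_{z_T}}(T)>1/2$) or straddling the boundary between two covering squares $S$, which relies on the finiteness of $\{T\in\cM(f):e_{1,T}(f)_\infty\geq\e\}$ granted by Theorem \ref{CMtheogreedyconv}. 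These bookkeeping steps are the technical heart of the proof given in \cite{CMcm}.
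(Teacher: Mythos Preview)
Your target inequality $N\eta_N^2\lesssim\int_\Omega|\partial_x f\,\partial_y f|+o(1)$ and your affine computation (with $\rho=1/\sqrt 2$, a greedy split occurs iff $a_q(T)\geq 1/2$) are correct, and using $e_{1,P(T)}(f)_\infty\geq\eta_N$ for each leaf is the right substitute for equidistribution. But the route you propose---establishing well-adaptation $a_q(T)\leq C$ and then invoking $e_{1,P(T)}(f)_\infty\lesssim\sqrt{|\partial_x f\,\partial_y f|}\,|T|^{1/2}$---is not the paper's, and your plan for the degenerate region has a genuine gap. The tamed shape function $K_{\infty,M}$ from Theorem \ref{CMupperecttheo} cannot be imported here: in that theorem the partition is \emph{constructed} and one is free to impose a diameter cap, whereas here the partition is produced by the algorithm and obeys no such constraint. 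Near a point where, say, $\partial_x f=0$ but $\partial_y f\neq 0$, the quantity $a_{\bq_z}(T)$ is infinite for every rectangle, Proposition \ref{CMadaptrectprop} gives no control, and the shape-adaptation inequality you need is false on such leaves. (A smaller inaccuracy: once $a_q\leq 1/2$ the algorithm does not remain in safety mode---a safety split halves the longer \emph{physical} side, which can push $a_q$ up to about $3/2$, and a subsequent greedy split brings it back down; the two alternate.)

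The paper bypasses shape-adaptation entirely. It proves, for each leaf $T=I\times J\subset T_0\in\cT_M$, direct lower bounds on $|I|$ and $|J|$ in terms of $\eta_N$ and $\|\partial_x f\|_{L^\infty(T_0)},\|\partial_y f\|_{L^\infty(T_0)}$, by contradiction with the decision rule (Lemma \ref{CMlemmaRobust} and the three-case analysis in the proof): if a side were too short, some ancestor was split in a direction that the greedy/safety rules would actually have forbidden. Three regimes on $T_0$ are distinguished---both partials $\geq 10\delta$, both $\leq 10\delta$, or one of each---yielding respectively $\eta_N^2/|T|\leq C|T_0|^{-1}\int_{T_0}|\partial_x f\,\partial_y f|$, $\eta_N^2/|T|\leq C\delta^2$, and $\eta_N^2/|T|\leq C\delta\|\nabla f\|_{L^\infty}$. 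Summing over $\cT_N$ gives $N\eta_N^2\leq C\bigl(\int_\Omega|\partial_x f\,\partial_y f|+\delta\|\nabla f\|_{L^\infty}+\delta^2\bigr)$, and letting $\delta\to 0$ concludes. The degenerate regime is precisely Case 3, where the side-length bounds degrade to $|I|\geq\eta_N/(C\delta)$ rather than $\eta_N/(C|\partial_x f|)$; this is what produces the $O(\delta)$ term and replaces your appeal to $K_{\infty,M}$.
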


\noindent
The proof of this theorem requires a preliminary result.
Here and after, we use the $\ell^\infty$ norm on $\RR^2$ for
measuring the gradient: for $z=(x,y)\in\Omega$
$$
|\nabla f(z)|:=\max \{|\partial_x f(z)|,|\partial_y f(z)|\},
$$
and
$$
\|\nabla f\|_{L^\infty(T)}:= \sup_{z\in T}|\nabla f(z)|=\max\{\|\partial_x f\|_{L^\infty(T)},\|\partial_y f\|_{L^\infty(T)}\}.
$$
We recall that the local $L^\infty$-error on $T$ is given by
$$
e_{1,T}(f)_\infty= \frac 1 2 \left(\max_{z\in T} f(z) - \min_{z\in T} f(z) \right).
$$
For the sake of simplicity we define 
$$
e_T(f) := \max_{z\in T} f(z) - \min_{z\in T} f(z) = 2 e_{1,T}(f)_\infty,
$$
and 
$$
e_{T,h} f : =2 e_{T,h}(f)_\infty, \quad e_{T,v}(f) := 2 e_{T,v}(f)_\infty.
$$
We also recall from the proof of Theorem \ref{CMtheogreedyconv} that
$$
\e(N):=\max_{T\in\cT_N} e_T(f) \to 0 \;\; {\rm as}\;\; N\to +\infty.
$$
Finally we sometimes use the notation $x(z)$ and $y(z)$ to denote the coordinates 
of a point $z\in\RR^2$.

\begin{lemma}
\label{CMlemmaRobust}
Let $T_0=I_0\times J_0 \in\cT_M$ be a dyadic rectangle obtained 
at some stage $M$ of the refinement algorithm, and let $T=I\times J\in \cT_N$
be a dyadic rectangle obtained at some later stage $N>M$ and such
that $T\subset T_0$. We then have 
$$
|I|\geq \min\left \{|I_0|, \frac {\ve(N)} {4\|\nabla f \|_{L^\infty(T_0)}} \right \} \text{ and } 
|J|\geq \min\left \{|J_0|, \frac {\ve(N)} {4\|\nabla f \|_{L^\infty(T_0)}}\right \}.
$$
\end{lemma}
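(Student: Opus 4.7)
The plan is to establish both inequalities simultaneously by induction on $N \ge M$; I focus on the bound for $|I|$, as the argument for $|J|$ is strictly symmetric after exchanging the roles of the two coordinates. The base case $N = M$ is immediate, since then $T = T_0$ and $|I| = |I_0|$.

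For the inductive step, let $T \in \cT_N$ with $T \subseteq T_0$. If $T \in \cT_{N-1}$, the inductive hypothesis already yields the bound at stage $N-1$, and since $\varepsilon(N) \le \varepsilon(N-1)$ the right-hand side of the claimed estimate can only shrink, so the conclusion still holds. Otherwise $T$ has just been produced at stage $N$ by splitting its parent $P \in \cT_{N-1}$, and by the selection rule of the greedy algorithm $P$ was the element maximising the local error, so $e_P(f) = \varepsilon(N-1)$. If $P$ is split horizontally then $|I(T)| = |I(P)|$ and the inductive bound for $P$ transfers unchanged to $T$.

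The substantive case is therefore the vertical split on $P$, whose \emph{safety} subcase is already very clean. When the vertical split is a safety split, the rule forces $|I(P)| > |J(P)|$, and integrating $\nabla f$ along a rectilinear path joining any two points of $P$ yields the oscillation estimate
$$ e_P(f) \le \|\nabla f\|_{L^\infty(T_0)}\bigl(|I(P)| + |J(P)|\bigr) \le 2\|\nabla f\|_{L^\infty(T_0)}|I(P)| = 4\|\nabla f\|_{L^\infty(T_0)}|I(T)|. $$
Combining this with $\varepsilon(N) \le \varepsilon(N-1) = e_P(f)$ gives exactly $|I(T)| \ge \varepsilon(N)/\bigl(4\|\nabla f\|_{L^\infty(T_0)}\bigr)$, which is the required inequality.

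The main obstacle is the \emph{greedy} vertical subcase, in which $|J(P)|$ may exceed $|I(P)|$ and the simple oscillation bound above is no longer tight enough. Here I would exploit the defining greedy inequality $e_{P,v}(f) \le \rho\, e_P(f)$ with $\rho = 1/\sqrt 2$, which guarantees that both children of $P$ have local error at most $\rho\, e_P(f)$. My plan is to trace the ancestry chain of $T$ backwards, accumulating a factor $\rho$ at each greedy vertical ancestor against a factor $2$ in $|I|$ at each vertical step, and stop as soon as the chain meets either an ancestor that was split by a safety vertical rule (where the previous paragraph applies), an ancestor whose split was horizontal (already covered), or the root $T_0$ itself. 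The specific value $\rho = 1/\sqrt 2$ is exactly what makes the identity $\rho \cdot \sqrt 2 = 1$ balance the error reduction against the halving of $|I|$ across a vertical step, and tracking this bookkeeping carefully so as to preserve the multiplicative constant $4$ in the final estimate is the delicate part that the proof must settle.
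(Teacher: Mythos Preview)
Your argument is correct up through the horizontal and safety-vertical cases, but the greedy-vertical case is genuinely incomplete, and the accumulation strategy you sketch will not close the gap. The difficulty is that the quantity you must bound from below is $\varepsilon(N)=\max_{S\in\cT_N}e_S(f)$, a \emph{global} maximum over the whole partition, whereas the factor $\rho$ you accumulate controls only the local errors along the ancestry chain of $T$. Concretely, after $k$ consecutive greedy vertical splits from an ancestor $P_k$ down to $T$ you get $|I(T)|=2^{-k}|I(P_k)|$ and $e_{P_1}(f)\le\rho^{k-1}e_{P_k}(f)$, and the best you can extract is $\varepsilon(N)\le\varepsilon(N-1)=e_{P_1}(f)\le\rho^{k-1}e_{P_k}(f)$. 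Feeding this into whatever bound you have for $|I(P_k)|$ leaves you comparing $\rho^{k-1}$ against $2^{-k}$, i.e.\ a factor $(2\rho)^k=(\sqrt 2)^k$ that diverges; the identity $\rho\sqrt 2=1$ does not save you, because the exponents on $\rho$ and on $2$ do not match the way your sketch suggests.

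The paper avoids accumulation entirely by a one-step contradiction. Assume the bound on $|I|$ fails and let $T'$ be the \emph{last} ancestor of $T$ (in the chain from $T_0$) that was split vertically; then $|I'|=2|I(T)|<\varepsilon(N)/(2\|\nabla f\|_{L^\infty(T_0)})$. Any ancestor that was split satisfies $e_{T'}(f)\ge\varepsilon(N)$ since it was the maximiser at its own stage and $\varepsilon$ is nonincreasing. Your safety argument already rules out a safety split. For the greedy case, let $z_M,z_m$ be the points where $f$ attains its max and min on $T'$, and let $T''$ be the vertical child containing $z_M$. Since $T''$ has the full height $|J'|$ but half the width, there is a point $z_m'\in T''$ with $y(z_m')=y(z_m)$ and $|x(z_m')-x(z_m)|\le|I'|/2$, so
\[
e_{T',v}(f)\ \ge\ e_{T''}(f)\ \ge\ f(z_M)-f(z_m')\ \ge\ e_{T'}(f)-\tfrac{|I'|}{2}\|\partial_x f\|_{L^\infty(T')}\ >\ e_{T'}(f)-\tfrac{\varepsilon(N)}{4}\ \ge\ \tfrac34\,e_{T'}(f).
\]
Since $3/4>\rho=1/\sqrt2$, this contradicts the greedy criterion $e_{T',v}(f)\le\rho\,e_{T'}(f)$. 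The missing idea in your attempt is precisely this pointwise estimate: a vertical split of a rectangle that is already thin in the $x$-direction can reduce the oscillation by at most $\|\partial_x f\|_{L^\infty}|I'|/2$, and when $|I'|$ is small relative to $\varepsilon(N)$ this is not enough to qualify as greedy.
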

\proof
Since the coordinates $x$ and $y$ play symmetrical roles, it suffices to prove the first inequality.
We reason by contradiction. If the inequality does not hold, there exists 
a rectangle $T'=I'\times J'$ in the chain that led from $T_0$ to $T_1$
which is such that
$$
|I'| < \frac  {\ve(N)} {2\|\nabla f \|_{L^\infty(T_0)}},
$$
and such that $T'$ is split vertically by the algorithm. If this was a safety split, 
we would have that $|J'|\leq |I'|$
and therefore 
$$
e_{T'}(f) \leq (|I'|+|J'|) \|\nabla f \|_{L^\infty(T)} \leq 2|I'|\|\nabla f \|_{L^\infty(T)} < \ve(N),
$$ 
which is a contradiction, since all ancestors of $T$ should satisfy $e_{T'}(f)\geq \ve(N)$. 
Hence this split was necessarily a greedy split.

Let $z_m:={\rm Argmin}_{z\in T'} f(z)$ and $z_M:={\rm Argmax}_{z\in T'} f(z)$, and let $T''$ be the child of $T'$ (after the vertical split) containing $z_M$. Then $T''$ also contains a point $z'_m$ such that $|x(z'_m) - x(z_m)|\leq |I'|/2$ and $y(z'_m) = y(z_m)$. It follows that
$$
\begin{array}{ll}
e_{T',v}(f) &=e_{T''}(f) \\
& \geq f(z_M) - f(z'_m)\\
 & \geq f(z_M) - f(z_m)- \|\partial_x f\|_{L^\infty(T')} |I'|/2\\
 & \geq  e_{T'}(f) - \ve(N)/4\\ 
&  \geq \frac 3 4 e_{T'}(f) \\
& > \rho e_{T'}(f).
\end{array}
$$
The error was therefore insufficiently reduced which contradicts a greedy split.
\hfill $\Box$
\nl
\nl
{\bf Proof of Theorem \ref{CMtheogreedyrect}:}
We consider a small but fixed $\delta >0$, we define $h(\delta)$ as the maximal $h>0$ such that
$$
\forall z,z'\in \Omega,\ |z-z'|\leq 2h(\delta) \Rightarrow |\nabla f(z)-\nabla f(z')|\leq  \delta.
$$
For any rectangle $T=I\times J\subset \Omega$, we thus have 
\be
\label{CMeqhdelta}
\begin{array}{ll}
&e_{T}(f) \geq (\|\partial_x f\|_{L^\infty(T)}-\delta) \min\{h(\delta),|I|\}, \\
&e_{T}(f) \geq (\|\partial_y f\|_{L^\infty(T)}-\delta) \min\{h(\delta),|J|\}.
\end{array}
\ee
Let $\delta>0$ and $M=M(f,\delta)$ be the smallest value of $N$ such that $\e(N) < 9\delta h(\delta)$.
For all $N\geq M$, and therefore $\e(N) < 9\delta h(\delta)$, we consider the 
partition $\cT_N$ which is a refinement of $\cT_{M}$. For any rectangle $T_0=I_0\times J_0 \in \cT_M$,
we denote by $\cT_N(T_0)$ the set of rectangles of $\cT_N$ that are contained $T_0$.
We thus have
$$
\cT_N:=\cup_{T_0\in\cT_M}\cT_N(T_0),
$$
and $\cT_N(T_0)$ is a partition of $T_0$. We shall next bound by below
the side length of $T=I\times J$ contained in $\cT_N(T_0)$,
distinguishing different cases depending on the behaviour of $f$ on $T_0$.
\nl
\nl
{\it Case 1.} If $T_0\in\cT_M$ is such that $\|\nabla f\|_{L^\infty(T_0)} \leq 10 \delta$,
then a direct application of Lemma \ref{CMlemmaRobust} shows that for all $T=I\times J\in \cT_N(T_0)$ we have 
\be
\label{CMeqFlat}
|I|\geq \min\left \{|I_0|, \frac {\ve(N)} {40\delta}\right \} \text{ and } |J|\geq \min\left \{|J_0|, \frac {\ve(N)} {40\delta}\right \}
\ee
\nl
{\it Case 2.} If $T_0\in\cT_M$ is such that
$\|\partial_x f\|_{L^\infty(T_0)} \geq 10 \delta$ and $\|\partial_y f\|_{L^\infty(T_0)} \geq 10 \delta$,
we then claim that for all $T=I\times J\in \cT_N(T_0)$ we have
\be
\label{CMeqAffine}
|I| \geq \min\left\{|I_0|, \frac{\ve(N)} {20\|\partial_x f\|_{L^\infty(T_0)}}\right\} \text{ and } 
|J| \geq \min\left\{|J_0|, \frac{\ve(N)} {20\|\partial_x f\|_{L^\infty(T_0)}}\right\},
\ee
and that furthermore 
\be
|T_0| \ \|\partial_x f\|_{L^\infty(T_0)} \|\partial_y f\|_{L^\infty(T_0)} \leq \left (\frac {10}9\right )^2 \int_{R^*} |\partial_x f\ \partial_y f| dx dy.
\label{CMintegralstatement}
\ee
This last statement easily follows by the following observation: combining 
\iref{CMeqhdelta} with the fact that $\|\partial_x f\|_{L^\infty(T_0)} \geq 10 \delta$ and $\|\partial_y f\|_{L^\infty(T_0)} \geq 10 \delta$
and that $e_{T}(f)\leq \e(N) \leq 9\delta h(\delta)$, we find that for all $z\in T_0$
$$
|\partial_x f(z)|\geq \|\partial_x f\|_{L^\infty(T_0)} - \delta \geq \frac 9 {10}\|\partial_x f\|_{L^\infty(T_0)},
$$
and 
$$
|\partial_y f(z)|\geq \|\partial_y f\|_{L^\infty(T_0)} - \delta \geq \frac 9 {10}\|\partial_y f\|_{L^\infty(T_0)},
$$
Integrating over $T_0$ yields \iref{CMintegralstatement}.
Moreover for any rectangle $T\subset T_0$, we have
\be
\label{CMestimAffine}
\frac 9 {10} \leq \frac {e_{T}(f) }{\|\partial_x f\|_{L^\infty(T_0)}|I|+\|\partial_y f\|_{L^\infty(T_0)} |J|}\leq 1.
\ee
Clearly the two inequalities in \iref{CMeqAffine} are symmetrical, and it suffices to prove the first one. 
Similar to the proof of Lemma \ref{CMlemmaRobust}, we 
reason by contradiction, assuming that a rectangle $T'=I'\times J'$ with $|I'| \|\partial_x f\|_{L^\infty(T_0)}< \frac{\ve(N)}{10}$ 
was split vertically by the algorithm in the chain leading from $T_0$ to $T$. 
A simple computation using inequality \iref{CMestimAffine} shows that 
$$
\frac{e_{T',h}(f) }{e_{T'}(f)}\leq \frac{e_{T',h}(f)}{e_{T',v}(f)} \leq \frac 5 9 \times \frac{1+2\sigma}{1+\sigma/2} \ \text{ with }\ \sigma := \frac{\|\partial_x f\|_{L^\infty(T_0)} |I'|}{\|\partial_y f\|_{L^\infty(T_0)} |J'|}.
$$
In particular if $\sigma<0.2$ the algorithm performs a horizontal greedy split on $T'$, which contradicts our assumption.
Hence $\sigma\geq 0.2$, but this also leads to a contradiction since
$$
\ve(N) \leq  e_{T'}(f) \leq \|\partial_x f\|_{L^\infty(T_0)} |I'|+ \|\partial_y f\|_{L^\infty(T_0)} |J'| 
\leq (1+\sigma^{-1}) \frac {\e(N)} {10} < \ve(N)
$$
\nl
\nl
{\it Case 3.} If $T_0\in\cT_M$ be such that $\|\partial_x f\|_{L^\infty(T_0)} \leq 10 \delta$ and $\|\partial_y f\|_{L^\infty(T_0)} 
\geq 10 \delta$, we then claim that for all $T=I\times J\in \cT_N(T_0)$ we have
\be
\label{CMeq1d}
|I| \geq \min\left \{|I_0|, \frac{\ve(N)} {C\delta}\right \} \text{ and } |J| \geq \min\left \{|J_0|, \frac {\ve(N)} {4 \|\nabla f\|_{L^\infty}}\right \}, \text{ with } C=200,
\ee
with symmetrical result if $T_0$ is such that $\|\partial_x f\|_{L^\infty(T_0)} \geq 10 \delta$ and $\|\partial_y f\|_{L^\infty(T_0)} 
\leq 10 \delta$. The second part of \iref{CMeq1d} is a direct consequence of Lemma
\ref{CMlemmaRobust}, hence we focus on the first part.
Applyting the second inequality of \iref{CMeqhdelta} to $T=T_0$, we obtain
$$
9\delta h(\delta) > e_{T_0}(f) \geq (\|\partial_y f\|_{L^\infty}(T_0)-\delta)\min\{h(\delta),|J_0|\}
\geq 9\delta\min\{h(\delta),|J_0|\},
$$
from which we infer that $|J_0| \leq h(\delta)$. If $z_1,z_2\in T_0$ and $x(z_1) = x(z_2)$ we therefore have 
$|\partial_y f(z_1)| \geq |\partial_y f(z_2)|-\delta$. It follows that for any rectangle $T=I \times J\subset T_0$ we have
\be
\label{CMerror1D}
(\|\partial_y f\|_{L^\infty(T)}-\delta) |J| \leq e_{T}(f) \leq \|\partial_y f\|_{L^\infty(T)} |J|+ 10\delta |I|.
\ee
We then again reason by contradiction, assuming that a rectangle $T'=I'\times J'$ with $|I'| \leq \frac{2\ve(N)}{C\delta}$ was split vertically by the algorithm in the chain leading from $T_0$ to $T$. 
If $\|\partial_y f\|_{L^\infty(T')} \leq 10\delta $, then $\|\nabla f\|_{L^\infty(T')}\leq 10\delta$ and 
Lemma \ref{CMlemmaRobust} shows that $T'$ should not have been split vertically, which is a contradiction.
Otherwise $\|\partial_y f\|_{L^\infty(T')} -\delta \geq \frac 9{10} \|\partial_y f\|_{L^\infty(T')}$, and we obtain
\be
\label{CMestimDy}
(1-20/C)e_{T'}(f) \leq \|\partial_y f\|_{L^\infty(T')} |J'|\leq \frac {10} 9 e_{T'}(f).
\ee
We now consider the children $T'_v$ and $T'_h$ of $T'$
of maximal error after a horizontal and vertical split
respectively, and we inject \iref{CMestimDy} in \iref{CMerror1D}. 
It follows that
$$
\begin{array}{ll}
e_{T',h}(f) &=e_{T'_h}(f)\\
&  \leq  \|\partial_y f\|_{L^\infty(T')} |J'|/2+ 10\delta |I'| \\
&  \leq \frac 5 9 e_{T'}(f)+ 20\ve(N)/C \\
& \leq  (\frac 5 9+20/C)  e_{T'}(f)= \frac{59}{90} e_{T'}(f),
\end{array}
$$
and
$$
\begin{array}{ll}
e_{T',v}(f) & =e_{T'_h}(f)\\
 &\geq (\|\partial_y f\|_{L^\infty(T')}-\delta) |J| \\
&  \geq \frac 9 {10} \|\partial_y f\|_{L^\infty(T')} |J'|  \\
& \geq \frac 9 {10} (1-20/C) e_{T'}(f)= \frac {81}{100}e_{T'}(f).
\end{array}
$$
Therefore $e_{T',v}(f) > e_{T',h}(f)$
which is a contradiction, since our decision rule would then select a horizontal split.
\nl

We now choose $N$ large enough so that 
the minimum in \iref{CMeqFlat}, \iref{CMeqAffine} and \iref{CMeq1d} is
are always equal to the second term. For all  $T\in \cT_N(T_0)$, we respectively find that 
$$
\frac{\ve(N)^2} {|T|}\leq  C
\left\{\begin{array}{ccl}
\delta^2 & \text{ if } & \|\nabla f\|_{L^\infty(T_0)}\leq 10\delta\\
\frac 1 {|T_0|} \int_{T_0} |\partial_x f\ \partial_y f| & \text{ if } &   \|\partial_x f\|_{L^\infty(T_0)} \geq 10 \delta \text{ and } \|\partial_y f\|_{L^\infty(T_0)} \geq 10 \delta\\
 \delta \|\nabla f\|_{L^\infty} & \text{ if } & \|\partial_x f\|_{L^\infty(T_0)} \leq 10 \delta \text{ and } \|\partial_y f\|_{L^\infty(T_0)} \geq 10 \delta \text{ (or reversed).}
\end{array}\right.
$$
with $C= \max\{40^2,\ 20^2 (10/9)^2,\ 800\}=1600$. For $z\in \Omega$, we set
$\psi(z) := \frac 1 {|T|}$ where $T\in \cT_N$ such $z\in T$, and obtain
$$
N=\#(\cT_N)  = \int_{\Omega} \psi \leq C\ve(N)^{-2} \left(  \int_{\Omega} |\partial_x f\ \partial_y f| dx dy+ \delta \|\nabla f\|_{L^\infty}+\delta^2\right).
$$
Taking the limit as $\delta \to 0$, we obtain 
$$
\limsup_{N\to \infty} \, N^{\frac 1 2} \|f-f_N\|_{L^\infty}\leq 20\left \| \sqrt {|\partial_x f\partial_y f|}\right \|_{L^2},
$$
which concludes the proof. \hfill $\Box$

\begin{remark} 
The proof of the Theorem can be adapted 
to any choice of parameter $\rho\in ]\frac 1 2,1[$.
\end{remark}

\subsection{Refinement algorithms for piecewise polynomials on triangles}

As in \S 5, we work on a polygonal domain $\Omega\subset \RR^2$
and we consider piecewise polynomial approximation on 
anisotropic triangles. At a given stage of the refinement
algorithm, the triangle $T$ that maximizes $e_{m,T}(f)_p$
is split from one of its vertices $a_i\subset \{a_1,a_2,a_3\}$
towards the mid-point $b_i$ of the opposite edge $e_i$.
Here again, we may replace
$e_{m,T}(f)_p$ by the more computable quantity
$\|f-P_{m,T}f\|_p$ for selecting the triangle $T$
of largest local error. 

If $T$ is the triangle that is selected for
being split, we denote by
$(T_i',T_i'')$ the two children 
which are obtained when $T$ is split
from $a_i$ towards $b_i$.
The most natural decision rule is based on comparing
the three quantities
$$
e_{T,i}(f)_{p}:=\(e_{m,T_i'}(f)_p^p+e_{m,T_i''}(f)_p^p\)^{1/p},\;\; i=1,2,3.
$$
which represent the local approximation error on $T$
after the three splitting options, with the standard
modification when $p=\infty$. The decision rule
based on the $L^p$ error is therefore :
\nl
\nl
{\centerline {\it $T$ is split from $a_i$ towards $b_i$ for 
an $i$ that minimizes $e_{T,i}(f)_{p}$.}
}
\nl
A convergence analysis of this anisotropic greedy algorithm is
proposed in \cite{CMcm} in the case of piecewise affine functions
corresponding to $m=2$. Since it is by far more involved than
the convergence analysis presented in \S 6.1, \S 6.2 and \S 6.3 
for piecewise constants on rectangles, but possess several similar
features, we discuss without proofs the main available results
and we also illustrate their significance through numerical tests.

No convergence analysis is so far available for the
case of higher order piecewise polynomial $m>2$, beside
a general convergence theorem similar
to Theorem \ref{CMtheogreedyconv}. The algorithm
can be generalized to simplices in dimension
$d>2$. For instance, a $3$-d simplex
can be split into two simplices by a plane connecting 
one of its edges to the midpoint of the opposite edge, allowing
therefore between $6$ possibilities.  

As remarked in the end of \S 6.1, we may use a decision 
rule based on a local error measured in another norm than 
the $L^p$ norm for which we select the element $T$ of largest
local error. In \cite{CMcm}, we considered the ``$L^2$-projection'' decision rule
based on minimizing the quantity 
$$
e_{T,i}(f)_{2}:=\(\|f-P_{2,T_i'}(f)\|_{L^2(T_i')}^2+\|f-P_{2,T_i''}(f)\|_{L^2(T_i'')}^2\)^{1/2},
$$
as well as the ``$L^\infty$-interpolation'' decision rule
based on minimizing the quantity
$$
d_{T,i}(f)_{2}:=\|f-I_{2,T_i'}(f)\|_{L^\infty(T_i')}+\|f-I_{2,T_i''}(f)\|_{L^\infty(T_i'')},
$$
where $I_{2,T}$ denotes the local interpolation operator: $I_{2,T}(f)$ is the affine function
that is equal to $f$ at the vertices of $T$. Using either of these two decision
rules, it is possible to prove that the generated triangles
tend to adopt a well adapted shape.

In a similar way to the algorithm for piecewise constant approximation
on rectangles, we first discuss the behaviour of the algorithm
when $f$ is exactly a quadratic function $q$. Denoting by $\bq$ its
the homogeneous part of degree $2$, we have seen in \S 5.1
that when ${\rm det}(\bq)\neq 0$, the approximation error
on an optimally adapted triangle $T$ is given by
$$
e_{2,T}(q)_p=e_{2,T}(q)_p=|T|^{1/\tau}ÊK_{2,p}(\bq), \;\; \frac 1 \tau:=\frac 1 p+1.
$$
We can measure the adaptation of $T$ with respect to $\bq$ by the
quantity 
$$
\sigma_{\bq}(T)_p=\frac {e_{2,T}(\bq)_p}{|T|^{1/\tau}ÊK_{2,p}(\bq)},
$$
which is equal to $1$ for optimally adapted triangles and small
for ``well adapted'' triangles. It is easy to check that the functions
$(\bq,T)\mapsto \sigma_T(\bq)_p$ are equivalent for all $p$, similar to the 
shape functions $K_{2,p}$ as observed in \S 5.2.

The following theorem, which is a direct consequence
of the results in \cite{CMcm}, shows that
the decision rule tends to make ``most
triangles'' well adapted to $\bq$.

\begin{theorem}
\label{CMadaptriprop}
There exists constants $0<\theta,\mu <1$ and a constant $C_p$
that only depends on $p$ such that the following holds.
For any $\bq\in \HH_2$ such that ${\rm det}(\bq)\neq 0$ and any triangle $T$,
after $j$ refinement levels of $T$ according to the
decision rule, a proportion $1-\theta^j$ of the
$2^j$ generated triangles $T'$ satisfies 
\be
\sigma_{\bq}(T')_p\leq \min\{ \mu^{j}\sigma_{\bq}(T)_p,C_p \}.
\ee
As a consequence, for $j>j(\bq,T)=-\frac {\log C_p- \log(\sigma_{\bq}(T)_p)}{\log \mu}$
one has
\be
\sigma_{\bq}(T')_p \leq C_p,
\ee
for a proportion $1-\theta^j$ of the
$2^j$ generated triangles $T'$.
\end{theorem}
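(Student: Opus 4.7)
The plan is to exploit the affine invariance of the problem to reduce to canonical quadratic forms, then to establish a one-step contraction lemma for the bisection dynamics and iterate it along the refinement tree.

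\medskip

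First, I would use the transformation law \eqref{CMinvarphik} together with the affine covariance of midpoint bisection to reduce to the case $\bq=\bar\bq$, where $\bar\bq=x^2+y^2$ when $\det(\bq)>0$ and $\bar\bq=x^2-y^2$ when $\det(\bq)<0$. Indeed, if $\vp$ is an invertible linear map with $\bq\circ\vp=\pm\bar\bq$, then for any triangle $T$ one has $\sigma_{\bq}(T)_p=\sigma_{\bar\bq}(\vp^{-1}(T))_p$ by combining \eqref{CMinvartranstri}, \eqref{CMinvarhomtri.} and \eqref{CMinvarphik}; moreover, the three bisection candidates of $T$ are mapped by $\vp^{-1}$ to the three bisection candidates of $\vp^{-1}(T)$, and since the decision rule is a comparison between $L^p$-errors it is preserved under $\vp^{-1}$. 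By the Remark at the end of \S 5.2, the shape functions $K_{2,p}$ are equivalent in $p$, so it suffices to work with a convenient value such as $p=\infty$, the equivalence constant being absorbed into $C_p$.

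\medskip

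Second, the core of the argument is a quantitative \emph{one-step lemma} for each canonical $\bar\bq$: there exist $\mu\in(0,1)$ and $\theta\in(0,\tfrac12)$ such that for every triangle $T$, the two children $T'_1,T'_2$ produced by the bisection selected by the decision rule satisfy
$$
\sigma_{\bar\bq}(T'_k)_p \leq \max\{\mu\,\sigma_{\bar\bq}(T)_p,\,C_p\},\qquad k=1,2,
$$
except on a controlled set of triangle shapes whose descendants I would track as ``bad''. The proof rests on a compactness argument modulo the subgroup $G_{\bar\bq}$ of area-preserving linear maps preserving $\bar\bq$: the quotient of the space of triangles by $G_{\bar\bq}$ (and by translation/dilation) is finite dimensional, $\sigma_{\bar\bq}$ is a proper continuous function on it, and the decision rule strictly decreases $\sigma_{\bar\bq}$ away from the closed well-adapted region $\{\sigma_{\bar\bq}\leq C_p\}$. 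A compactness/continuity argument then yields uniform constants $\mu$ and $\theta$.

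\medskip

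Third, I would iterate the lemma along the refinement tree. Let $B_j$ be the set of generation-$j$ descendants of $T$ for which $\sigma_{\bar\bq}(T')_p>\min\{\mu^j\sigma_{\bar\bq}(T)_p,\,C_p\}$. A branching-process bookkeeping using the one-step lemma gives $\#B_j\leq (2\theta)^j$, so after relabelling $\theta\leftarrow 2\theta<1$ the proportion $\#B_j/2^j$ is bounded by $\theta^j$. The second assertion follows because $\mu^j\sigma_{\bar\bq}(T)_p\leq C_p$ as soon as $j\geq j(\bq,T)$ as defined in the statement, so the minimum in the main estimate is then attained by $C_p$.

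\medskip

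The main obstacle is the one-step lemma of Step 2. Even after reduction to canonical form, the moduli space of triangles modulo $G_{\bar\bq}$ is two-dimensional, and proving that the minimum over the three bisection options yields a strict geometric decrease of $\sigma_{\bar\bq}$ requires a careful case analysis of how each of the three children's errors depends on the triangle's shape. The hyperbolic case $\bar\bq=x^2-y^2$ is the most delicate, because $G_{\bar\bq}$ contains the one-parameter group of hyperbolic rotations, so optimally adapted triangles form a non-compact orbit and one must rule out the possibility that the algorithm drifts indefinitely along this family while selecting seemingly good bisections.
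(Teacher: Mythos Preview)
The paper does not prove this theorem: it is stated as ``a direct consequence of the results in \cite{CMcm}'' and no argument is given in the present text. So there is no proof here to compare against; what can be said is whether your outline is a plausible route to the result and how it relates to the strategy one expects from \cite{CMcm}.

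Your reduction in Step~1 is correct and is indeed the natural first move: midpoint bisection is affinely covariant, the decision rule compares $L^p$-errors and is therefore preserved by any invertible affine map, and \eqref{CMinvarphik} gives the invariance of $\sigma_{\bq}$. Reducing to $\bar\bq\in\{x^2+y^2,\,x^2-y^2\}$ is exactly what the discussion in \S5.2 suggests.

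The substantive content is your Step~2, and here your sketch is honest but incomplete. The compactness argument you propose --- quotient by the stabiliser $G_{\bar\bq}$, properness of $\sigma_{\bar\bq}$ on the quotient, strict decrease of the selected bisection away from the well-adapted region --- is the right shape, and in the elliptic case it goes through cleanly since $G_{\bar\bq}=SO(2)$ is compact. In the hyperbolic case you correctly flag the difficulty: $G_{\bar\bq}$ contains a one-parameter boost subgroup, the set of optimally adapted triangles is a non-compact orbit, and one must show that the dynamics cannot drift along this orbit without contracting. Your proposal does not indicate \emph{how} this is ruled out; this is precisely the technical work carried out in \cite{CMcm}, and it requires more than a soft compactness statement --- one needs an explicit analysis of the three bisection errors as functions of the triangle shape to exhibit a Lyapunov-type quantity that the decision rule decreases. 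Without that analysis, the ``strict decrease away from $\{\sigma\le C_p\}$'' claim is an assertion rather than an argument.

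Your Step~3 is fine once Step~2 is in hand, though the bookkeeping should be stated more carefully: the one-step lemma should assert that for every $T$, \emph{at least one} child satisfies the contracted bound (this is what yields $\#B_j\le 2^{j}\theta^{j}$ with $\theta\le\tfrac12$ after renaming), or more generally that the number of bad children is controlled. As written, your ``except on a controlled set of triangle shapes'' is too vague to see that the branching estimate follows.

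In summary: your outline matches the expected architecture of the proof in \cite{CMcm}, but the decisive lemma is only asserted, and the hyperbolic case you single out is exactly where the real work lies.
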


This result should be compared to Proposition
\ref{CMadaptrectprop} in the case of rectangles.
Here it is not possible to show that {\it all} triangles 
become well adapted to $q$, but a proportion 
that tends to $1$ does. It is quite remarkable that
with only three splitting options, the greedy algorithm
manages to drive most of the triangles to 
a near optimal shape. We illustrate this
fact on Figure \ref{CMquadformadaptanis},
in the case of the quadratic form $\bq(x,y):=x^2+100 y^2$,
and an initial triangle $T$ which is equilateral for the
euclidean metric and therefore not well adapted to $\bq$.
Triangles such that $\sigma_{\bq}(T')_2\leq C_2$ 
are displayed in white, 
others in grey. We observe the growth of the proportion
of well adapted triangles as the refinement level
increases.

\begin{figure}[htbp]
\centerline{
\includegraphics[width=3.7cm,height=3.7cm]{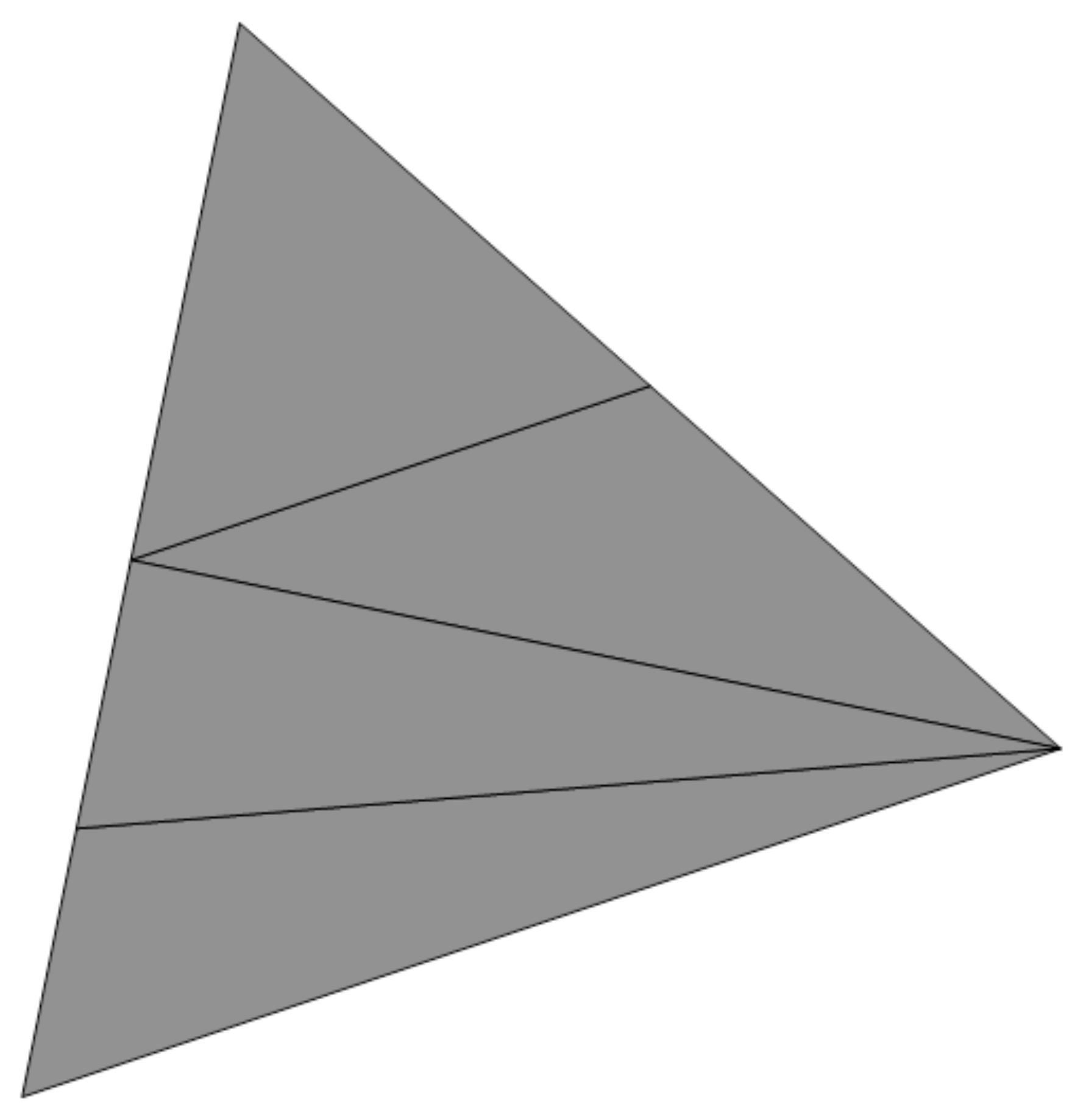}
\includegraphics[width=3.7cm,height=3.7cm]{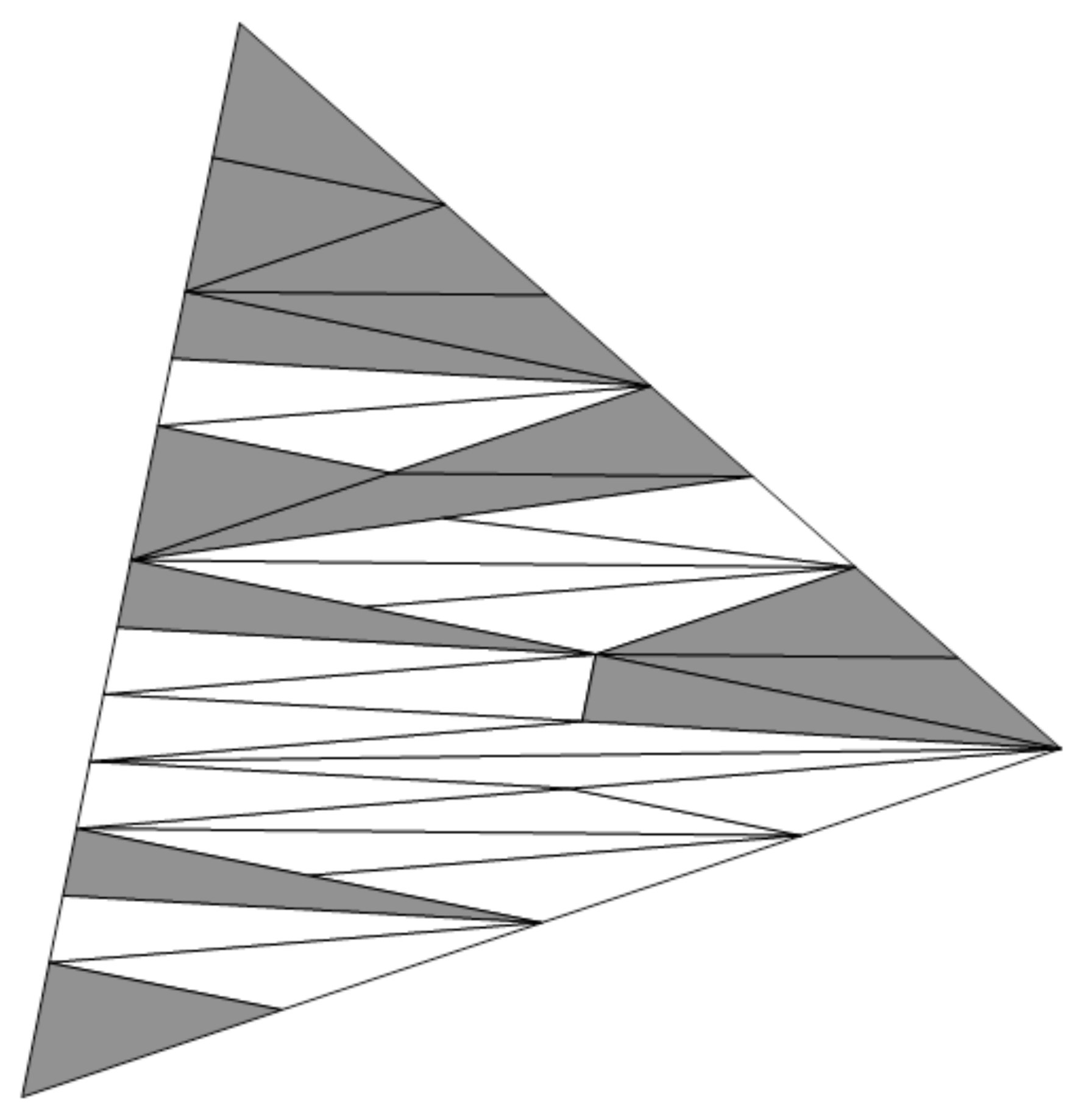}
\includegraphics[width=3.7cm,height=3.7cm]{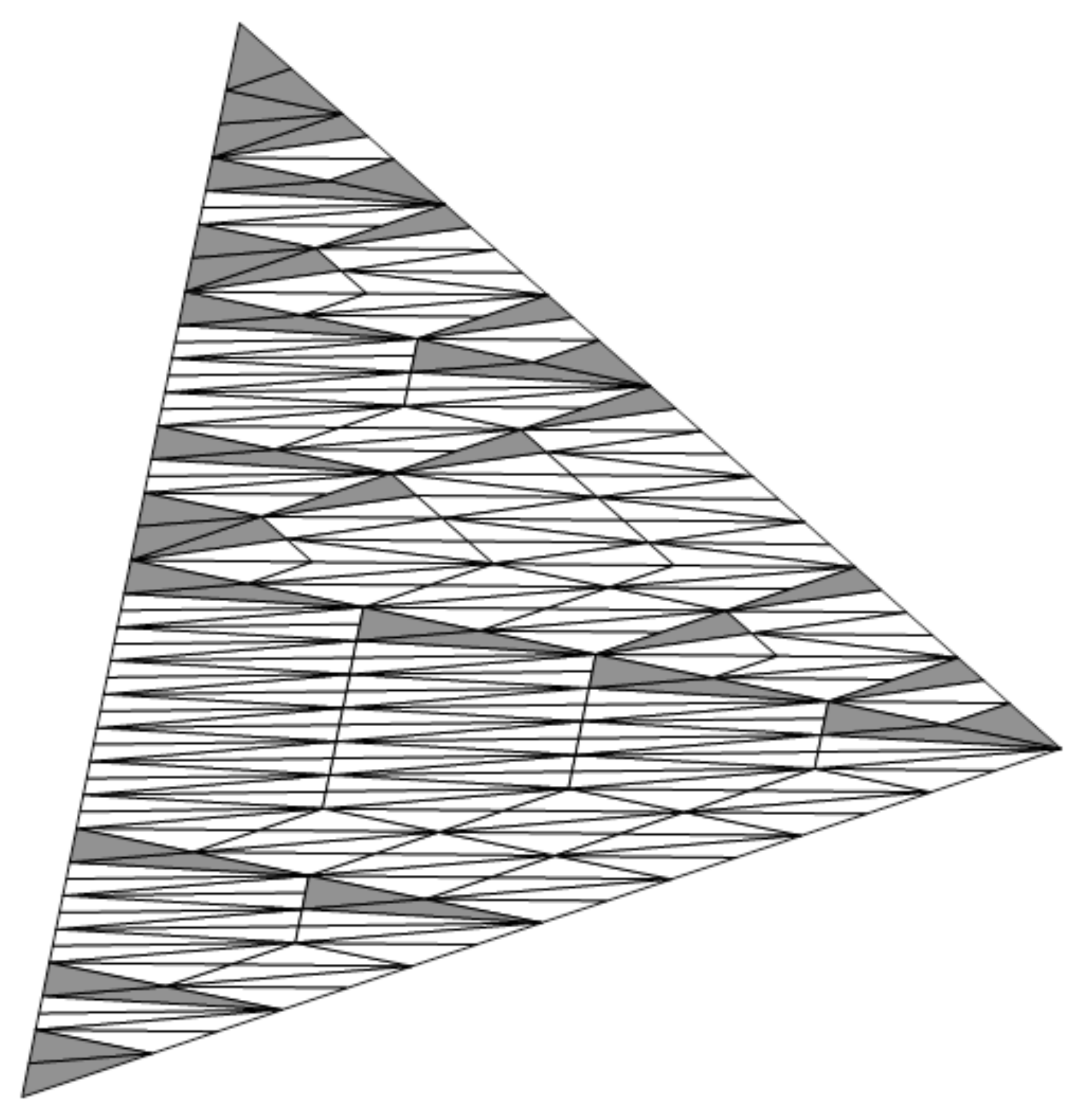}
}
\caption{Greedy refinement for $\bq(x,y):=x^2+100 y^2$: $j=2$ (left), $j=5$ (center), $j=8$ (right).}
\label{CMquadformadaptanis}
\end{figure}

From an intuitive point of view,
we expect that when we apply the anisotropic greedy refinement
algorithm to an arbitrary function $f\in C^2(\Omega)$, 
the triangles tend to adopt a locally
well adapted shape, provided that the 
algorithm reaches a stage where 
$f$ is sufficiently close to an quadratic
function on each triangle. As in the case
of the greedy refinement algorithm for rectangles,
this may not always be the case. It is however
possible to prove that this property holds
in the case of strictly convex or concave functions,
using the ``$L^\infty$-interpolation'' decision rule.
This allows to prove in such a case that the approximation
produced by the anisotropic
greedy algorithm satisfies an optimal convergence
estimate in accordance with Theorem \ref{CMuppersimplextheo}. 
These results from
\cite{CMcm} can be summarized as follows.

\begin{theorem}
\label{CMtheogreedydiam}
If $f$ is a $C^2$ function such that $d^2f(x)\geq \alpha I$ or $d^2f(x)\leq -\alpha I$, 
for all $x\in \Omega$ and some $\alpha>0$, then the triangulation
generated by the anisotropic greedy refinement algorithm (with
the $L^\infty$-interpolation decision rule) satisfies
\be
\lim_{N\to +\infty}\max_{T\in\cT_N} h_T =0.
\label{CMdiamtozero}
\ee
Moreover, there exists a constant $C>0$ such that for any such $f$, the approximation
produced by the anisotropic greedy refinement algorithm 
satisfies the asymptotic convergence estimate
\be
\label{CMgreedtrisympt}
\limsup_{N\to +\infty} \, N e_{2,\cT_N}(f)_p \leq C \left\|\sqrt{|{\rm det}(d^2f)|}\right\|_{L^\tau},\;\; \frac  1 \tau:=\frac 1 p+1.
\ee
\end{theorem}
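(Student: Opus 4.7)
The strategy is to adapt the architecture of the proof of Theorem~\ref{CMtheogreedyrect} to the triangular setting, with the strict definiteness of $d^2f$ playing the role that the safety split plays in the rectangle algorithm. The proof splits into two tasks: establishing the uniform diameter decay~\iref{CMdiamtozero}, which is the new ingredient compared to the rectangle case, and then using this together with Theorem~\ref{CMadaptriprop} to derive the asymptotic error estimate~\iref{CMgreedtrisympt}. As a preliminary I would show that the partitions satisfy the general convergence $e_{2,\cT_N}(f)_p\to 0$ by an argument analogous to Theorem~\ref{CMtheogreedyconv}, which in particular yields $\e(N):=\max_{T\in\cT_N}e_{2,T}(f)_p\to 0$.

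For~\iref{CMdiamtozero}, the hypothesis $d^2f\geq\alpha I$ (or $\leq -\alpha I$) implies via Taylor expansion that $|\det\bq_{z}|\geq \alpha^2/4$ everywhere, with $\bq_z:=\tfrac12 d^2f(z)$, so the shape function $K_{2,p}(\bq_{z})=\kappa_p\sqrt{|\det\bq_{z}|}$ is uniformly bounded below. Combined with the local version of the lower estimate from Theorem~\ref{CMlowerisotheotri}, this gives $e_{2,T}(f)_p\geq c\,|T|^{1/\tau}+o(|T|^{1/\tau})$ once $h_T$ is small, so the vanishing of $\e(N)$ forces $|T|\to 0$ for every $T\in\cT_N$. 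What remains is to rule out the persistence of thin triangles (with $|T|\to 0$ but $h_T\not\to 0$), and this is where the $L^\infty$-interpolation decision rule enters essentially: a case analysis of the three bisection options, in the spirit of Lemma~\ref{CMlemmaRobust}, should show that whenever a triangle becomes too elongated relative to the local metric induced by $d^2f$, the interpolation error is strictly reduced by bisecting the longest edge. This combined with the lower bound $\|f-I_{2,T}f\|_{L^\infty(T)}\geq c\alpha h_T^2$, coming from restricting $f$ to the longest edge and using its uniform one-dimensional convexity, forces the longest edge to be bisected in finite time, yielding uniform decay of $h_T$.

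Once~\iref{CMdiamtozero} holds and diameters are uniformly small, I would Taylor-expand $f$ on each triangle at some $z_T\in T$, apply Theorem~\ref{CMadaptriprop} to see that all but a fraction $\theta^j$ of the $j$-th generation descendants of any element satisfy $\sigma_{\bq_{z_T}}(T)_p\leq C_p$, and conclude that for such well-adapted triangles
\[
e_{2,T}(f)_p \leq C_p|T|^{1/\tau}K_{2,p}(\bq_{z_T})+o(|T|^{1/\tau})=\kappa_p C_p|T|^{1/\tau}\sqrt{|\det\bq_{z_T}|}+o(|T|^{1/\tau}).
\]
Summing $\tau$-th powers over $\cT_N$, exploiting the approximate error equidistribution enforced by the greedy choice of the element with largest $e_{2,T}(f)_p$, and applying H\"older's inequality exactly as in the proofs of Theorems~\ref{CMupperecttheo} and~\ref{CMuppersimplextheo} yields~\iref{CMgreedtrisympt} with a constant of order $\kappa_p C_p$.

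The hard part is undeniably~\iref{CMdiamtozero}. The absence of a safety split means that geometric control of aspect ratios must be extracted entirely from the analytic hypothesis on $d^2f$ together with the properties of the $L^\infty$-interpolation decision rule; this is a subtle interaction between the global algorithm, the local Hessian, and the geometry of edge bisection, and has no direct analogue in the rectangle setting where the safety split handled this issue by fiat. A secondary difficulty is that the ``bad'' proportion $\theta^j$ in Theorem~\ref{CMadaptriprop} is defined relative to descendants of a fixed ancestor, so assembling it into a globally small contribution requires a careful bookkeeping by generation analogous to the $\cD_j^g/\cD_j^s$ splitting in the proof of Theorem~\ref{CMtheogreedyconv}.
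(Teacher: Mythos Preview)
The paper does not actually prove this theorem: it only states the result and cites \cite{CMcm} for the proof, together with a brief heuristic discussion (that for strictly convex or concave $f$ the $L^\infty$-interpolation decision rule can be shown to drive the triangles toward well-adapted shapes, after which the optimal estimate follows as in Theorem~\ref{CMuppersimplextheo}). So there is no in-paper proof to compare against; I can only assess your sketch against that heuristic and against internal consistency.

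Your overall architecture---diameter decay first, then local quadratic approximation plus Theorem~\ref{CMadaptriprop}, then the H\"older/equidistribution argument---matches the paper's narrative. However, your ``preliminary'' step is miswired. You propose to obtain $e_{2,\cT_N}(f)_p\to 0$ (hence $\e(N)\to 0$) ``by an argument analogous to Theorem~\ref{CMtheogreedyconv}'', but that proof hinges entirely on the safety split: without it the $\cD_j^g/\cD_j^s$ dichotomy collapses and there is no mechanism forcing either error decay or diameter decay. The paper's counterexample just before Theorem~\ref{CMtheogreedyconv} makes exactly this point. In the present setting the roles are reversed: the convexity lower bound $e_{2,T}(f)_\infty\geq c\alpha h_T^2$ (which you state correctly) shows that $\e(N)\to 0$ and $\max_T h_T\to 0$ are \emph{equivalent}, so you cannot obtain one as a cheap preliminary to the other. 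The substantive work is precisely \iref{CMdiamtozero}, and it must be attacked directly using the interaction between the $L^\infty$-interpolation decision rule and the definiteness of $d^2f$---as you yourself acknowledge in your closing paragraph. Your proposed mechanism for this (the decision rule forces bisection of the longest edge once the triangle is too elongated in the metric of $d^2f$) is the right idea and is what the paper alludes to, but it is the heart of the argument, not a lemma sitting on top of a separately established convergence result.

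A second small point: your use of ``the local version of the lower estimate from Theorem~\ref{CMlowerisotheotri}'' to get $|T|\to 0$ is circular as written, since that local estimate already presupposes $h_T$ small. Once you have \iref{CMdiamtozero} directly, the area decay is immediate and this detour is unnecessary.
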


For a non-convex function, we are not ensured that the 
diameter of the elements tends to $0$
as $N\to \infty$, and similar to the greedy algorithm 
for rectangles, it is possible to produce examples
of smooth functions $f$ for which
the approximation produced by
the anisotropic greedy refinement algorithm fails to
converge towards $f$. A natural way to modify
the algorithm in order to circumvent this problem
is to impose a type of splitting that tend to 
diminish the diameter, such as longest edge or newest vertex bisection, in
the case where the refinement suggested by the original decision
rule does not sufficiently reduce the local error. This means
that we modify as follow the decision rule: 
\nl
\nl
{\it Case 1:} if $\min\{e_{T,1}(f)_p,e_{T,2}(f)_p,e_{T,3}(f)_p\}\leq \rho e_{2,T}(f)_p$,
then split $T$  from $a_i$ towards $b_i$ for 
an $i$ that minimizes $e_{T,i}(f)_{p}$. We call this a {\it greedy split}.
\nl
\nl
{\it Case 2:} if $\min\{e_{T,1}(f)_p,e_{T,2}(f)_p,e_{T,3}(f)_p\}> \rho e_{2,T}(f)_p$,
then split $T$ from the most recently generated vertex or towards its longest
edge in the euclidean metric. We call this a {\it safety split}.
\nl

As in modified greedy algorithm for rectangles, $\rho$ is a parameter chosen in $]0,1[$
that should not be chosen too
small in order to avoid that all splits are of safety type which would then
lead to isotropic triangulations.
It was proved in \cite{CMcdhm} that
the approximation produced by this
modified algorithm does converge towards $f$
for any $f\in L^p(\Omega)$. The following result
also holds for the
generalization of this algorithm to higher degree 
piecewise polynomials.

\begin{theorem}
\label{CMtheogreedytriconv}
For any $f\in L^p(\Omega)$ or in $C(\Omega)$
in the case $p=\infty$, the approximations produced by the modified
anisotropic  greedy refinement 
algorithm with parameter $\rho\in ]0,1[$
satisfies
\be
\label{CMgreedyconvptri}
\limsup_{N\to +\infty} e_{2,\cT_N}(f)_p=0.
\ee
\end{theorem}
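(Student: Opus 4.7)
The plan is to adapt closely the strategy of the analogous convergence result Theorem \ref{CMtheogreedyconv} for rectangles. I would let $\cM(f)$ denote the infinite master tree whose root is the initial triangulation $\cD_0$ and whose nodes are all triangles that can be produced at some stage of the modified algorithm applied to $f$; every finite partition $\cT_N$ identifies to a finite subtree of $\cM(f)$. Denoting by $\cD_j=\cD_j(f)$ the subpartition of $\Omega$ given by the triangles in $\cM(f)$ obtained after exactly $j$ uniform refinements of the root, the first goal is to establish $e_{2,\cD_j}(f)_p\to 0$ as $j\to +\infty$, and the second is to transfer this conclusion to the sequence $\cT_N$.

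To treat $\cD_j$, I would decompose it as $\cD_j=\cD_j^g\cup \cD_j^s$, where $\cD_j^g$ gathers the triangles for which strictly more than half of the $j$ splits leading to them were greedy, and $\cD_j^s$ the remainder. Using that a greedy split contracts the local $L^p$-error by the factor $\rho<1$ while the quantity $\sum_T e_{2,T}(f)_p^p$ is monotone non-increasing under refinement (which follows directly from the definition of the best $L^p$ polynomial approximation on $T$), an induction on $j$ yields
\be
e_{2,\cD_j^g}(f)_p = \bigl(\sum_{T\in \cD_j^g} e_{2,T}(f)_p^p\bigr)^{1/p} \leq \rho^{j/2}\|f\|_{L^p},
\ee
with the standard modification when $p=\infty$, so this contribution vanishes geometrically.

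For the triangles in $\cD_j^s$, the crucial geometric ingredient is that the safety rule, being either newest-vertex bisection or longest-edge bisection, enjoys a uniform diameter-contraction property: there exist constants $K\in \NN$ and $\theta\in(0,1)$, depending only on $\cD_0$, such that $K$ consecutive safety splits of any triangle $T'$ produce descendants of diameter at most $\theta h_{T'}$. Since each $T\in \cD_j^s$ has undergone at least $j/2$ safety splits, we will get $\max_{T\in \cD_j^s} h_T\leq C\theta^{j/(2K)}\to 0$ as $j\to +\infty$, and classical density of piecewise affine functions on partitions of vanishing mesh-size in $L^p(\Omega)$ (resp.\ in $C(\Omega)$ when $p=\infty$) will give $e_{2,\cD_j^s}(f)_p\to 0$, whence $e_{2,\cD_j}(f)_p\to 0$.

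Finally, as in the proof of Theorem \ref{CMtheogreedyconv}, I would use the monotonicity of $e_{2,T}(f)_p$ along branches of $\cM(f)$ together with $e_{2,\cD_j}(f)_p\to 0$ to conclude that only finitely many nodes of $\cM(f)$ satisfy $e_{2,T}(f)_p\geq \e$ for any given $\e>0$; hence $\e(N):=\max_{T\in\cT_N} e_{2,T}(f)_p\to 0$ as $N\to +\infty$. Splitting $\cT_N$ for fixed $j$ into the elements $\cT_N^{j+}$ of depth $\geq j$ (each contained in some element of $\cD_j$, so the subadditivity property above gives $e_{2,\cT_N^{j+}}(f)_p\leq e_{2,\cD_j}(f)_p$) and the at most $\#(\cD_0)\, 2^j$ elements $\cT_N^{j-}$ of depth $<j$, I obtain
\be
e_{2,\cT_N}(f)_p\leq \bigl(e_{2,\cD_j}(f)_p^p + \#(\cD_0)\, 2^j\, \e(N)^p\bigr)^{1/p},
\ee
with the obvious modification for $p=\infty$. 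Letting $N\to +\infty$ at fixed $j$ and then $j\to +\infty$ concludes. The main obstacle is the geometric lemma of step three: one must verify that the prescribed safety rule uniformly contracts diameters after finitely many iterations, a fact which for newest-vertex bisection rests on the finite similarity-class argument in the spirit of Binev--Dahmen--DeVore and for longest-edge bisection on the analysis of Rivara, and which dictates the particular form of safety splits allowed in the modified algorithm.
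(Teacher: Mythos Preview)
The paper does not actually prove this theorem; it is stated with a reference to \cite{CMcdhm}. Your plan is exactly the natural adaptation of the rectangle argument (Theorem~\ref{CMtheogreedyconv}), and presumably coincides with the approach of that reference. All steps except one carry over verbatim.

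The gap is in your geometric lemma. You state that $K$ \emph{consecutive} safety splits of any triangle $T'$ contract the diameter by a factor $\theta<1$, and then use this to conclude $h_T\le C\theta^{j/(2K)}$ for every $T\in\cD_j^s$, which has merely undergone $\ge j/2$ safety splits that are in general \emph{not consecutive}---greedy bisections are interspersed. This inference does not follow as written: the greedy bisections change the shape of the intermediate triangles, so the Rivara and Binev--Dahmen--DeVore estimates (which concern \emph{pure} longest-edge or \emph{pure} newest-vertex sequences) do not directly apply to the mixed sequence. In the rectangle case the paper's claim ``two safety splits reduce the diameter by~$2$'' happens to hold even with greedy splits in between, but already there it requires a short argument the paper omits; for triangles the analogue is more delicate.

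What is actually needed is the stronger statement: for any sequence of $j$ vertex-to-midpoint bisections of which at least $j/2$ are of safety type, the resulting diameter is at most $C\mu^{j}$ for some fixed $\mu<1$. For longest-edge safety this can be obtained from the two facts (i) any vertex-to-midpoint bisection never increases the diameter, and (ii) a longest-edge bisection bounds the children's diameter by the parent's second-longest side; one then tracks how quickly the second-longest side can approach the longest under greedy bisections, using the constraint that at most $j/2$ of the bisections are greedy. This works, but it is not an immediate corollary of the references you cite, and for newest-vertex safety the similarity-class argument is genuinely disrupted by greedy splits, so a separate argument is needed. You correctly flag this lemma as the main obstacle; the point is that its formulation must already accommodate the interspersed greedy bisections.
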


Similar to Theorem \ref{CMtheogreedyrect}, we may expect that the modified
anisotropic greedy refinement algorithm satisfies optimal convergence
estimates for all $C^2$ function, but this is an open question at the present
stage.
\nl
\nl
{\bf Conjecture.} {\it There exists a constant $C>0$ and $\rho^*\in ]0,1[$
such that for any $f\in C^2$, the approximation 
produced by the modified anisotropic greedy refinement algorithm 
with parameter $\rho\in ]\rho^*,1[$ satisfies the asymptotic convergence estimate
{\rm \iref{CMgreedtrisympt}}.}
\nl
\nl
We illustrate the performance of the anisotropic greedy refinement algorithm
algorithm for a function $f$ which has a sharp transition along a curved edge. Specifically
we consider
$$
f(x,y)=f_\delta(x,y) := g_\delta(\sqrt{x^2+y^2}),
$$ 
where $g_\delta$
is defined by
$g_\delta(r)= \frac{5-r^2} 4$ for $0\leq r\leq 1$, 
$g_\delta(1+\delta+r)=-\frac{5-(1-r)^2} 4$ for $r\geq 0$, $g_{\delta}$ is a polynomial of degree $5$ on $[1,1+\delta]$
which is determined by imposing that
$g_\delta$ is globally $C^2$.
The parameter $\delta$ therefore measures the sharpness of the transition. We
apply the anisotropic refinement algorithm based on splitting the
triangle that maximizes the local $L^2$-error
and we therefore measure the global error in $L^2$.

Figure \ref{CMfigtrianis} displays the triangulation $\cT_{10000}$ obtained after $10000$ steps of
the algorithm for $\delta=0.2$. In particular, triangles $T$ such that $\sigma_\bq(T)_2\leq C_2$ 
- where $\bq$ is the quadratic form associated with $d^2f$ measured at the barycenter of $T$ - 
are displayed in white, others in grey. As expected, most triangles are of the
first type therefore well adapted to $f$. We also display
on this figure the adaptive isotropic triangulation produced by the greedy tree algorithm
based on newest vertex bisection for the same number of triangles.

\begin{figure}[htbp]
\centerline{
\includegraphics[width=4cm,height=4cm]{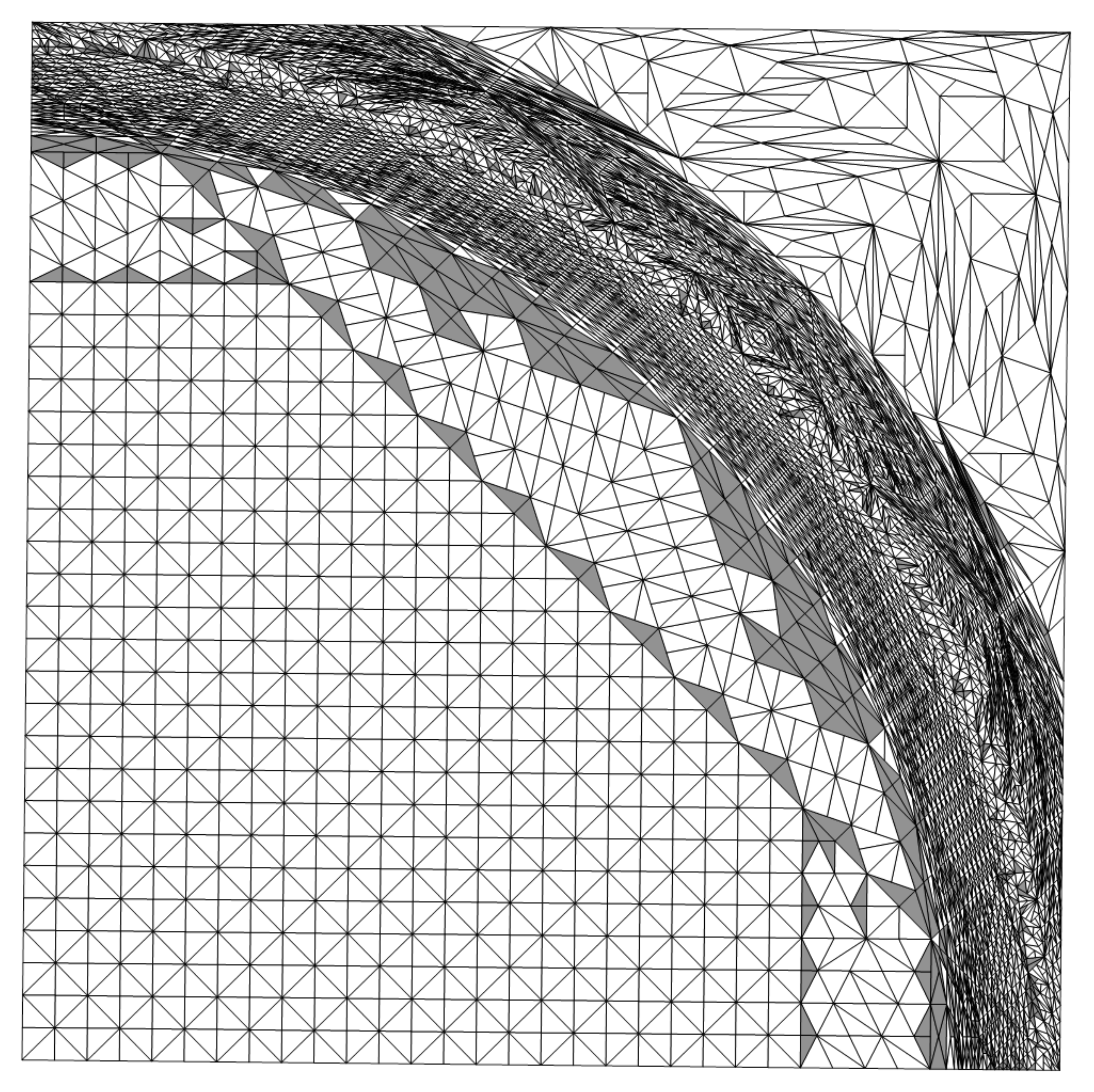}
\includegraphics[width=3.9cm,height=3.9cm]{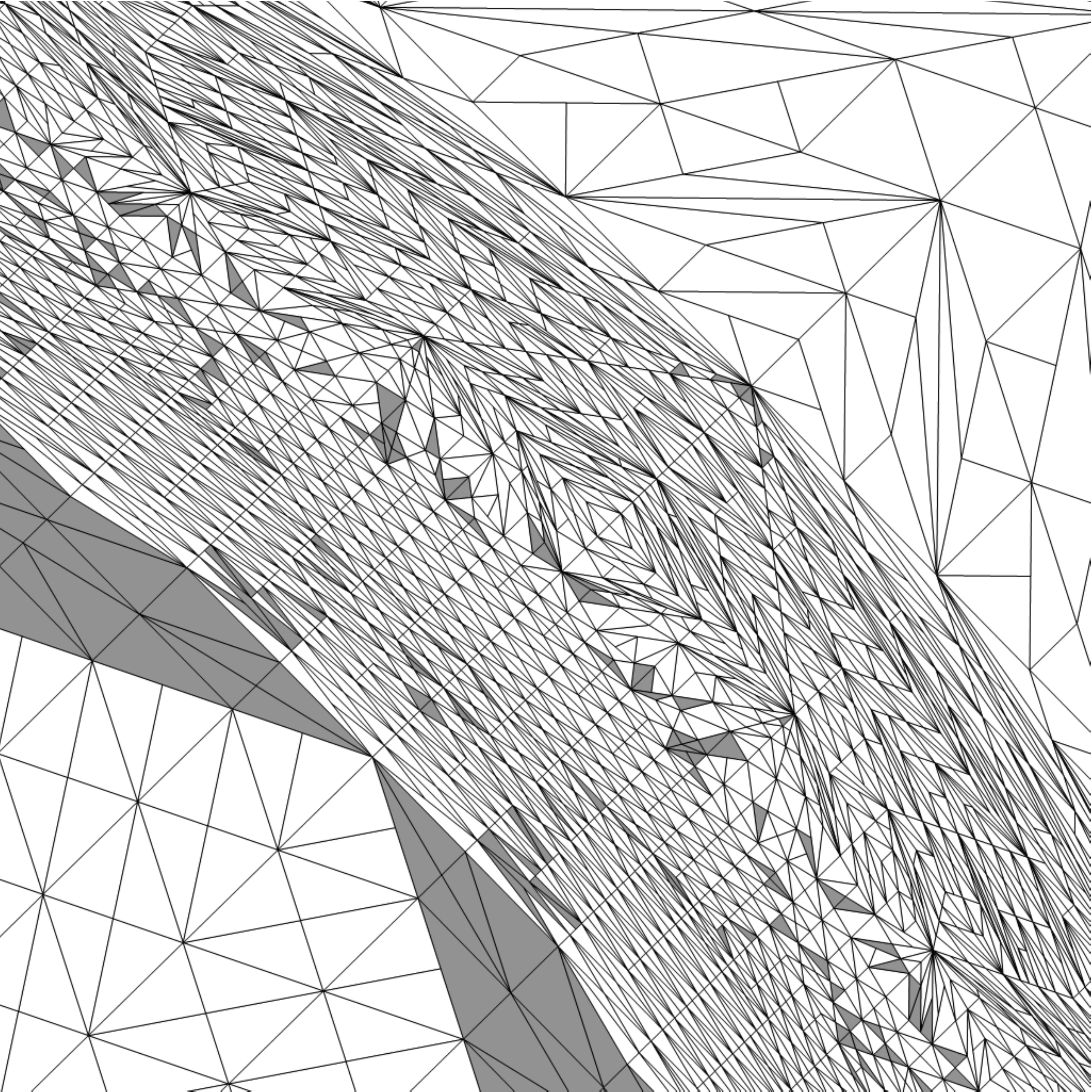}
\includegraphics[width=4cm,height=4cm]{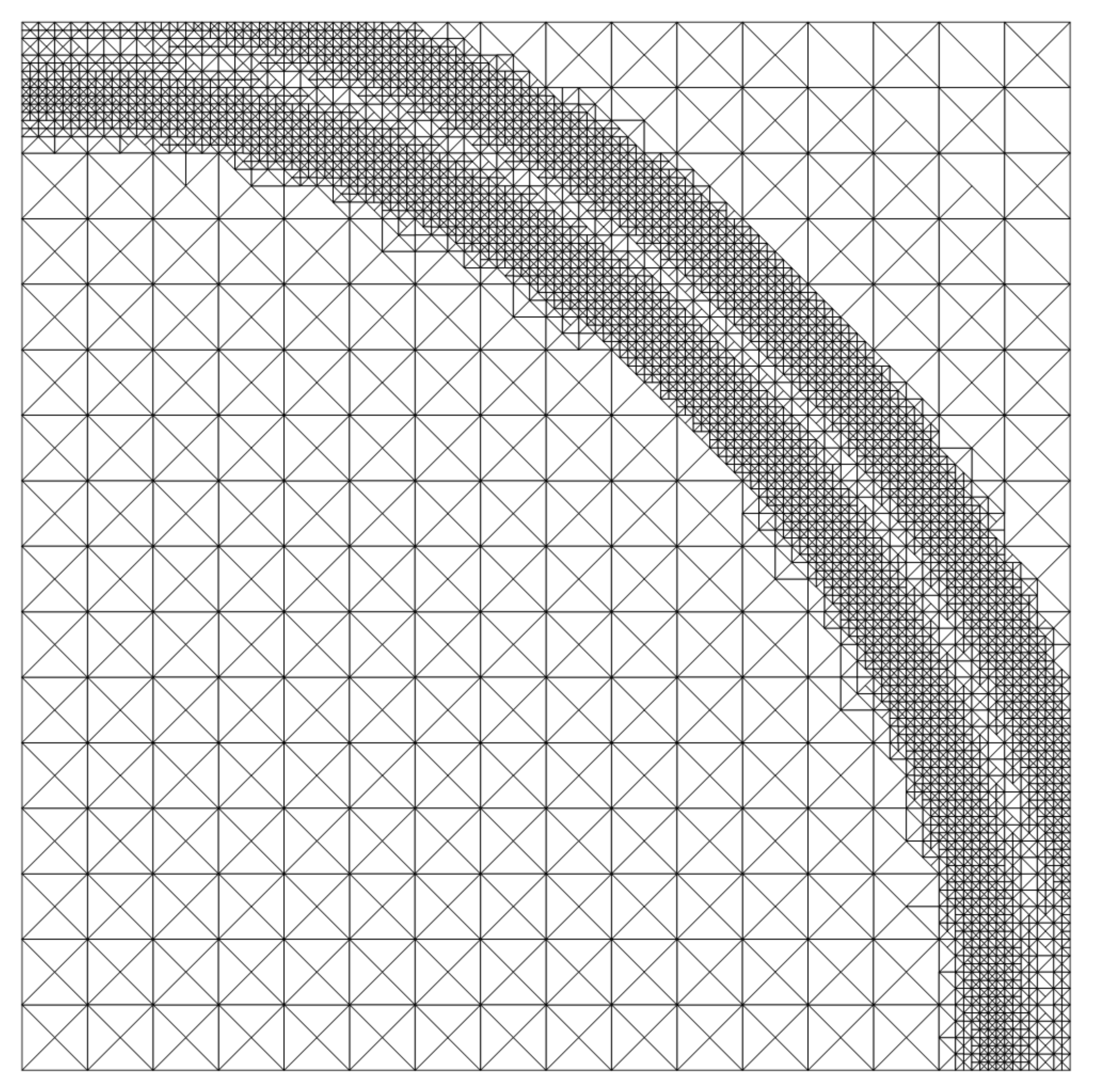}
}
\caption{The anisotropic triangulation $\cT_{10000}$ (left),  detail (center), isotropic triangulation (right).}
\label{CMfigtrianis}
\end{figure}

Since $f$ is a $C^2$ function, approximations by uniform, adaptive 
isotropic and adaptive anisotropic triangulations all yield the convergence rate $\cO(N^{-1})$.
However the constant 
$$
C:=\limsup_{N\to +\infty} Ne_{2,\cT_N}(f)_2,
$$ 
strongly differs depending on the algorithm
and on the sharpness of the transition.
We denote by $C_U$, $C_I$ and $C_A$ the empirical constants
(estimated by $N \|f-f_N\|_2$ for $N=8192$)
in the uniform, adaptive isotropic and adaptive anisotropic case respectively, and by
$U(f):=\|d^2f\|_{L^{2}}$,
$I(f):=\|d^2f\|_{L^{2/3}}$ and
$A(f):=\|\sqrt{|{\rm det}(d^2f)|}\|_{L^{2/3}}$
the theoretical constants suggested by the
convergence estimates. We observe on Figure \ref{CMfigtricomp}.
that $C_U$
and $C_I$ grow in a similar way as $U(f)$ and $I(f)$
as $\delta\to 0$ (a detailed computation shows that $U(f)\approx 10.37  \delta^{-3/2}$
and $I(f)\approx 14.01 \delta^{-1/2}$).
In contrast $C_A$ and $A(f)$ remain uniformly bounded, a fact
which is in accordance with Theorem \ref{CMthreg}
and reflects the superiority of anisotropic triangulations
as the layer becomes thinner and $f_\delta$ tends to 
a cartoon function.

\begin{figure}[htbp]
$$
\begin{array}{c|c|c|c|c|c|c|}
\delta & U(f)  & I(f) & A(f) & C_U & C_I   & C_A
\\
\hline
0.2	& 103  & 27 & 6.75 & 7.87	& 1.78 &  0.74\\
0.1 & 602 	& 60 & 8.50 & 23.7	& 2.98 & 0.92\\
0.05& 1705 & 82 & 8.48 & 65.5	&	4.13 & 0.92\\
0.02& 3670 & 105 & 8.47 & 200	&	6.60 & 0.92
\end{array}
$$
\caption{Comparison between theoretical and empirical convergence constants for uniform, adaptive isotropic
and anisotropic refinements, and for different values of $\delta$.}
\label{CMfigtricomp}
\end{figure}

We finally apply the anisotropic refinement algorithm 
to the numerical image of Figure \ref{CMfigimage}
based on the discretized $L^2$ error and using $N=2000$ triangles.
We observe on Figure \ref{CMfigimageanis} that 
the ringing artefacts produced by the 
isotropic greedy refinement algorithm near the
edges are strongly reduced. This is due to the
fact that the anisotropic greedy refinement algorithm
generates long and thin triangles aligned with the edges.
We also observe that the quality is slightly improved
when using the modified algorithm.
Let us mention that a different approach to the approximation
of image by adaptive anisotropic triangulations was proposed in \cite{CMddfi}.
This approach is based on a {\it thinning} algorithm, which starts
from a fine triangulation and iteratively coarsens it by point removal.
The use of adaptive adaptive anisotropic partitions has also strong
similarities with thresholding methods based on
representations which have more directional
selectivity than wavelet decompositions \cite{CMacddm,CMcd,CMd,CMlm}.
It is not known so far if these methods satisfy 
asymptotic error estimates of the same form as \iref{CMgreedtrisympt}.

\begin{figure}[htbp]
\centerline{
\includegraphics[width=5cm,height=5cm]{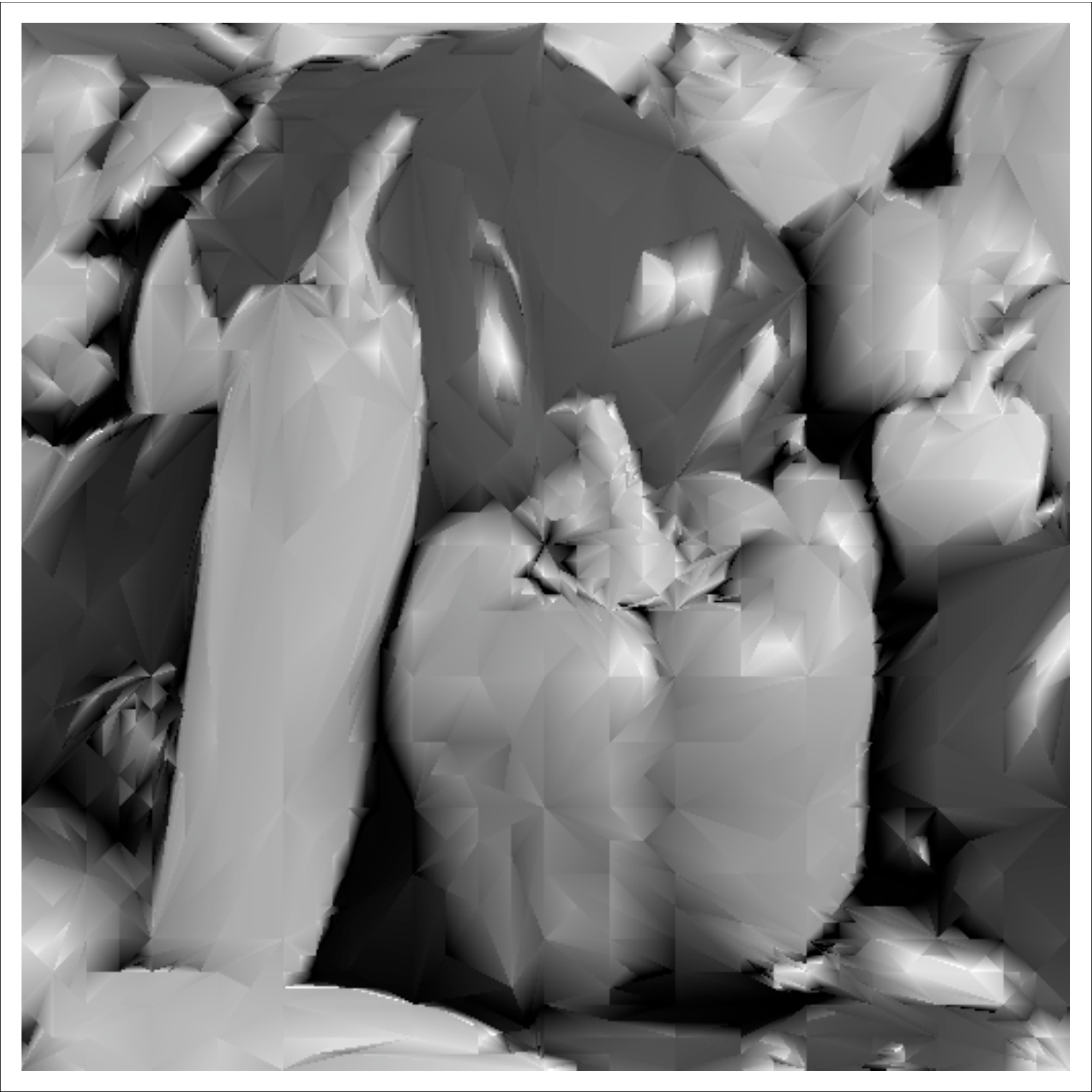}
\hspace{1cm}
\includegraphics[width=5cm,height=5cm]{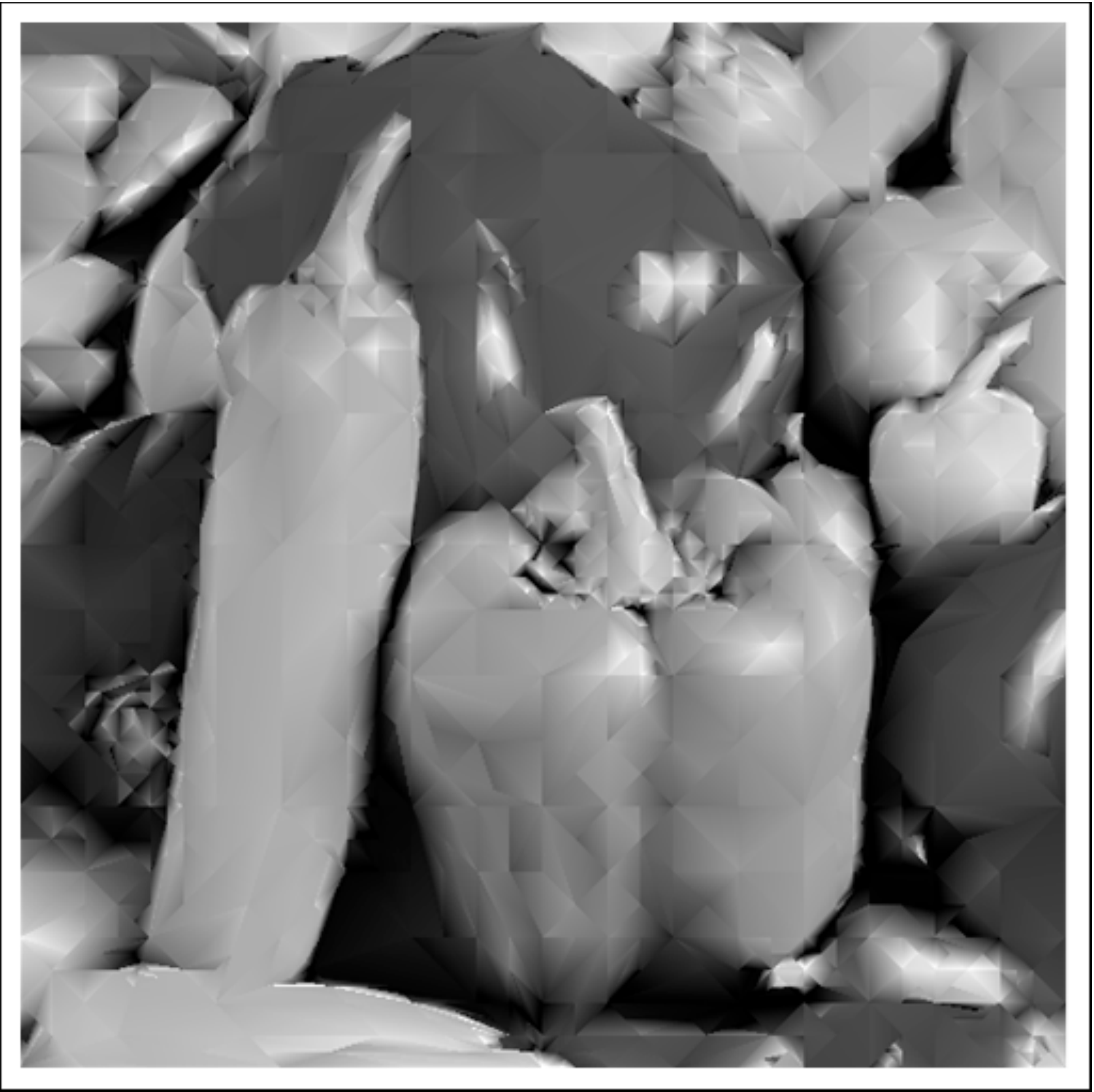}
}
\caption{Approximation by $2000$ anisotropic triangles obtained by the greedy (left) and modified (right) algorithm.}
\label{CMfigimageanis}
\end{figure}

%
%
%
\begin {thebibliography} {99}

\bibitem{CMaf} F. Alauzet and P.J. Frey, {\it Anisotropic mesh adaptation for CFD computations}, Comput. Methods Appl. Mech. Engrg. 194, 5068-5082, 2005.
 
\bibitem{CMalp} B. Alpert, {\it A class of bases in $L^2$ for the sparse representation
of integral operators}, SIAM J. Math. Anal. 24, 246-262, 1993.

\bibitem{CMap} T. Apel, {\it Anisotropic finite elements: Local estimates and applications}, Advances
in Numerical Mathematics, Teubner, Stuttgart, 1999.

\bibitem{CMacddm} F. Arandiga, A. Cohen, R. Donat, N. Dyn and B. Matei, 
{\it Approximation of piecewise smooth images by edge-adapted
techniques}, ACHA 24, 225--250, 2008.

\bibitem{CMbbls} V. Babenko, Y. Babenko, A. Ligun and A. Shumeiko,
{\it On Asymptotical
Behavior of the Optimal Linear Spline Interpolation Error of $C^2$
Functions}, East J. Approx. 12(1), 71--101, 2006.

\bibitem{CMb} Yuliya Babenko,  {\it Asymptotically Optimal Triangulations and Exact Asymptotics for the Optimal $L^2$-Error for Linear Spline Interpolation of $C^2$ Functions}, submitted. 

\bibitem{CMbd} P. Binev and 
R. DeVore, {\it Fast Computation in Adaptive Tree Approximation}, 
Numerische Mathematik 97, 193-217, 2004.

\bibitem{CMbdd} P. Binev, W. Dahmen and R. DeVore,
{\it Adaptive Finite Element Methods with Convergence Rates}, 
Numerische Mathematik 97, 219--268, 2004.

\bibitem{CMbois} J-D. Boissonnat, C. Wormser and M. Yvinec.
{\it Locally uniform anisotropic meshing.}
To appear at the next Symposium on Computational Geometry, june 2008 (SOCG 2008) 

\bibitem{CMbfgls} H. Borouchaki, P.J. Frey,  P.L. George, P. Laug and E. Saltel,
{\it Mesh generation and mesh adaptivity: theory, techniques},  in Encyclopedia of computational mechanics, E. Stein, R. de Borst and T.J.R. Hughes ed., John Wiley \& Sons Ltd., 2004.

\bibitem{CMpeyre} Sebastien Bougleux and Gabriel Peyr\'e and Laurent D. Cohen. {\it Anisotropic Geodesics for Perceptual Grouping and Domain Meshing.} Proc. tenth European Conference on Computer Vision (ECCV'08), Marseille, France, October 12-18, 2008..

\bibitem{CMbfos} L. Breiman, J.H. Friedman,  R.A. Olshen and C.J. Stone,
{\it Classification and regression trees}, Wadsworth international, Belmont, CA, 1984. 

\bibitem{CMcd} E. Candes and D. L. Donoho, {\it Curvelets and curvilinear integrals},
J. Approx. Theory. 113, 59--90, 2000.

\bibitem{CMcao1} W. Cao. {\it An interpolation error estimate on anisotropic meshes in $\R^n$ and optimal metrics for mesh refinement.} SIAM J. Numer. Anal. 45 no. 6, 2368--2391, 2007.

\bibitem{CMcao2} W. Cao, {\it Anisotropic measure of third order derivatives and the quadratic interpolation error on triangular elements}, to appear in SIAM J. Sci. Comput., 2007. 

\bibitem{CMcao3} W. Cao. {\it An interpolation error estimate in $\R^2$ based on the anisotropic measures of higher order derivatives}. Math. Comp. 77, 265-286, 2008.

\bibitem{CMcsx} L. Chen, P. Sun and J. Xu, {\it Optimal anisotropic meshes for
minimizing interpolation error in $L^p$-norm}, Math. of Comp. 76, 179--204, 2007.

\bibitem{CMco} A. Cohen, {\it Numerical analysis of wavelet methods}, Elsevier, 2003.

\bibitem{CMcddd} A. Cohen, W. Dahmen, I. Daubechies and R. DeVore,
{\it Tree-structured approximation and optimal encoding},
App. Comp. Harm. Anal. 11, 192--226, 2001.

\bibitem{CMcdhm} A. Cohen, N. Dyn, F. Hecht and J.-M. Mirebeau,
{\it Adaptive multiresolution analysis based on anisotropic triangulations},
preprint, Laboratoire J.-L.Lions, submitted 2008.

\bibitem{CMcm} A. Cohen, J.-M. Mirebeau, {\it Greedy bisection generates optimally adapted triangulations},
preprint, Laboratoire J.-L.Lions, submitted 2008.

\bibitem{CMcmcart} A. Cohen, J.-M. Mirebeau, {\it Anisotropic smoothness classes: 
from finite element approximation to image processing},
preprint, Laboratoire J.-L.Lions, submitted 2009.

\bibitem{CMdahpart} W. Dahmen, {\it Adaptive approximation by multivariate smooth splines},
J. Approx. Theory 36, 119--140, 1982.

\bibitem{CMddp} S. Dahmen, S. Dekel and P. Petrushev,
{\it Two-level splits of anisotropic Besov spaces}, to appear in
Constructive Approximation, 2009.

\bibitem{CMdeckel} S. Dekel, D. Leviatan and M. Sharir, 
{\it On Bivariate Smoothness Spaces
Associated with
Nonlinear Approximation}, Constructive Approximation 20, 625--646, 2004

\bibitem{CMdl} S. Dekel and D. Leviathan,
{\it Adaptive multivariate approximation using binary space partitions and geometric wavelets},
SIAM Journal on Numerical Analysis 43, 707--732, 2005.

\bibitem{CMddfi} L. Demaret, N. Dyn, M. Floater and A. Iske,
{\it Adaptive thinning for terrain modelling and image compression},
in Advances in Multiresolution for Geometric Modelling,
N.A. Dodgson, M.S. Floater, and M.A. Sabin (eds.),
Springer-Verlag, Heidelberg, 321-340, 2005. 

\bibitem{CMde} R. DeVore, {\it Nonlinear approximation},
Acta Numerica 51-150, 1998

\bibitem{CMdel} R. DeVore and G. Lorentz, {\it Constructive Approximation},
Springer, 1993.

\bibitem{CMdy} R. DeVore and X.M. Yu, {\it Degree of adaptive approximation},
Math. of Comp. 55, 625--635.

\bibitem{CMd} D. Donoho, {\it Wedgelets: nearly minimax estimation of edges},
Ann. Statist. 27(3), 859--897, 1999.

\bibitem{CMdo} D. Donoho, {\it CART and best basis: a connexion},
Ann. Statist. 25(5), 1870--1911, 1997.

\bibitem{CMdor} W. D\"orfler, {\it A convergent adaptive algorithm for Poisson's equation},
SIAM J. Numer. Anal. 33, 1106--1124, 1996.

\bibitem{CMfg} P.J. Frey and P.L. George, {\it Mesh generation. Application to finite elements}, 
Second edition. ISTE, London; John Wiley \& Sons, Inc., Hoboken, NJ, 2008.

\bibitem {CMka} J.-P. Kahane, {\it Teoria constructiva de functiones}, Course notes,
University of Buenos Aires, 1961.

\bibitem {CMkp}ÊB. Karaivanov
and P. Petrushev, {\it Nonlinear piecewise polynomial approximation beyond Besov spaces}, 
Appl. Comput. Harmon. Anal. 15(3), 177-223, 2003.

\bibitem{CMlm} E. Le Pennec and S. Mallat, {\it Bandelet image approximation
and compression},  
SIAM Journal of Multiscale Modeling. and Simulation, 4(3), 992--1039, 2005.

\bibitem{CMmallat} S. Mallat 
{\it A Wavelet Tour of Signal Processing - The sparse way}, 3rd Revised
edition, Academic Press, 2008.

\bibitem{CMm} J.-M. Mirebeau, {\it Optimally adapted finite element meshes},
preprint, Laboratoire J.-L.Lions, submitted 2009.

\bibitem{CMmns} P. Morin, R. Nochetto and K. Siebert,
{\it Convergence of adaptive finite element methods},
SIAM Review 44, 631--658, 2002.

\bibitem{CMri} M.C. Rivara, {\it New longest-edge algorithms for the renement and/or improvement of unstructured triangulations}, 
Int. J. Num. Methods 40, 3313--3324, 1997.

\bibitem{CMstein} E. M. Stein, {\it Singular integral and differentiability properties
of functions}, Princeton University Press, 1970.

\bibitem{CMste} R. Stevenson, {\it An optimal adaptive finite element method}, 
SIAM J. Numer. Anal., 42(5), 2188--2217, 2005. 

\bibitem{CMve} R. Verfurth, {\it A Review of A Posteriori Error Estimation and Adaptive Mesh-Refinement Techniques},
Wiley-Teubner, 1996.

\bibitem{CMbamg} The 2-d anisotropic mesh generator BAMG: 
http://www.freefem.org/ff++/  (included in the FreeFem++ software)

\end{thebibliography}

\end{document}